\documentclass[12pt]{amsart}
\usepackage[english]{babel}
\usepackage{amsthm}
\usepackage{inputenc}
\usepackage{amssymb}
\usepackage{color}
\usepackage{cite}
\tolerance=5000 \topmargin -1cm \oddsidemargin=0,5cm
\evensidemargin=-0,2cm \textwidth 15.6cm \textheight 24cm
\linespread{1.0}
\newtheorem{thm}{Theorem}[section]
\newtheorem{cor}[thm]{Corollary}
\newtheorem{lm}[thm]{Lemma}

\newtheorem{defn}[thm]{Definition}
\newtheorem{rem}[thm]{Remark}
\newtheorem{exam}[thm]{Example}
\numberwithin{equation}{section}

\begin{document}

\author[Ayupov]{Shavkat Ayupov}
\email{sh$_{-}$ayupov@mail.ru}
\address{ V.I.Romanovskiy Institute of Mathematics, Uzbekistan Academy of Sciences, 29, Dormon Yoli street, 100125  Tashkent,   Uzbekistan}

\author[Kudaybergenov]{Karimbergen Kudaybergenov}
\email{karim2006@mail.ru}
\address{ Department of Mathematics, Karakalpak State University, Ch. Abdirov 1, Nukus 230113, Uzbekistan}

\author[Omirov]{Bakhrom Omirov}
\email{omirovb@mail.ru}
\address{ National University of Uzbekistan,  4, University street, 100174, Tashkent,   Uzbekistan}

\title[Local and 2-local derivations and automorphisms]{Local and 2-local derivations and automorphisms on simple Leibniz algebras}

\maketitle

\begin{abstract}
The present paper is devoted to local and 2-local derivations
and automorphism of complex finite-dimensional simple Leibniz algebras.
We prove that all  local  derivations and 2-local derivations on a
 finite-dimensional complex simple Leibniz  algebra are automatically
   derivations. We show that  nilpotent  Leibniz  algebras as a rule  admit
     local derivations  and 2-local derivations  which are not derivations.
     Further we consider  automorphisms of simple Leibniz algebras.  We  prove that
every 2-local automorphism on a complex finite-dimensional
 simple Leibniz algebra is an automorphism and show that nilpotent
 Leibniz algebras admit 2-local automorphisms which are not automorphisms.
A similar problem concerning  local automorphism
 on simple Leibniz algebras is reduced to the case of simple Lie algebras.
\end{abstract}

\medskip

\medskip \textbf{AMS Subject Classifications (2010):
17A32,  17B10, 17B20.}

\textbf{Key words:} Lie algebra, Leibniz algebra, simple algebra, irreducible module,
derivation, inner derivation, local derivation, 2-local derivation, automorphism, local automorphism, 2-local automorphism.

\section{Introduction}\label{sec:intro}

Let $\mathcal{A}$ be an associative  algebra. Recall that a linear mapping $\Phi$ of $\mathcal{A}$ into itself is called a local automorphism (respectively, a local derivation) if for every $x\in \mathcal{A}$ there exists an automorphism (respectively, a derivation) $\Phi_x$ of $\mathcal{A},$ depending on $x,$ such that $\Phi_x(x)=\Phi(x).$ These notions were introduced and investigated independently by Kadison~\cite{Kadison90} and Larson and Sourour~\cite{Larson90}. Later, in 1997, P.~\v{S}emrl~\cite{Semrl97}  introduced the concepts of 2-local automorphisms and 2-local derivations. A map $\Phi:\mathcal{A} \rightarrow \mathcal{A}$ (not linear in general) is called a 2-local automorphism (respectively, a 2-local derivation) if for every $x, y\in \mathcal{A},$ there exists an automorphism (respectively, a derivation) $\Phi_{x,y}:\mathcal{A} \rightarrow \mathcal{A}$ (depending on $x, y$) such that $\Phi_{x.y}(x)=\Phi(x),$ $\Phi_{x,y}(y)=\Phi(y).$

The above papers gave rise to series of works devoted to description of mappings which are close to automorphisms and derivations of C*-algebras and  operator algebras. For details we refer to the paper \cite{AK} and the survey \cite{AKR}.

Later, several papers have been devoted to similar notions and corresponding problems for derivations and automorphisms of Lie algebras.

Let $\mathcal{L}$ be a Lie algebra. A derivation (respectively, an automorphism) $\Phi$ of  $\mathcal{L}$ is a linear (respectively, an invertible linear) map $\Phi:\mathcal{L} \rightarrow \mathcal{L}$ which satisfies the condition  $\Phi([x, y]) =[\Phi(x), y] +[x, \Phi(y)]$ (respectively, $\Phi([x, y]) =[\Phi(x), \Phi(y)])$ for all $x, y\in \mathcal{L}.$

The notions of a local derivation (respectively, a local automorphism) and a 2-local derivation (respectively, a 2-local
automorphism) for Lie algebras are defined as above, similar to the associative case. Every derivation (respectively,
automorphism) of a Lie algebra $\mathcal{L}$ is a local derivation (respectively, local automorphism) and a 2-local derivation (respectively, 2-local automorphism). For a given Lie algebra $\mathcal{L},$ the main problem concerning these notions is to prove that they automatically become a  derivation (respectively, an automorphism) or to give examples of local and 2-local derivations or automorphisms of $\mathcal{L},$ which are not derivations or automorphisms, respectively.

Solution of such problems for finite-dimensional Lie algebras over algebraically closed field of
 zero characteristic were obtained in \cite{AKR15, AK2016, AK2016A} and \cite{Chen}.
  Namely, in \cite{AKR15} it is proved that every 2-local derivation on a semi-simple Lie algebra
   $\mathcal{L}$ is a derivation and that each finite-dimensional nilpotent Lie algebra with
    dimension larger than two admits 2-local derivation which is not a derivation.
     In \cite{AK2016} we have proved that every local derivation on semi-simple Lie
      algebras is a derivation and gave examples of nilpotent finite-dimensional Lie
       algebras with local derivations which are not derivations. Concerning 2-local
        automorphism, Chen and Wang in \cite{Chen} prove that if $\mathcal{L}$ is a
        simple Lie algebra of type $\mathcal{A}_l$, $\mathcal{D}_l$ or $\mathcal{E}_k$
        ($k=6,7,8$) over an algebraically closed field of characteristic zero, then
        every 2-local automorphism of $\mathcal{L}$ is an automorphism. Finally,
        in \cite{AK2016A} Ayupov and Kudaybergenov generalized this result of \cite{Chen}
        and proved that every 2-local automorphism of a  finite-dimensional semi-simple
         Lie algebra over an algebraically closed field of characteristic zero is an
         automorphism. Moreover, they show also that every  nilpotent Lie algebra with
         finite dimension larger than two admits 2-local automorphisms which is not an automorphism.
It should be noted that similar problems for local automorphism of finite-dimensional Lie algebras still remain open.

Leibniz algebras present a "non antisymmetric" extension of Lie algebras.
In last decades a series of papers have been devoted to the structure theory and classification
of finite-dimensional Leibniz algebras.
 Several classical theorems from Lie algebras theory have been extended
to the Leibniz algebras case. For some details from the theory of
Leibniz algebras we refer to the papers \cite{AAO05, AAO06, AAOK,
L93, Fial, LP}. In particular, for a finite-dimensional simple Leibniz
algebras over an algebraically closed field of characteristic
zero, derivations have been completely described in \cite{RMO}.

In the present paper we study local and 2-local derivations and automorphisms of finite-dimensional simple complex Leibniz algebras.

In Section 2 we give some preliminaries from the Leibniz algebras
theory. In Section~3 we prove that every 2-local derivation on a
simple Leibniz algebra $\mathcal{L}$ is a derivation.  We also
prove that all nilpotent Leibniz algebras (except so-called
null-filiform Leibniz algebra) admit 2-local derivations which are
not derivations. Similar results for local derivations on simple
Leibniz algebras are obtained in Section~4. Namely, we show that
every local derivation on a simple complex Leibniz algebra is a
derivation and that each finite-dimensional filiform, Leibniz
algebra $\mathcal{L}$ with $\dim \mathcal{L}\geq 3$ admits a local
derivation which is not a derivation.

In Section~5  we study automorphisms of simple Leibniz algebras.

Finally, in Section 6 we consider 2-local and local automorphisms
of finite-dimensional Leibniz algebras. First we show that every
2-local automorphism of a  complex simple Leibniz algebra is an
automorphism and prove that each $n$-dimensional nilpotent Leibniz
algebra such that $n\geq2$ and $\dim[\mathcal{L}, \mathcal{L}]\leq
n-2$, admits a 2-local automorphism which is not an automorphism.
At the end of this Section 6 we show that the problem concerning
local automorphisms of simple complex Leibniz algebras is reduced
to the similar problem for simple Lie algebras, which is,
unfortunately, still open.

\section{Preliminaries}

In this section we give some necessary definitions and preliminary
results.

\begin{defn} An algebra $(\mathcal{L},[\cdot,\cdot])$ over a field $\mathbb{F}$ is called a
Leibniz algebra if it is defined by the identity
$$[x,[y,z]]=[[x,y],z] - [[x,z],y], \ \mbox{for all}\ x,y \in \mathcal{L},$$
which is called Leibniz identity.
\end{defn}

It is a generalization of the Jacobi identity since under the condition of
anti-symmetricity of the product ``[$\cdot,\cdot$]'' this identity
changes to the Jacobi identity. In fact, the definition above
is the notion of the right Leibniz algebra, where ``right''
indicates that any right multiplication operator is a derivation
of the algebra. In the present  paper the term ``Leibniz algebra'' will
always mean the ``right Leibniz algebra''. The left Leibniz
algebra is characterized by the property that any left
multiplication operator is a derivation.

Let $\mathcal{L}$ be a Leibniz algebra and $\mathcal{I}$ be the
ideal generated by squares in $\mathcal{L}:$
$\mathcal{I}=\textrm{id}\langle[x,x]\  | \ x\in \mathcal{L}
\rangle.$ The quotient $\mathcal{L}/\mathcal{I}$ is called  "the
associated Lie algebra" of the Leibniz algebra $\mathcal{L}.$ The
natural epimorphism $\varphi  : \mathcal{L} \rightarrow
\mathcal{L}/\mathcal{I}$ is a homomorphism of Leibniz algebras.
The ideal $\mathcal{I}$ is the minimal ideal with  the
property that the quotient algebra is a Lie algebra. It is easy to
see that the ideal $\mathcal{I}$ coincides with the subspace of
$\mathcal{L}$ spanned by the squares,  and that  $\mathcal{L}$ is the left
annihilator of $\mathcal{I},$ i.e., $[\mathcal{L},\mathcal{I}]=0.$

\begin{defn}
A Leibniz algebra $\mathcal{L}$ is called simple if its ideals are
only $\{0\}, \mathcal{I}, \mathcal{L}$ and
$[\mathcal{L},\mathcal{L}]\neq \mathcal{I}.$
\end{defn}
This definition agrees with that of simple Lie algebra, where
$\mathcal{I}=\{0\}.$

For a given Leibniz algebra $\mathcal{L}$ we define derived
sequence as follows:
$$
\mathcal{L}^{[1]}=\mathcal{L},\quad \mathcal{L}^{[s+1]}=[\mathcal{L}^{[s]},\mathcal{L}^{[s]}], \quad s \geq 1.
$$

\begin{defn} A Leibniz algebra $\mathcal{L}$ is called solvable, if there exists  $s\in\mathbb N$ such that $\mathcal{L}^{[s]}=\{0\}.$
\end{defn}

It is known that sum of solvable ideals of a Leibniz algebra is
solvable ideal too. Therefore each  Leibniz algebra contains a maximal solvable ideal which is called {\it
solvable radical.}

The following theorem recently proved by D.~Barnes \cite{Barnes}
presents an analogue of Levi--Malcev's theorem for Leibniz
algebras.

\begin{thm} \label{t2}  \label{thmBarnes} Let $\mathcal{L}$ be a finite dimensional Leibniz algebra over a
field of characteristic zero and let $\mathcal{R}$ be its solvable radical. Then there exists a semisimple Lie subalgebra
$\mathcal{S}$ of $\mathcal{L}$ such that $\mathcal{L}=\mathcal{S}\dot{+}\mathcal{R}.$
\end{thm}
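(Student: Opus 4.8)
The plan is to lift the classical Levi--Malcev theorem for Lie algebras through the associated Lie algebra $\mathcal{L}/\mathcal{I}$, and then to carry out the residual splitting by a cohomological argument adapted to the Leibniz setting.

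\textbf{Reduction to the case $\mathcal{R}=\mathcal{I}$ abelian.} Since $[\mathcal{L},\mathcal{I}]=0$ we get $[\mathcal{I},\mathcal{I}]\subseteq[\mathcal{L},\mathcal{I}]=0$, so $\mathcal{I}$ is abelian, hence a solvable ideal, and thus $\mathcal{I}\subseteq\mathcal{R}$. The quotient $\mathcal{L}/\mathcal{I}$ is a finite-dimensional Lie algebra over a field of characteristic zero with solvable radical $\mathcal{R}/\mathcal{I}$, so by the classical Levi--Malcev theorem there is a semisimple Lie subalgebra $\overline{\mathcal{S}}\subseteq\mathcal{L}/\mathcal{I}$ with $\mathcal{L}/\mathcal{I}=\overline{\mathcal{S}}\,\dot{+}\,\mathcal{R}/\mathcal{I}$. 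Put $\mathcal{L}_1=\varphi^{-1}(\overline{\mathcal{S}})$. Then $\mathcal{L}_1$ is a subalgebra with $\mathcal{I}\subseteq\mathcal{L}_1$, one checks $\mathcal{L}_1\cap\mathcal{R}=\mathcal{I}$ and $\mathcal{L}=\mathcal{L}_1+\mathcal{R}$, and $\varphi$ induces an isomorphism $\mathcal{L}_1/\mathcal{I}\cong\overline{\mathcal{S}}$. Hence it suffices to produce a subalgebra $\mathcal{S}\subseteq\mathcal{L}_1$ with $\mathcal{L}_1=\mathcal{S}\,\dot{+}\,\mathcal{I}$ and $\varphi|_{\mathcal{S}}$ an isomorphism onto $\overline{\mathcal{S}}$; such an $\mathcal{S}$ is then a semisimple Lie algebra (being isomorphic to $\overline{\mathcal{S}}$), and from $\mathcal{S}\cap\mathcal{R}\subseteq\mathcal{L}_1\cap\mathcal{R}=\mathcal{I}$ and $\mathcal{S}\cap\mathcal{I}=0$ together with $\mathcal{I}\subseteq\mathcal{R}$ one gets $\mathcal{L}=\mathcal{S}\,\dot{+}\,\mathcal{R}$.

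\textbf{The extension and its cocycle.} We are left with a Leibniz extension $0\to\mathcal{I}\to\mathcal{L}_1\to\overline{\mathcal{S}}\to0$ (the last map induced by $\varphi$) in which $\mathcal{I}$ is abelian and $[\mathcal{L}_1,\mathcal{I}]=0$; the only surviving structure is the right action $i\cdot x=[i,x]$, which descends to a module structure of the semisimple Lie algebra $\overline{\mathcal{S}}$ on the finite-dimensional space $\mathcal{I}$. Choosing a linear section $\sigma$ of $\varphi$, the obstruction cochain $f(x,y)=[\sigma(x),\sigma(y)]-\sigma([x,y])\in\mathcal{I}$ satisfies, by the Leibniz identity in $\mathcal{L}_1$, the $2$-cocycle relation $f(x,[y,z])=f([x,y],z)-f([x,z],y)+f(x,y)\cdot z-f(x,z)\cdot y$, and replacing $\sigma$ by $\sigma+g$ alters $f$ by the coboundary $(x,y)\mapsto g(x)\cdot y-g([x,y])$. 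Consequently the desired splitting exists precisely when the class of $f$ vanishes in the second Leibniz cohomology $HL^2(\overline{\mathcal{S}},\mathcal{I})$, and if $f=0$ for a suitable $\sigma$ then $\mathcal{S}=\sigma(\overline{\mathcal{S}})$ is the required subalgebra.

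\textbf{Vanishing of the obstruction --- the main difficulty.} The crux is the Leibniz analogue of Whitehead's second lemma: $HL^2(\overline{\mathcal{S}},\mathcal{I})=0$ for a semisimple Lie algebra $\overline{\mathcal{S}}$ over a field of characteristic zero and a finite-dimensional module $\mathcal{I}$. By Weyl's complete reducibility theorem one writes $\mathcal{I}=\mathcal{I}_0\oplus\mathcal{I}_1$, where $\mathcal{I}_0$ is the submodule of $\overline{\mathcal{S}}$-invariants and $\mathcal{I}_1$ a complementary submodule with no trivial summand; both are ideals of $\mathcal{L}_1$, so it suffices to split the two sub-extensions separately. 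On $\mathcal{I}_1$ the Casimir element of $\overline{\mathcal{S}}$ acts invertibly, so the classical Whitehead argument --- a contracting homotopy assembled from the Casimir operator --- carries over to the Leibniz cochain complex and gives $HL^\ast(\overline{\mathcal{S}},\mathcal{I}_1)=0$. For $\mathcal{I}_0$ the sub-extension is central, and one must show that every central Leibniz extension of a semisimple Lie algebra by a finite-dimensional space splits, i.e. $HL^2(\overline{\mathcal{S}},\mathbb{F})=0$; here the cocycle relation together with $[\overline{\mathcal{S}},\overline{\mathcal{S}}]=\overline{\mathcal{S}}$ reduces matters to the vanishing of $H^1$ and $H^2$ of $\overline{\mathcal{S}}$ with finite-dimensional coefficients, i.e. again to Whitehead's lemmas. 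This is the step I expect to demand the most work. Once $HL^2(\overline{\mathcal{S}},\mathcal{I})=0$ is established, a section $\sigma$ with $f=0$ exists, and $\mathcal{S}=\sigma(\overline{\mathcal{S}})$, viewed inside $\mathcal{L}_1\subseteq\mathcal{L}$, is a semisimple Lie subalgebra with $\mathcal{L}=\mathcal{S}\,\dot{+}\,\mathcal{R}$. Characteristic zero is used exactly where the classical Levi--Malcev theorem, Weyl's theorem, and the Casimir/Whitehead vanishing are invoked.
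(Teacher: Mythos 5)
The paper does not actually prove this statement: Theorem \ref{thmBarnes} is quoted from Barnes \cite{Barnes}, so there is no internal proof to compare yours against. Your reduction is correct and is the standard first move: $\mathcal{I}$ is an abelian solvable ideal, so $\mathcal{I}\subseteq\mathcal{R}$; the classical Levi--Malcev theorem applied to the Lie algebra $\mathcal{L}/\mathcal{I}$ produces the subalgebra $\mathcal{L}_1$ with $\mathcal{L}_1\cap\mathcal{R}=\mathcal{I}$ and $\mathcal{L}=\mathcal{L}_1+\mathcal{R}$; and everything then rests on splitting the abelian extension $0\to\mathcal{I}\to\mathcal{L}_1\to\overline{\mathcal{S}}\to 0$ in which $\mathcal{I}$ carries only a right $\overline{\mathcal{S}}$-action. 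Your identification of the obstruction as a class in $HL^2(\overline{\mathcal{S}},\mathcal{I})$, with the cocycle and coboundary formulas you wrote, is also correct.

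The genuine gap is the vanishing step. You assert that on $\mathcal{I}_1$ ``the classical Whitehead argument --- a contracting homotopy assembled from the Casimir operator --- carries over to the Leibniz cochain complex and gives $HL^{\ast}(\overline{\mathcal{S}},\mathcal{I}_1)=0$.'' That conclusion is false in degree one, so no such contracting homotopy can exist. For an antisymmetric module $M$ (left action zero) the Leibniz $1$-cocycles are the maps $g$ with $g([x,y])=[g(x),y]$, i.e.\ $Z^1=\mathrm{Hom}_{\overline{\mathcal{S}}}(\overline{\mathcal{S}}_{\mathrm{ad}},M)$, while the $1$-coboundaries $x\mapsto[x,m]$ all vanish; hence $HL^1(\overline{\mathcal{S}},M)\neq 0$ whenever $M$ contains a copy of the adjoint module, even though the Casimir acts invertibly on such an $M$. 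This is not a pathological example: it is exactly the outer derivation $\theta$ of \eqref{theta} and \eqref{equal} that the present paper records for simple Leibniz algebras with $\mathcal{G}\cong\mathcal{I}$. So the Casimir/Cartan-homotopy mechanism provably breaks on the Leibniz complex, and your justification of the crucial vanishing collapses. The statement you actually need --- vanishing in degree two only, $HL^2(\overline{\mathcal{S}},\mathcal{I})=0$ --- is true, but it is a nontrivial theorem (the second Whitehead lemma for Leibniz algebras, Feldvoss--Wagemann) whose proof is delicate precisely because the first Whitehead lemma fails. Barnes's own short proof avoids any such general cohomological vanishing: after the same reduction he splits the defect $f(x,y)=[\sigma(x),\sigma(y)]-\sigma([x,y])$ into its alternating part, which is a Chevalley--Eilenberg $2$-cocycle killed by the classical second Whitehead lemma for Lie algebras, and its symmetric part, which is removed by a separate direct argument. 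As written, your proof is incomplete at its central step, and the central-extension case $HL^2(\overline{\mathcal{S}},\mathbb{F})=0$ is likewise only asserted.
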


This theorem applied to a simple Leibniz algebra $\mathcal{L}$ gives

\begin{cor}
Let $\mathcal{L}$ be a simple Leibniz algebra over a field of
characteristic zero and let $\mathcal{I}$ be the ideal generated by
squares in $\mathcal{L},$ then there exists a simple Lie algebra
$\mathcal{G}$ such that $\mathcal{I}$ is an irreducible module
over $\mathcal{G}$ and
$\mathcal{L}=\mathcal{G}\dot{+}\mathcal{I}.$
\end{cor}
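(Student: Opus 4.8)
The plan is to deduce the Corollary from Barnes' theorem (Theorem~\ref{thmBarnes}) together with the basic structural facts about $\mathcal{I}$ recalled in the preliminaries, namely that $\mathcal{I}$ is the span of squares, that it is an ideal, and that $[\mathcal{L},\mathcal{I}]=0$. First I would apply Theorem~\ref{thmBarnes} to the simple Leibniz algebra $\mathcal{L}$: its solvable radical $\mathcal{R}$ gives a decomposition $\mathcal{L}=\mathcal{S}\dot{+}\mathcal{R}$ with $\mathcal{S}$ a semisimple Lie subalgebra. The key first step is to identify $\mathcal{R}$ with $\mathcal{I}$. On one hand $\mathcal{I}$ is a proper ideal (since $\mathcal{L}$ is simple and $\mathcal{I}\neq\mathcal{L}$ because $[\mathcal{L},\mathcal{L}]\neq\mathcal{I}$ forces $\mathcal{I}$ to be one of the admissible ideals $\{0\},\mathcal{I},\mathcal{L}$ but not all of $\mathcal{L}$ — actually one argues $\mathcal{I}\neq\mathcal{L}$ directly since $[\mathcal{L},\mathcal{I}]=0$ would otherwise make $\mathcal{L}$ have trivial multiplication on a large piece) and it is solvable because $[\mathcal{L},\mathcal{I}]=0$ implies $[\mathcal{I},\mathcal{I}]=0$, i.e.\ $\mathcal{I}$ is abelian, hence solvable. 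Therefore $\mathcal{I}\subseteq\mathcal{R}$. Conversely, $\mathcal{R}$ is a solvable ideal of $\mathcal{L}$, so it is a proper ideal, hence by simplicity $\mathcal{R}\in\{\{0\},\mathcal{I}\}$; since $\mathcal{L}$ is not a Lie algebra we have $\mathcal{I}\neq\{0\}$, and $\mathcal{R}\supseteq\mathcal{I}\neq\{0\}$ forces $\mathcal{R}=\mathcal{I}$.

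Next I would pin down $\mathcal{S}$. From $\mathcal{R}=\mathcal{I}$ we get $\mathcal{L}=\mathcal{S}\dot{+}\mathcal{I}$ with $\mathcal{S}$ semisimple, and the quotient $\mathcal{L}/\mathcal{I}$ — the associated Lie algebra — is isomorphic to $\mathcal{S}$. So it remains to show $\mathcal{S}$ is in fact \emph{simple}, and that $\mathcal{I}$ is irreducible as an $\mathcal{S}$-module (where the action is $s\cdot i=[i,s]$, using that right multiplications act on $\mathcal{I}$ and $[\mathcal{L},\mathcal{I}]=0$ kills the left action). For simplicity of $\mathcal{S}$: write $\mathcal{S}=\mathcal{S}_1\oplus\cdots\oplus\mathcal{S}_k$ as a sum of simple ideals of $\mathcal{S}$. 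If $k\geq 2$, or if $k=1$ but $\mathcal{I}$ is not $\mathcal{S}$-irreducible, I would build a proper nonzero ideal of $\mathcal{L}$ different from $\{0\},\mathcal{I},\mathcal{L}$, or one showing $[\mathcal{L},\mathcal{L}]=\mathcal{I}$, contradicting simplicity of $\mathcal{L}$. Concretely, if $0\neq\mathcal{M}\subsetneq\mathcal{I}$ is an $\mathcal{S}$-submodule, then $\mathcal{M}$ is automatically an ideal of $\mathcal{L}$ (it is stable under right multiplication by $\mathcal{S}$ by assumption, under right multiplication by $\mathcal{I}$ since $[\mathcal{I},\mathcal{I}]=0$, and left-annihilated), contradicting that the only ideals inside $\mathcal{I}$ are $\{0\}$ and $\mathcal{I}$. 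Likewise a proper nontrivial summand $\mathcal{S}_1$ together with the submodule it generates in $\mathcal{I}$ yields a proper ideal of the wrong type; one checks the complementary decomposition cannot occur, which forces $k=1$ and $\mathcal{G}:=\mathcal{S}$ simple.

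I expect the main obstacle to be the bookkeeping in this last step: carefully verifying that an $\mathcal{S}$-submodule of $\mathcal{I}$ is genuinely an ideal of $\mathcal{L}$ (which is clean given $[\mathcal{L},\mathcal{I}]=0$ and $[\mathcal{I},\mathcal{I}]=0$), and, in the direction ruling out $k\geq 2$, producing from a proper semisimple summand a proper nonzero ideal of $\mathcal{L}$ that is neither $\mathcal{I}$ nor $\mathcal{L}$ — here one must use the hypothesis $[\mathcal{L},\mathcal{L}]\neq\mathcal{I}$, since a decomposition of $\mathcal{I}$ that is "balanced" between two simple summands could otherwise masquerade as $[\mathcal{L},\mathcal{L}]$. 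Once simplicity of $\mathcal{G}$ and irreducibility of $\mathcal{I}$ are established, the decomposition $\mathcal{L}=\mathcal{G}\dot{+}\mathcal{I}$ is exactly the content of the Corollary, and the proof is complete.
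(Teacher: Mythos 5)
Your argument is correct and follows exactly the route the paper intends: the Corollary is stated there as an immediate consequence of Barnes' Levi theorem, with none of the details you supply written out (identifying the radical with $\mathcal{I}$, and observing that any $\mathcal{S}$-submodule $\mathcal{M}\subseteq\mathcal{I}$, as well as any subspace $\mathcal{S}_1+\mathcal{I}$ with $\mathcal{S}_1$ a simple summand of $\mathcal{S}$, is automatically an ideal of $\mathcal{L}$ because $[\mathcal{L},\mathcal{I}]=0$ and $[\mathcal{I},\mathcal{I}]=0$). One small relocation: the hypothesis $[\mathcal{L},\mathcal{L}]\neq\mathcal{I}$ is not actually needed to rule out $k\geq 2$ (the ideal $\mathcal{S}_1+\mathcal{I}$ is already none of $\{0\}$, $\mathcal{I}$, $\mathcal{L}$); where it is genuinely needed is earlier, in the step you pass over --- showing the radical $\mathcal{R}$ is proper --- since if $\mathcal{L}$ were solvable then $[\mathcal{L},\mathcal{L}]$ would be a proper ideal, hence $\{0\}$ or $\mathcal{I}$, and only that extra hypothesis (together with $\mathcal{I}\subseteq[\mathcal{L},\mathcal{L}]$, $\mathcal{I}\neq\{0\}$) excludes both possibilities.
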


Further we shall use the following important result \cite{Hum}.

\begin{thm} (Schur's Lemma) \label{SL} Let $\mathcal{G}$ be a complex Lie algebra and let $\mathcal{U}$
and $\mathcal{V}$ be irreducible $\mathcal{G}$-modules. Then
\begin{itemize}
\item[(i)] Any $\mathcal{G}$-module homomorphism $\Theta : \mathcal{U} \rightarrow \mathcal{V}$
is either trivial or an isomorphism;
\item[(ii)] A linear map $\Theta : \mathcal{V} \rightarrow \mathcal{V}$  is a $\mathcal{G}$-module homomorphism if and only
if $\Theta = \lambda id_{|_\mathcal{V}}$ for some $\lambda \in
\mathbb{C}.$
\end{itemize}
\end{thm}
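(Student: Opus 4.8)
The plan is to establish the two parts in order, deducing (ii) from (i) by a standard eigenvalue argument. For part (i), I would begin with an arbitrary $\mathcal{G}$-module homomorphism $\Theta : \mathcal{U} \rightarrow \mathcal{V}$ and inspect its kernel and image. First I would check that $\ker\Theta$ is a $\mathcal{G}$-submodule of $\mathcal{U}$: if $\Theta(u)=0$, then for each $x\in\mathcal{G}$ we have $\Theta(x\cdot u)=x\cdot\Theta(u)=0$. Since $\mathcal{U}$ is irreducible, $\ker\Theta$ is either $\{0\}$ or all of $\mathcal{U}$, the latter meaning $\Theta$ is trivial. Likewise $\mathrm{Im}\,\Theta$ is a $\mathcal{G}$-submodule of $\mathcal{V}$, so by irreducibility of $\mathcal{V}$ it equals either $\{0\}$ or $\mathcal{V}$. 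Putting these together, if $\Theta\neq 0$ then necessarily $\ker\Theta=\{0\}$ and $\mathrm{Im}\,\Theta=\mathcal{V}$, so $\Theta$ is a bijective module homomorphism, hence a $\mathcal{G}$-module isomorphism.

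For part (ii), one implication is immediate, since $\lambda\, id_{|_\mathcal{V}}$ plainly commutes with the action of every $x\in\mathcal{G}$ and is therefore a module homomorphism. For the converse, let $\Theta : \mathcal{V}\rightarrow\mathcal{V}$ be a module homomorphism. Here I would use that $\mathbb{C}$ is algebraically closed and $\mathcal{V}$ is finite-dimensional, so the linear operator $\Theta$ admits at least one eigenvalue $\lambda\in\mathbb{C}$. Then $\Theta-\lambda\, id_{|_\mathcal{V}}$ is again a $\mathcal{G}$-module endomorphism of $\mathcal{V}$, being a difference of two such maps, and it has nonzero kernel, namely the $\lambda$-eigenspace. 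Applying part (i) with $\mathcal{U}=\mathcal{V}$, the map $\Theta-\lambda\, id_{|_\mathcal{V}}$ is either trivial or an isomorphism; since its kernel is nonzero it cannot be an isomorphism, so it is trivial, which gives $\Theta=\lambda\, id_{|_\mathcal{V}}$.

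The only genuinely delicate point is the appeal to an eigenvalue in part (ii): this relies on the ground field being algebraically closed and the module being finite-dimensional. Both hypotheses are in force in every situation where the lemma is invoked in this paper, namely complex algebras and their finite-dimensional irreducible modules, so the argument applies without modification. Everything else is a routine verification that kernels and images of module homomorphisms are submodules, together with the irreducibility dichotomy.
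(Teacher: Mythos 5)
Your proof is correct and is the standard argument for Schur's Lemma; the paper itself offers no proof at all, simply quoting the statement from Humphreys' book, so there is nothing to compare against beyond noting that your argument is exactly the one the cited reference gives. Your one point of care --- that part (ii) needs $\mathcal{V}$ finite-dimensional over an algebraically closed field so that $\Theta$ has an eigenvalue --- is well taken, since the theorem as stated in the paper omits the finite-dimensionality hypothesis; you are right that it is satisfied everywhere the lemma is invoked (the modules are always finite-dimensional irreducible modules over a complex simple Lie algebra), but strictly speaking the statement as printed is slightly too general for the eigenvalue argument, and your flagging of this is appropriate.
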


The notion  of derivation for a Leibniz algebras  is
defined similar to the the Lie algebras case as follows.
\begin{defn}
A linear transformation $d$ of a Leibniz algebra $\mathcal{L}$ is
said to be a derivation if for any $x, y\in \mathcal{L}$ one has
$$
D([x,y])=[D(x),y]+[x, D(y)].$$
\end{defn}

Let $a$ be an element of a Leibniz algebra $\mathcal{L}.$ Consider
the operator of right multiplication $R_a:\mathcal{L}\to
\mathcal{L}$, defined by $R_a(x)=[x,a].$  The Leibniz identity which characterizes  Leibniz algebras  exactly means  that every right multiplication operator $R_a$ is a derivation. Such derivations are called  inner derivation on  $\mathcal{L}$ . Denote by $Der (\mathcal{L})$ - the space of all derivations of $\mathcal{L}$.

Now we shall present the main subjects considered in this paper, so-called
local and $2$-local derivation.

\begin{defn}
A linear operator  $\Delta : \ \mathcal{L} \ \rightarrow
\mathcal{L}$ is called a local derivation if for any $x\in
\mathcal{L}$ there exists a derivation $D_{x}\in Der
(\mathcal{L})$ such that
$$
\Delta(x)=D_{x}(x).
$$
\end{defn}

\begin{defn}
A map $\Delta :  \mathcal{L} \rightarrow \mathcal{L}$ (not
necessary  linear) is called $2$-local derivation if for any $x,
y\in \mathcal{L}$ there exists a derivation $D_{x,y}\in Der
(\mathcal{L})$ such that
$$
\Delta(x)=D_{x,y}(x), \quad \Delta(y)=D_{x,y}(y).
$$
\end{defn}

From now on we assume that all algebras are considered  over the field of complex numbers $\mathbb{C}$ and suppose that $\mathcal{L}$ is a  non-Lie Leibniz algebra, i.e. $\mathcal{I}\neq\{0\}.$

Now we give a description of  derivations on simple Leibniz algebras obtained in \cite{RMO}.

Let \(\mathcal{L}\) be a simple Leibniz algebra with
$\mathcal{L}=\mathcal{G}\dot{+}\mathcal{I}.$ Consider a projection
operator \(\textrm{pr}_{\mathcal{I}}\) from \(\mathcal{L}\) onto
\(\mathcal{I},\) that is
\begin{equation}\label{proj}
\textrm{pr}_\mathcal{I}(x+i)=i,\, x+i\in
\mathcal{L}\dot{+}\mathcal{I}.
\end{equation}

Suppose that   \(\mathcal{G}\) and \(\mathcal{I}\) are not
isomorphic as \(\mathcal{G}\)-modules. Then any derivation $D$ on
$\mathcal{L}$ can be represented as
\begin{equation}\label{neq}
D=R_a+\lambda \textrm{pr}_\mathcal{I},
\end{equation}
where \(R_a\) is an inner derivation generated by an element
\(a\in \mathcal{G},\) \(\textrm{pr}_\mathcal{I}\) is a derivation
of the form~\eqref{proj}, \(\lambda\in \mathbb{C}.\)

Now let us assume that \(\mathcal{G}\) and \(\mathcal{I}\) are
isomorphic as \(\mathcal{G}\)-modules. There exists a unique (up
to multiplication by constant)  isomorphism \(\theta\) of linear
spaces \(\mathcal{G}\) and \(\mathcal{I}\) such that
\(\theta([x,y])=[\theta(x), y]\) for all \(x, y\in \mathcal{G},\)
i.e., \(\theta\) is a module isomorphism of
\(\mathcal{G}\)-modules  \(\mathcal{G}\) and \(\mathcal{I}.\) Let
us extend \(\theta\) onto \(\mathcal{L}\) as
\begin{equation}\label{theta}
\theta(x+i)=\theta(x),\, x+i\in \mathcal{L}\dot{+}\mathcal{I}.
\end{equation}

For a simple Leibniz algebra \(\mathcal{L}\) with \(\dim
\mathcal{G}=\dim \mathcal{I}\)  any derivation $D$ on
$\mathcal{L}$ can be represented as
\begin{equation}\label{equal}
D=R_a+\omega\theta+\lambda \textrm{pr}_\mathcal{I},
\end{equation}
where \(a\in \mathcal{G},\) \(\textrm{pr}_\mathcal{I}\) is a
derivation of the form~\eqref{proj} and \(\theta\) is a derivation
of the form~\eqref{theta}, \(\lambda, \,\omega\in \mathbb{C}.\)

\section{2-Local derivations on  Leibniz algebras}
\label{thr}

\subsection{2-Local derivations on simple Leibniz algebras}

\

    The first main result of this section is the following.

\begin{thm}\label{thm2local}
Let $\mathcal{L}$ be a simple complex Leibniz algebra. Then any
2-local derivation on  $\mathcal{L}$ is  a derivation.
\end{thm}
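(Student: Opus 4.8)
The strategy is to fix a 2-local derivation $\Delta$ on $\mathcal{L}=\mathcal{G}\dot{+}\mathcal{I}$ and reduce it, step by step, to one of the canonical forms in~\eqref{neq} or~\eqref{equal}. The key leverage is the explicit structure theorem for $Der(\mathcal{L})$ from \cite{RMO}: every derivation is $R_a+\lambda\,\textrm{pr}_\mathcal{I}$ (when $\mathcal{G}\not\cong\mathcal{I}$) or $R_a+\omega\theta+\lambda\,\textrm{pr}_\mathcal{I}$ (when $\mathcal{G}\cong\mathcal{I}$). First I would restrict attention to $\mathcal{G}$: for $x,y\in\mathcal{G}$, the derivation $D_{x,y}=R_{a}+\omega\theta+\lambda\,\textrm{pr}_\mathcal{I}$ satisfies $D_{x,y}(z)=[z,a]+\omega\theta(z)$ on $\mathcal{G}$ (since $\textrm{pr}_\mathcal{I}$ kills $\mathcal{G}$), so $\Delta|_\mathcal{G}$ behaves like a 2-local derivation into $\mathcal{L}$ whose "derivation part" is governed by the pair $(a,\omega)$. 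The plan is to compose with the projection $\textrm{pr}_\mathcal{G}=\mathrm{id}-\textrm{pr}_\mathcal{I}$ and with $\textrm{pr}_\mathcal{I}$ separately; $\textrm{pr}_\mathcal{G}\circ\Delta|_\mathcal{G}$ then looks like a 2-local derivation of the simple Lie algebra $\mathcal{G}$, so by the semisimple-Lie case (Theorem proved in \cite{AKR15}, which we may invoke) it is an inner derivation $R_{a_0}|_\mathcal{G}$ for a fixed $a_0\in\mathcal{G}$. Subtracting $R_{a_0}$, we may assume $\Delta$ vanishes on $\mathcal{G}$ modulo $\mathcal{I}$, i.e. $\Delta(\mathcal{G})\subseteq\mathcal{I}$.

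Next I would analyze $\Delta$ on $\mathcal{I}$. For $i,j\in\mathcal{I}$ the relevant derivation is $D_{i,j}(z)=\lambda z$ on $\mathcal{I}$ (since $[\mathcal{I},\mathcal{G}]=0$ forces $R_a|_\mathcal{I}=0$, and $\theta$ kills $\mathcal{I}$), so $\Delta|_\mathcal{I}$ is a 2-local "scalar map": for each pair there is a scalar $\lambda_{i,j}$ with $\Delta(i)=\lambda_{i,j}i$ and $\Delta(j)=\lambda_{i,j}j$. A routine argument (pick $i,j$ linearly independent — available because $\dim\mathcal{I}\geq 2$, as $\mathcal{I}$ is a nontrivial irreducible $\mathcal{G}$-module over an algebra that is not Lie) shows all the scalars agree, so $\Delta|_\mathcal{I}=\lambda\cdot\mathrm{id}_\mathcal{I}$ for a single $\lambda\in\mathbb{C}$. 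Subtracting $\lambda\,\textrm{pr}_\mathcal{I}$, we reduce to the case $\Delta|_\mathcal{I}=0$ and $\Delta(\mathcal{G})\subseteq\mathcal{I}$.

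At this stage $\Delta$ is determined by a linear map $T=\Delta|_\mathcal{G}:\mathcal{G}\to\mathcal{I}$, and for every $x\in\mathcal{G}$ there is a derivation agreeing with $\Delta$ at $x$; reading off the $\mathcal{I}$-component of~\eqref{neq} or~\eqref{equal} gives $T(x)=\omega_x\,\theta(x)$ for some scalar $\omega_x$ (and $T\equiv 0$ outright in the non-isomorphic case, finishing the proof there). So in the isomorphic case I must show the map $x\mapsto\omega_x$ is constant. Here I would use 2-locality honestly for the first time: for $x,y\in\mathcal{G}$ the single derivation $D_{x,y}=R_a+\omega\theta+\lambda\,\textrm{pr}_\mathcal{I}$ must satisfy $T(x)=\omega\,\theta(x)$ and $T(y)=\omega\,\theta(y)$ with the \emph{same} $\omega$; since $\theta$ is injective and $\mathcal{G}$ has dimension $\geq 3$, choosing $x,y$ with $\theta(x),\theta(y)$ linearly independent forces $\omega_x=\omega=\omega_y$ for all such pairs, hence $\omega_x$ is a global constant $\omega$, i.e. $T=\omega\theta|_\mathcal{G}$ and $\Delta=\omega\theta$ is a derivation. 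The main obstacle is organizing the bookkeeping so that the three "coordinates" $(a,\omega,\lambda)$ of the ambient derivation $D_{x,y}$ are pinned down consistently as $x,y$ range over mixed elements of $\mathcal{G}$ and $\mathcal{I}$; in particular one must check that the inner-derivation part extracted from the Lie-algebra step really is independent of the $\mathcal{I}$-valued perturbations, which is where the direct-sum decomposition $\mathcal{L}=\mathcal{G}\dot{+}\mathcal{I}$ and the vanishing $[\mathcal{L},\mathcal{I}]=0$ must be used carefully together with Schur's Lemma to guarantee $\theta$ is the unique (up to scalar) module isomorphism.
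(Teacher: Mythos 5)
There is a genuine gap, and it occurs at the central step of your plan. You assert that for $i,j\in\mathcal{I}$ the witnessing derivation restricts to $\mathcal{I}$ as $\lambda\,\mathrm{id}_{\mathcal{I}}$ "since $[\mathcal{I},\mathcal{G}]=0$ forces $R_a|_{\mathcal{I}}=0$". This is false: what vanishes is $[\mathcal{L},\mathcal{I}]$ (products with second factor in $\mathcal{I}$), whereas $R_a(i)=[i,a]$ with $a\in\mathcal{G}$ is exactly the right module action of $\mathcal{G}$ on $\mathcal{I}$, and since $\mathcal{I}$ is a nontrivial irreducible $\mathcal{G}$-module one has $[\mathcal{I},\mathcal{G}]=\mathcal{I}$. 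So a derivation acts on $\mathcal{I}$ as $i\mapsto[i,a]+\lambda i$, not as a scalar, and your "2-local scalar map" argument for $\Delta|_{\mathcal{I}}$ collapses. Controlling the term $[i,a]$ is precisely the hard part: the paper does it (Lemma~\ref{first} and Lemma~\ref{sixth}) by evaluating at a carefully chosen $i_0=\sum_{\beta\in\Gamma}i^{(0)}_\beta$ together with a regular element $h_0\in\mathcal{H}$, which forces $a\in\mathcal{H}$ and then $\beta(a)=-\lambda$ for all weights $\beta$, killing the whole restriction to $\mathcal{I}$ at once.

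A second, independent gap: a 2-local derivation is not assumed additive, so even if you had pinned down $\Delta|_{\mathcal{G}}$ and $\Delta|_{\mathcal{I}}$ separately, this would say nothing about $\Delta(x+i)$ for mixed elements $x+i\in\mathcal{G}\dot{+}\mathcal{I}$; your final conclusion $\Delta=\omega\theta$ is only established on $\mathcal{G}$ and $\mathcal{I}$, not on $\mathcal{L}$. The paper's route avoids both problems with the standard separating-vector scheme: Lemmata~\ref{fourth} and~\ref{seven} show that the single element $h_0+i_0$ has trivial stabilizer in $\mathrm{Der}(\mathcal{L})$, i.e.\ any derivation vanishing at $h_0+i_0$ is identically zero; then for any $x$ the derivation agreeing with $\Delta-D$ at both $h_0+i_0$ and $x$ must be zero, which pins down $\Delta$ at every element simultaneously. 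Your outline would need to be reorganized around such a separating element (or some substitute mechanism) to close these gaps; as written, the reduction steps do not go through.
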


For the proof of this Theorem we need several Lemmata.

From theory of representation of semisimple Lie algebras \cite{Hum} we have that a Cartan subalgebra $\mathcal{H}$ of Lie algebra $\mathcal{G}$ acts diagonalizable on  $\mathcal{G}$-module $\mathcal{I}:$
\[
\mathcal{I}=\bigoplus_{\alpha\in \Gamma} \mathcal{I}_\alpha,
\]
where
\begin{eqnarray*}
\mathcal{I}_\alpha & = & \{i\in \mathcal{I}:  [i, h]=\alpha(h)i,
\,\, \forall \, h\in \mathcal{H}\},\\
\Gamma & = & \{\alpha \in \mathcal{H}^\ast: \mathcal{I}_\alpha\neq
\{0\}\}
\end{eqnarray*}
 and $\mathcal{H}^\ast$ is the space of all linear functionals on $\mathcal{H}.$ Elements of $\Gamma$ are called
  weights of~$\mathcal{I}.$

For every \(\beta\in \Gamma\) take a non zero element
\(i^{(0)}_\beta\in \mathcal{I}_\beta.\) Set
\begin{eqnarray*}
i_0=\sum\limits_{\beta\in \Gamma}i^{(0)}_\beta.
\end{eqnarray*}

Fix  a regular element \(h_0\) in \(\mathcal{H},\) in particular
\[
 \{x\in \mathcal{G}; [x, h_0]=0\}=\mathcal{H}.
\]
In the following two Lemmata~\ref{first}--\ref{fourth} we assume that \(\mathcal{L}=\mathcal{G}\dot{+}\mathcal{I}\) is  a
Leibniz algebra  such that \(\mathcal{G}\) and \(\mathcal{I}\) are
non isomorphic as \(\mathcal{G}\)-modules.

\begin{lm}\label{first}
Let  \(D\) be a derivation on \(\mathcal{L}\) such that
\(D(h_0+i_0)=0.\) Then $D|_{\mathcal{I}}\equiv 0.\)
\end{lm}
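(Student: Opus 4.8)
The plan is to exploit the structure theorem~\eqref{neq} for derivations in the non-isomorphic case: since $\mathcal{G}$ and $\mathcal{I}$ are non-isomorphic as $\mathcal{G}$-modules, the derivation $D$ must have the form $D=R_a+\lambda\,\textrm{pr}_\mathcal{I}$ for some $a\in\mathcal{G}$ and $\lambda\in\mathbb{C}$. The hypothesis $D(h_0+i_0)=0$ then becomes a single equation $[h_0+i_0,a]+\lambda\,\textrm{pr}_\mathcal{I}(h_0+i_0)=0$. Using $[\mathcal{L},\mathcal{I}]=0$, the term $[i_0,a]$ vanishes, and since $\textrm{pr}_\mathcal{I}(h_0+i_0)=i_0$, this reads $[h_0,a]+\lambda i_0=0$. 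The first step is to read off components: $[h_0,a]\in\mathcal{G}$ and $\lambda i_0\in\mathcal{I}$, so both summands vanish separately, giving $[h_0,a]=0$ and $\lambda i_0=0$.

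From $[h_0,a]=0$ and the regularity of $h_0$ (its centralizer in $\mathcal{G}$ is exactly $\mathcal{H}$), we conclude $a\in\mathcal{H}$. From $\lambda i_0=0$ and $i_0\neq 0$ (it is a sum of nonzero weight vectors $i^{(0)}_\beta$, one for each $\beta\in\Gamma$, and distinct weight spaces are linearly independent, so $i_0\neq 0$) we get $\lambda=0$. Hence $D=R_a$ with $a\in\mathcal{H}$. The second step is to evaluate $D$ on $\mathcal{I}$: for any $i\in\mathcal{I}$ we have $D(i)=R_a(i)=[i,a]$, and since $a\in\mathcal{H}\subseteq\mathcal{L}$ while $[\mathcal{L},\mathcal{I}]=0$, this gives $[i,a]=0$. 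Therefore $D|_{\mathcal{I}}\equiv 0$, as claimed.

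There is essentially no obstacle here; the only point requiring a little care is the separation-of-components argument, which hinges on the decomposition $\mathcal{L}=\mathcal{G}\dot{+}\mathcal{I}$ being a direct sum of vector spaces, and on the fact that the image of $R_a$ for $a\in\mathcal{G}$ lands in $\mathcal{G}$ (because $[\mathcal{L},\mathcal{I}]=0$ forces $[x+i,a]=[x,a]\in\mathcal{G}$), while $\textrm{pr}_\mathcal{I}$ takes values in $\mathcal{I}$. Once these observations are in place the conclusion is immediate, and in fact the argument shows the stronger statement that $D=R_a$ for some $a\in\mathcal{H}$, which will presumably be needed in the subsequent lemma.
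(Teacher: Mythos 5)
There is a genuine error in your argument, and it occurs at the very first substantive step. The identity $[\mathcal{L},\mathcal{I}]=0$ says that $\mathcal{L}$ is the \emph{left} annihilator of $\mathcal{I}$, i.e. $[x,i]=0$ whenever the element of $\mathcal{I}$ sits in the \emph{right} slot of the bracket. It says nothing about $[i_0,a]$ with $i_0\in\mathcal{I}$ on the left and $a\in\mathcal{G}$ on the right: that bracket is precisely the (right) module action of $\mathcal{G}$ on $\mathcal{I}$, and it is nonzero in general --- the whole weight decomposition $[i,h]=\beta(h)i$ used throughout this section is built from it. If $[i,a]$ were always zero, $\mathcal{I}$ would be a trivial $\mathcal{G}$-module and $R_a$ would vanish on $\mathcal{I}$ for every $a$, which is incompatible with the structure of a simple non-Lie Leibniz algebra. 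Consequently your component separation is wrong: the $\mathcal{I}$-component of $D(h_0+i_0)$ is $[i_0,a]+\lambda i_0$, not $\lambda i_0$, so you cannot conclude $\lambda=0$, and your closing claim that $D=R_a$ with $a\in\mathcal{H}$ (i.e. that $\lambda=0$) is not what the hypothesis forces.

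The correct route, which is the one the paper takes, starts the same way ($D=R_a+\lambda\,\mathrm{pr}_\mathcal{I}$, separation of $\mathcal{G}$- and $\mathcal{I}$-components, $[h_0,a]=0$ hence $a\in\mathcal{H}$ by regularity), but then uses the specific choice of $i_0=\sum_{\beta\in\Gamma}i^{(0)}_\beta$ as a sum of one nonzero vector from \emph{each} weight space: the equation $[i_0,a]+\lambda i_0=\sum_{\beta\in\Gamma}\bigl(\beta(a)+\lambda\bigr)i^{(0)}_\beta=0$ forces $\beta(a)=-\lambda$ for every $\beta\in\Gamma$. Then for an arbitrary $i=\sum_{\beta\in\Gamma}i_\beta\in\mathcal{I}$ one computes $D(i)=[i,a]+\lambda i=\sum_{\beta\in\Gamma}\bigl(\beta(a)+\lambda\bigr)i_\beta=0$. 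So $D|_{\mathcal{I}}\equiv 0$ comes from a cancellation between the inner part $R_a$ and the projection part $\lambda\,\mathrm{pr}_\mathcal{I}$, not from either term vanishing separately. Your proof as written does not establish the lemma.
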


\begin{proof}
By \eqref{neq} there exist an element \(a\in \mathcal{G}\) and a
number \(\lambda\in \mathbb{C}\) such that
\(D=R_a+\lambda\textrm{pr}_\mathcal{I}.\) We have
\[
0=D(h_0+i_0)=[h_0+i_0, a]+\lambda i_0=[h_0, a]+[i_0, a]+\lambda
i_0.
\]
Since \([h_0, a]\in \mathcal{G}\) and \([i_0, a]+\lambda i_0\in
\mathcal{I},\) it follows that \([h_0, a]=0\) and \([i_0,
a]+\lambda i_0=0.\) Since \(h_0\) is a regular element, we have
that \(a\in \mathcal{H}.\) Further
\[
0=[i_0, a]+\lambda i_0=\left[\sum\limits_{\beta\in
\Gamma}i^{(0)}_\beta, a\right]+\lambda i_0=\sum\limits_{\beta\in
\Gamma}\beta(a)i^{(0)}_\beta+\lambda\sum\limits_{\beta\in
\Gamma}i^{(0)}_\beta.
\]
Thus \(\beta(a)=-\lambda\) for all \(\beta\in \Gamma.\)

Let $i$ be an arbitrary element of $I$, then it has a decomposition \(i=\sum\limits_{\beta\in \Gamma}i_\beta,\) where \(i_\beta\in
I_\beta,\, \beta\in \Gamma.\) From \(\beta(a)=-\lambda\) for all \(\beta\in \Gamma\)
we get
\begin{eqnarray*}
D(i)& = & [i, a]+\lambda i=\left[\sum\limits_{\beta\in
\Gamma}i_\beta, a\right]+\lambda\sum\limits_{\beta\in
\Gamma}i_\beta=\\
& = & \sum\limits_{\beta\in \Gamma}\beta(a)
i_\beta+\lambda\sum\limits_{\beta\in \Gamma}i_\beta=0.
\end{eqnarray*}
The proof is complete.
\end{proof}

\begin{lm}\label{fourth}
Let $D$ be a derivation on \(\mathcal{L}\) such  that
\(D(h_0+i_0)=0.\) Then \(D=0.\)
\end{lm}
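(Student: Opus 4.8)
The plan is to leverage Lemma~\ref{first} and the explicit form of derivations in the non-isomorphic case. Arguing exactly as in the proof of Lemma~\ref{first}, we have from \eqref{neq} that $D=R_a+\lambda\,\mathrm{pr}_{\mathcal{I}}$, and applying $D$ to $h_0+i_0$ and splitting into the $\mathcal{G}$- and $\mathcal{I}$-components yields $[h_0,a]=0$ (so $a\in\mathcal{H}$, since $h_0$ is regular) together with $\beta(a)=-\lambda$ for every weight $\beta\in\Gamma$. Lemma~\ref{first} already gives $D|_{\mathcal{I}}\equiv 0$, so it remains only to show $D|_{\mathcal{G}}\equiv 0$; since $\mathrm{pr}_{\mathcal{I}}$ kills $\mathcal{G}$, this amounts to proving $[x,a]=0$ for all $x\in\mathcal{G}$, i.e. $a=0$ (and then $\lambda=-\beta(a)=0$, so $D=0$).

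The first step I would carry out is to show $\lambda=0$ by a trace computation on $\mathcal{I}$. On the weight space $\mathcal{I}_\beta$ the operator $R_a$ acts as multiplication by $\beta(a)=-\lambda$, so $\mathrm{tr}\bigl(R_a|_{\mathcal{I}}\bigr)=\sum_{\beta\in\Gamma}\beta(a)\dim\mathcal{I}_\beta=-\lambda\dim\mathcal{I}$. On the other hand $a$ lies in the simple Lie algebra $\mathcal{G}=[\mathcal{G},\mathcal{G}]$, hence is a sum of brackets; as the map $b\mapsto R_b|_{\mathcal{I}}$ is (up to sign) a Lie algebra homomorphism of $\mathcal{G}$ into $\mathrm{End}(\mathcal{I})$, the operator $R_a|_{\mathcal{I}}$ is a sum of commutators of endomorphisms and therefore has trace $0$. (Equivalently, one may simply quote that the weights of a finite-dimensional module over a semisimple Lie algebra sum to zero with multiplicities.) Since $\dim\mathcal{I}\geq 1$, this forces $\lambda=0$, and consequently $\beta(a)=0$ for every $\beta\in\Gamma$.

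The second step is to upgrade $\beta(a)\equiv 0$ to $a=0$. The vanishing of all weights on $a\in\mathcal{H}$ means $a$ acts as $0$ on each $\mathcal{I}_\beta$, hence as $0$ on all of $\mathcal{I}$, so $a$ lies in the kernel of the representation of $\mathcal{G}$ on $\mathcal{I}$. That kernel is an ideal of the simple Lie algebra $\mathcal{G}$, so it is $\{0\}$ or $\mathcal{G}$; it is not $\mathcal{G}$, because $\mathcal{I}$ is a non-trivial $\mathcal{G}$-module — indeed $\mathcal{I}$ is spanned by the squares $[x,x]$ with $x=g+i\in\mathcal{L}$, and $[g+i,g+i]=[i,g]$ (using $[g,g]=0$ in $\mathcal{G}$ and $[\mathcal{L},\mathcal{I}]=0$), whence $\mathcal{I}=[\mathcal{I},\mathcal{G}]\neq\{0\}$. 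Therefore $a=0$, so $D=R_0+0\cdot\mathrm{pr}_{\mathcal{I}}=0$. The only points requiring care are these two: (i) the trace-vanishing of $R_a|_{\mathcal{I}}$, which rests on $\mathcal{G}$ being perfect and on $R$ being (anti)homomorphic, and which is what eliminates $\lambda$; and (ii) the faithfulness (equivalently non-triviality) of the $\mathcal{G}$-module $\mathcal{I}$, which is what promotes $\beta(a)\equiv 0$ to $a=0$.
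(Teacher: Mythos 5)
Your proof is correct, but it follows a genuinely different route from the paper's. The paper deduces from Lemma~\ref{first} that $D|_{\mathcal{I}}\equiv 0$, then applies the derivation identity to squares $[y,y]$ to obtain $[\mathcal{I},D(x)]=0$ for $x\in\mathcal{G}$, and uses the Leibniz identity to show that the ideal of $\mathcal{G}$ generated by $D(x)$ annihilates $\mathcal{I}$; irreducibility of $\mathcal{I}$ and simplicity of $\mathcal{G}$ then force $D(x)=0$. You instead stay entirely inside the explicit description $D=R_a+\lambda\,\textrm{pr}_{\mathcal{I}}$ of \eqref{neq}: the relation $\beta(a)=-\lambda$ for all $\beta\in\Gamma$ is upgraded to $\lambda=0$ by the trace argument (valid because $R$ is an anti-homomorphism on the ideal $\mathcal{I}$ and $\mathcal{G}=[\mathcal{G},\mathcal{G}]$, or equivalently because the weights of a module over a semisimple Lie algebra sum to zero with multiplicities), and then $a=0$ follows from faithfulness of the $\mathcal{G}$-module $\mathcal{I}$, which you correctly derive from $\mathcal{I}=[\mathcal{I},\mathcal{G}]\neq\{0\}$. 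Both arguments ultimately rest on the same fact --- that no nonzero ideal of $\mathcal{G}$ annihilates $\mathcal{I}$ --- but yours eliminates the scalar $\lambda$ by a global trace identity where the paper eliminates the whole $\mathcal{G}$-part at once by a structural squares argument; your version is more computational but is self-contained modulo standard representation theory, while the paper's avoids any appeal to the weight decomposition beyond what Lemma~\ref{first} already used. Note also that your kernel argument for $a=0$ is more robust than trying to argue that $\Gamma$ spans $\mathcal{H}^{\ast}$ directly, and it is the right way to close that step.
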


\begin{proof}  Let \(y\in \mathcal{L}\) be an arbitrary element. Since \([y, y]\in \mathcal{I},\) Lemma~\ref{first}
implies that  \(D([y, y])=0.\) The derivation  identity
\[
D([y, y])=[D(y), y]+[y, D(y)]
\]
implies that
\begin{equation}
\label{square} [D(y), y]+[y, D(y)]=0.
\end{equation}
Putting    \(y=x+i\in \mathcal{G}+\mathcal{I}\) in  \eqref{square}
we obtain that
\[
[D(x+i), x+i]+[x+i, D(x+i)]=0.
\]
Taking into account   \(D(x+i)=D(x)\) we have that
\[
[D(x), x]+[x, D(x)]+[i, D(x)]=0.
\]
Using \eqref{square} from  the last equality  we have
\([\mathcal{I}, D(x)]=0.\) Further for arbitrary elements \(z\in
\mathcal{G}\) and \(i\in \mathcal{I}\) we have
\begin{eqnarray*}
[i, [D(x), z]] & = & [[i, D(x)], z]-[[i, z], D(x)]=0
\end{eqnarray*}
and
\begin{eqnarray*}
[i, [z, D(x)]] & = & [[i, z], D(x)]-[[i, D(x)], z]=0.
\end{eqnarray*}
This means that  \([\mathcal{I}, \mathcal{G}_{D(x)}]=0,\) where
\(\mathcal{G}_{D(x)}\) is an ideal in \(\mathcal{G}\) generated by
the element \(D(x).\) Since \(\mathcal{I}\) is an irreducible
module over the simple Lie algebra \(\mathcal{G},\) we obtain that
\(\mathcal{G}_{D(x)}=\{0\},\) i.e., \(D(x)=0.\) Finally,
\(D(y)=D(x+i)=D(x)=0.\) The proof is complete.
\end{proof}

Consider a  decomposition for $\mathcal{G},$ called the
\textit{root decomposition}
$$
\mathcal{G}=\mathcal{H}\oplus \bigoplus\limits_{\alpha\in
\Phi}\mathcal{G}_\alpha,
$$
where
\begin{eqnarray*}
\mathcal{G}_\alpha & = & \{x\in \mathcal{G}:  [h, x] =
\alpha(h)x,\, \forall\,  h \in
\mathcal{H}\},\\
\Phi & = & \{\alpha \in \mathcal{H}^\ast\setminus \{0\}:
\mathcal{G}_\alpha\neq \{0\}\}.
\end{eqnarray*}
The set $\Phi$ is called the \textit{root system} of
$\mathcal{G},$ and subspaces $\mathcal{G}_\alpha$ are called the
\textit{root subspaces}.

Further if  \(\mathcal{G}\) and \(\mathcal{I}\) are  isomorphic
\(\mathcal{G}\)-modules, then  the decomposition for $\mathcal{I}$
can be written as
$$
\mathcal{I}= \mathcal{I}_0\oplus \bigoplus\limits_{\alpha\in
\Phi}\mathcal{I}_\alpha,
$$
where \(\mathcal{I}_0=\theta(\mathcal{H})\) and
\(\mathcal{I}_\beta=\theta(\mathcal{G}_\beta)\) for all \(\beta\in
\Phi.\) This follows from
\[
[\theta(x_\beta), h]=\theta([x_\beta, h])=\beta(h)\theta(x_\beta)
\]
and
\[
[\theta(h'), h]=\theta([h', h])=0
\]
for all \(h, h'\in \mathcal{H}\) and \(x_\beta\in
\mathcal{G}_\beta,\) \(\beta\in \Phi.\)

For every \(\beta\in \Phi\) take a non zero element \(x^{(0)}_\beta\in \mathcal{G}_\beta\) and put
\begin{center}
\(x_0=\sum\limits_{\beta\in \Gamma}x^{(0)}_\beta\) and
\(i_0=\theta(x_0).\)
\end{center}
It is clear that \([x^{(0)}_\beta, h]=\beta(h)x^{(0)}_\beta\) for
all \(h\in \mathcal{H}\) and \(\beta\in \Phi.\)

By \cite[Lemma~2.2]{Wang}, there exists an element $h_0\in
\mathcal{H}$ such that $\alpha(h_0) \neq \beta(h_0)$ for every
$\alpha, \beta\in \Phi, \alpha\neq \beta.$ In particular,
$\alpha(h_0) \neq 0$ for every $\alpha\in \Phi.$ Such elements
$h_0$ are called \textit{strongly regular} elements of
$\mathcal{G}.$ Again by \cite[Lemma~2.2]{Wang}, every strongly
regular element \(h_0\) is a \textit{regular element}, i.e.
$$
\{x\in \mathcal{G}: [h_0, x]=0\}=\mathcal{H}.
$$
Choose a fixed strongly regular element $h_0\in \mathcal{H}.$

\begin{lm}\label{sixth}
 Suppose that \(a, h\in
\mathcal{H},\) \(h\neq 0\) and $\lambda, \omega\in \mathbb{C}$ are
such that
\[
\omega\theta(h)+[i_0, a]+\lambda i_0=0.
\]
Then \(a=0\) and \(\lambda=\omega=0.\)
\end{lm}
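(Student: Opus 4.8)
The plan is to exploit the weight-space decomposition of $\mathcal{I}$ induced by the strongly regular element $h_0$, together with the explicit description $\theta(\mathcal{G}_\beta)=\mathcal{I}_\beta$, so that the single equation $\omega\theta(h)+[i_0,a]+\lambda i_0=0$ splits into scalar equations indexed by $\Phi\cup\{0\}$. First I would write $a=\sum_{\beta\in\Phi}a_\beta+a_0$ and $h=\sum_{\beta\in\Phi}h_\beta+h_0'$ as root/Cartan decompositions in $\mathcal{G}$; since $a,h\in\mathcal{H}$ by hypothesis, actually $a=a_0\in\mathcal{H}$ and $h=h_0'\in\mathcal{H}$, so $\theta(h)\in\mathcal{I}_0=\theta(\mathcal{H})$. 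Recalling $i_0=\theta(x_0)=\sum_{\beta\in\Phi}\theta(x^{(0)}_\beta)$ with $\theta(x^{(0)}_\beta)\in\mathcal{I}_\beta$, I would compute $[i_0,a]=\sum_{\beta\in\Phi}[\theta(x^{(0)}_\beta),a]=\sum_{\beta\in\Phi}\theta([x^{(0)}_\beta,a])=-\sum_{\beta\in\Phi}\beta(a)\theta(x^{(0)}_\beta)$, using the module property of $\theta$ and $[x^{(0)}_\beta,a]=-[a,x^{(0)}_\beta]=-\beta(a)x^{(0)}_\beta$ for $a\in\mathcal{H}$.

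Next I would substitute these expressions into the given identity and project onto the weight spaces. The component in $\mathcal{I}_0$ reads $\omega\theta(h)=0$; since $\theta$ is a linear isomorphism and $h\neq 0$, this forces $\omega=0$. For each $\beta\in\Phi$ the component in $\mathcal{I}_\beta$ reads $\bigl(-\beta(a)+\lambda\bigr)\theta(x^{(0)}_\beta)=0$, and since $\theta(x^{(0)}_\beta)\neq 0$ we get $\beta(a)=\lambda$ for every $\beta\in\Phi$. The final step is to conclude $a=0$ and $\lambda=0$ from the relations $\beta(a)=\lambda$ for all $\beta\in\Phi$. This is exactly the place where simplicity of the Lie algebra $\mathcal{G}$ enters: the root system $\Phi$ spans $\mathcal{H}^\ast$, so if all roots take the same value $\lambda$ on $a$, then in particular for any $\beta\in\Phi$ also $-\beta=w(\beta)\in\Phi$ (or one uses that $\Phi$ is not contained in an affine hyperplane off the origin), giving $\lambda=\beta(a)=-(-\beta)(a)=-\lambda$, hence $\lambda=0$; then $\beta(a)=0$ for all $\beta\in\Phi$, and since $\Phi$ spans $\mathcal{H}^\ast$ this yields $a=0$.

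I expect the main (though modest) obstacle to be phrasing the last step cleanly: one must invoke that a semisimple/simple $\mathcal{G}$ has $\Phi$ symmetric ($\Phi=-\Phi$) and spanning $\mathcal{H}^\ast$, which are standard facts from root-system theory in \cite{Hum}. Everything else is bookkeeping with the decomposition $\mathcal{I}=\mathcal{I}_0\oplus\bigoplus_{\alpha\in\Phi}\mathcal{I}_\alpha$ and the intertwining property $\theta([x,y])=[\theta(x),y]$, so no delicate estimates are needed. One should be slightly careful that the element denoted $h_0$ in the statement of Lemma~\ref{sixth} (the generic Cartan element $h$ appearing there is written $h$, while $h_0$ is the fixed strongly regular element) does not collide notationally with the summand $h_0'$ above; I would simply avoid that clash by not decomposing $h$ at all, since $h\in\mathcal{H}$ already.
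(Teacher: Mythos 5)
Your proposal is correct and follows essentially the same route as the paper: reduce the identity to scalar equations indexed by the roots, use $\Phi=-\Phi$ to force $\lambda=0$, use that $\Phi$ spans $\mathcal{H}^{\ast}$ to force $a=0$, and use injectivity of $\theta$ to force $\omega=0$. The only (cosmetic) difference is that you project directly onto the weight decomposition $\mathcal{I}=\mathcal{I}_0\oplus\bigoplus_{\beta\in\Phi}\mathcal{I}_\beta$, whereas the paper first pulls the equation back to $\mathcal{G}$ via $\theta^{-1}$ and then brackets with the strongly regular element $h_0$ to isolate the root components; both devices accomplish the same separation.
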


\begin{proof}
We have
\begin{eqnarray*}
0 & = & [i_0, a]+\lambda i_0+\omega \theta(h)=[\theta(x_0),
a]+\lambda \theta(x_0)+\omega \theta(h)=\\
& = & \theta([x_0, a])+\lambda \theta(x_0)+\omega
\theta(h)=\theta([x_0, a]+\lambda x_0+\omega h).
\end{eqnarray*}
Since \(\theta\) is an isomorphism of linear spaces
\(\mathcal{G}\) and \(\mathcal{I},\) it follows that \([x_0,
a]+\lambda x_0+\omega h=0.\) Further
\begin{eqnarray*}
0 & = & [x_0, a]+\lambda x_0+\omega h=\left[\sum\limits_{\beta\in
\Phi}x^{(0)}_\beta, a\right]+\lambda
x_0+\omega h=\\
& = & \sum\limits_{\beta\in
\Phi}\beta(a)x^{(0)}_\beta+\lambda\sum\limits_{\beta\in
\Phi}x^{(0)}_\beta+\omega h.
\end{eqnarray*}
Now we multiply both sides of this equality by the regular element $h_0\in \mathcal{H},$ then we get
$$
0=\sum\limits_{\beta\in
\Phi}\beta(a)\beta(h_0)x^{(0)}_\beta+\lambda\sum\limits_{\beta\in
\Phi}\beta(h_0)x^{(0)}_\beta.
$$
From this we obtain $\beta(a)\beta(h_0)=-\lambda\beta(h_0)$ for
all \(\beta\in \Phi.\) Since \(h_0\) is a strongly regular element
it follows that  $\beta(h_0)\neq 0$ for all  \(\beta\in \Phi,\)
then we derive that $\beta(a)=-\lambda.$ Putting in the last
equality the root \(-\beta\) instead of \(\beta\) we obtain \(\beta(a)=\lambda.\)  Thus, \(\lambda=0\) and \(\beta(a)=0\) for
all \(\beta\in \Phi.\) Since the set \(\Phi\) contains \(k=\dim
\mathcal{H}\) linearly independent elements, it follows  that
\(\Phi\) separates points of \(\mathcal{H},\) and therefore  we
get \(a=0.\) Further from
\[
\omega\theta(h)+[i_0, a]+\lambda i_0=0
\]
we obtain that \(\omega\theta(h)=0.\) Since \(\theta(h)\) is non zero, it
follows that \(\omega=0.\) The proof is complete.
\end{proof}

\begin{lm}\label{seven}
Let $D$ be a derivation on \(\mathcal{L}\) such  that
\(D(h_0+i_0)=0.\) Then \(D=0.\)
\end{lm}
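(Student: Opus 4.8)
The plan is to exploit the structural description of derivations on a simple Leibniz algebra $\mathcal{L}=\mathcal{G}\dot{+}\mathcal{I}$ in the case at hand, namely when $\mathcal{G}$ and $\mathcal{I}$ are isomorphic $\mathcal{G}$-modules, so that formula~\eqref{equal} applies: every derivation $D$ has the form $D=R_a+\omega\theta+\lambda\,\textrm{pr}_\mathcal{I}$ for some $a\in\mathcal{G}$ and $\lambda,\omega\in\mathbb{C}$. The lemma then reduces to showing that the single vanishing condition $D(h_0+i_0)=0$ forces $a=0$ and $\lambda=\omega=0$.

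First I would evaluate $D$ at the element $h_0+i_0$. Using \eqref{proj} and \eqref{theta} we have $\textrm{pr}_\mathcal{I}(h_0+i_0)=i_0$ and $\theta(h_0+i_0)=\theta(h_0)$, hence
\[
0=D(h_0+i_0)=[h_0,a]+[i_0,a]+\omega\theta(h_0)+\lambda i_0 .
\]
Splitting this identity according to the direct sum $\mathcal{L}=\mathcal{G}\dot{+}\mathcal{I}$ (note $[h_0,a]\in\mathcal{G}$ while the remaining three terms lie in $\mathcal{I}$), the $\mathcal{G}$-component gives $[h_0,a]=0$ and the $\mathcal{I}$-component gives $\omega\theta(h_0)+[i_0,a]+\lambda i_0=0$. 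Since $h_0$ is a strongly regular, hence regular, element of $\mathcal{H}$, the relation $[h_0,a]=0$ places $a$ in the Cartan subalgebra $\mathcal{H}$.

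Now with $a\in\mathcal{H}$, with $h:=h_0\in\mathcal{H}\setminus\{0\}$, and with $\omega\theta(h_0)+[i_0,a]+\lambda i_0=0$, the hypotheses of Lemma~\ref{sixth} are exactly satisfied, and that lemma yields $a=0$ and $\lambda=\omega=0$. Substituting back into \eqref{equal} gives $D=0$, which is the desired conclusion.

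The substantive work has in fact already been done inside Lemma~\ref{sixth}: that is where the root system, the module isomorphism $\theta$, and the strongly regular element $h_0$ enter, and in particular where $\theta(h_0)\neq 0$ (a consequence of $\theta$ being a linear isomorphism and $h_0\neq 0$) is used to force $\omega=0$. So the only genuine step here is the bookkeeping — plugging in $h_0+i_0$, separating $\mathcal{G}$- and $\mathcal{I}$-components, and using regularity of $h_0$ to land $a$ in $\mathcal{H}$ — which puts us precisely in the situation covered by Lemma~\ref{sixth}. In contrast to the non-isomorphic case treated in Lemmas~\ref{first}--\ref{fourth}, no additional argument involving squares $[y,y]$ is needed, since \eqref{equal} combined with Lemma~\ref{sixth} already pins down all three parameters of $D$.
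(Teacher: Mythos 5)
Your proof is correct and follows essentially the same route as the paper's: evaluate $D=R_a+\omega\theta+\lambda\,\textrm{pr}_\mathcal{I}$ at $h_0+i_0$, split into $\mathcal{G}$- and $\mathcal{I}$-components to get $[h_0,a]=0$ (hence $a\in\mathcal{H}$ by regularity) and $\omega\theta(h_0)+[i_0,a]+\lambda i_0=0$, then invoke Lemma~\ref{sixth}. Nothing is missing.
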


\begin{proof} By~\eqref{equal} there exist an
element \(a\in \mathcal{G}\) and  numbers \(\lambda, \theta\in
\mathbb{C}\) such that
\[
D=R_a+\omega \theta+\lambda\textrm{pr}_\mathcal{I}.
\]
We have
\[
0=D(h_0+i_0)=[h_0+i_0, a]+\omega\theta(h_0)+\lambda i_0=[h_0,
a]+\omega\theta(h_0)+[i_0, a]+\lambda i_0.
\]
Since \([h_0, a]\in \mathcal{G}\) and \(\omega\theta(h_0)+[i_0,
a]+\lambda i_0\in \mathcal{I},\) it follows that \([h_0, a]=0\)
and \(\omega\theta(h_0)+[i_0, a]+\lambda i_0=0.\) Since \(h_0\) is
a strongly regular element, we have that \(a\in \mathcal{H}.\) Now
Lemma~\ref{sixth} implies that \(a=0\) and \(\lambda=\omega=0.\)
This means that \(D=0.\) The proof is complete.
\end{proof}

Now we are in position to prove Theorem~\ref{thm2local}.

\textit{Proof of Theorem~\ref{thm2local}}. Let \(\Delta\) be a
2-local derivation on \(\mathcal{L}.\)  Take a derivation \(D\) on
\(\mathcal{L}\) such that
\[
\Delta(h_0+i_0)=D(h_0+i_0).
\]
Consider the 2-local derivation \(\Delta-D.\) Let \(x\in
\mathcal{L}.\)  Take a derivation \(\delta\) on \(\mathcal{L}\)
such that
\[
(\Delta-D)(h_0+i_0)=\delta(h_0+i_0),\,\, (\Delta-D)(x)=\delta(x).
\]
Since \(\delta(h_0+i_0)=0,\) by Lemmata~\ref{fourth} and
\ref{seven} we obtain that \(\delta(x) =0,\) i.e.,
\(\Delta(x)=D(x).\) This means that \(\Delta=D\) is a derivation.
 The proof is complete. $\Box$

\begin{rem} The analogues  of Lemmata~\ref{fourth} and~\ref{seven} are not true for
simple Lie algebras.

Let \(\mathcal{G}\) be a simple Lie algebra and suppose that there
exists a non zero element \(x_0\in\mathcal{G}\) such that
\[
D(x_0)=0,\, D\in \textrm{Der}(\mathcal{G})\,\Rightarrow \, D=0.
\]
Take the inner derivation \(D=R_{x_0}\) generated by the element
\(x_0.\) Then \(D(x_0)=0,\) but \(D\) is a non trivial derivation.
\end{rem}

\subsection{$2$-local derivations of nilpotent Leibniz algebras}

\

In this subsection under a certain assumption  we give a general  construction  of $2$-local derivations which are not derivations for an  arbitrary variety (not necessarily associative,  Lie or Leibniz)  of  algebras. This construction then applied to show that nilpotent Leibniz algebras always admit 2-local derivations which are not derivations.

For an arbitrary  algebra $\mathcal{L}$ with multiplication denoted as $xy$ let
\[
\mathcal{L}^2=\textrm{span}\{xy: x, y \in \mathcal{L}\}
\]
and
\[
\textrm{Ann}(\mathcal{L})=\{x\in \mathcal{L}:
xy=yx=0\,\,\textrm{for all}\,\, y\in \mathcal{L}\}.
\]
Note  that a linear operator $\delta$ on $\mathcal{L}$ such that
$\delta|_{\mathcal{L}^2}\equiv 0$ and
$\delta(\mathcal{L})\subseteq \textrm{Ann}(\mathcal{L})$ is a
derivation. Indeed, for every $x,y\in \mathcal{L}$ we have
$$
\delta(xy)=0=\delta(x)y+x\delta(y).
$$

\begin{thm}\label{pure}
Let $\mathcal{L}$ be a $n$-dimensional  algebra with $n\geq2.$
Suppose that
\begin{itemize}
\item[(i)] $\dim\mathcal{L}^2\leq n-2;$
\item[(ii)] the annihilator
 $\textrm{Ann}(\mathcal{L})$ of $\mathcal{L}$ is non trivial.
\end{itemize}
Then $\mathcal{L}$ admits a $2$-local derivation which is not a
derivation.
\end{thm}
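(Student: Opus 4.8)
The plan is to construct an explicit $2$-local derivation on $\mathcal{L}$ that agrees pointwise with a derivation of the simple ``zero-multiplication'' type $\delta$ noted just above the statement (those linear maps killing $\mathcal{L}^2$ and landing in $\textrm{Ann}(\mathcal{L})$), but which is globally a \emph{nonlinear} map, hence not itself a derivation. Fix a nonzero element $c\in \textrm{Ann}(\mathcal{L})$ using hypothesis (ii). Because $\dim \mathcal{L}^2\le n-2$ by hypothesis (i), the subspace $\mathcal{L}^2$ has codimension at least $2$, so I can choose a linear functional $f\colon \mathcal{L}\to\mathbb{C}$ with $f|_{\mathcal{L}^2}\equiv 0$; in fact I will want \emph{two} linearly independent such functionals, say $f_1,f_2$, which exist precisely because the codimension is $\ge 2$. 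The candidate $2$-local derivation is then something like
\[
\Delta(x)=\varphi\big(f_1(x),f_2(x)\big)\,c,
\]
where $\varphi\colon\mathbb{C}^2\to\mathbb{C}$ is a suitably chosen nonlinear function (for instance homogeneous of degree one along every line through the origin but not additive — e.g. $\varphi(s,t)=s^2/t$ extended by $0$ where $t=0$, or a similar ``radial'' gadget). The key design requirement is: $\varphi$ is not linear, but for any two prescribed points $p,q\in\mathbb{C}^2$ there is a linear functional on $\mathbb{C}^2$ agreeing with $\varphi$ at both $p$ and $q$.

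First I would verify that $\Delta$ is a genuine $2$-local derivation. Given $x,y\in\mathcal{L}$, the pairs $(f_1(x),f_2(x))$ and $(f_1(y),f_2(y))$ are two points of $\mathbb{C}^2$; by the defining property of $\varphi$ there is a linear functional $g$ on $\mathbb{C}^2$ with $g(f_1(x),f_2(x))=\varphi(f_1(x),f_2(x))$ and $g(f_1(y),f_2(y))=\varphi(f_1(y),f_2(y))$. Then the linear map $D_{x,y}(z):=g\big(f_1(z),f_2(z)\big)\,c$ satisfies $D_{x,y}|_{\mathcal{L}^2}\equiv 0$ (since $f_1,f_2$ kill $\mathcal{L}^2$) and $D_{x,y}(\mathcal{L})\subseteq \mathbb{C}c\subseteq\textrm{Ann}(\mathcal{L})$, so by the remark preceding the theorem $D_{x,y}\in\textrm{Der}(\mathcal{L})$; and by construction $D_{x,y}(x)=\Delta(x)$, $D_{x,y}(y)=\Delta(y)$. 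Next I would check $\Delta$ is not a derivation: if it were, it would in particular be linear, forcing $\varphi$ to be linear on the image of $(f_1,f_2)\colon\mathcal{L}\to\mathbb{C}^2$, which is all of $\mathbb{C}^2$ because $f_1,f_2$ are linearly independent; this contradicts the nonlinearity of $\varphi$. (One can alternatively exhibit explicit $x$ with $\Delta(x+y)\ne\Delta(x)+\Delta(y)$.)

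The one genuinely delicate point — the step I expect to be the main obstacle — is producing the scalar function $\varphi\colon\mathbb{C}^2\to\mathbb{C}$ with the two conflicting properties: \emph{(a)} not linear, yet \emph{(b)} for every pair of points of $\mathbb{C}^2$ some linear functional interpolates $\varphi$ at both points. Property (b) says every two points of the graph of $\varphi$ lie on a common affine hyperplane through the origin of $\mathbb{C}^2\times\mathbb{C}$, i.e. on a common linear functional. A clean way to arrange this is to take $\varphi$ constant on each line through the origin and to let its value be a $\mathbb{C}$-homogeneous-degree-one but not additive function of the direction; concretely one can mimic the one-dimensional trick from~\cite{AKR15,AK2016} used for nilpotent Lie algebras, pushing it through the functional $f_1$ alone if one arranges the second free direction to absorb the non-additivity. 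If a slick closed form is awkward, I would instead define $\varphi$ piecewise: fix a basis $e_1,e_2$ of $\mathbb{C}^2$, set $\varphi=0$ on the line $\mathbb{C}e_2$, and on the complement use $\varphi(se_1+te_2)=s\cdot\psi(t/s)$ for a non-linear $\psi\colon\mathbb{C}\to\mathbb{C}$ — then verifying (b) reduces to the observation that any two points with the same direction obviously lie on a line through $0$, and any two points with different directions $\tfrac{t_1}{s_1}\ne\tfrac{t_2}{s_2}$ impose two independent linear conditions on the (two-dimensional space of) linear functionals, hence are always solvable. Once $\varphi$ is in hand, everything else is the routine bookkeeping sketched above.
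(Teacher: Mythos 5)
Your proposal is correct and is essentially the paper's own argument: the paper decomposes $\mathcal{L}=\mathcal{L}^2\oplus V$ with $\dim V\ge 2$ and uses the coordinate functionals of two basis vectors of $V$ (your $f_1,f_2$), the exact homogeneous non-additive gadget $\varphi(s,t)=s^2/t$ extended by $0$, and a fixed nonzero element of $\textrm{Ann}(\mathcal{L})$, verifying $2$-locality by solving the same $2\times 2$ linear system (solvable by degree-one homogeneity when the two points are proportional, and by invertibility otherwise). The one step you flagged as delicate is handled in the paper exactly as in your fallback construction, so no changes are needed.
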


\begin{proof}  Let us consider a decomposition of  $\mathcal{L}$ in the following
form
$$
\mathcal{L}=\mathcal{L}^2\oplus V.
$$
Due to $\dim\mathcal{L}^2\leq n-2,$ we have $\dim V=k\geq 2.$ Let
$\{e_1,\ldots, e_k\}$ be a basis of $V.$

 Let us define a homogeneous non
additive function $f$ on $\mathbb{C}^2$ as follows
\begin{displaymath}
f(y_1, y_2) = \left\{ \begin{array}{ll}
\frac{\textstyle y_1^2}{\textstyle  y_2}, & \textrm{if\,\, $y_2\neq 0$}\\
0, & \textrm{if\,\, $y_2= 0,$}
\end{array} \right.
\end{displaymath}
where $(y_1, y_2)\in \mathbb{C}.$

Let us fix a non zero element  $z\in \textrm{Ann}(\mathcal{L}).$
Define an operator $\Delta$ on $\mathcal{L}$ by
$$
\Delta(x)=f(\lambda_1,\lambda_2)z,\, \
x=x_1+\sum\limits_{i=1}^k\lambda_i e_i,
$$
where $\lambda_i\in \mathbb{C},$ $i=1,\ldots,k,$ $x_1\in
\mathcal{L}^2.$ The operator $\Delta$ is not a derivation since it
is not linear.

Let us now show that $\Delta$ is a $2$-local derivation.  Define a
linear operator $\delta$ on $\mathcal{L}$ by
$$
\delta(x)=(a\lambda_1+b\lambda_2)z,\, \ x=x_1+\sum\limits_{i=1}^k
\lambda_i e_i,
$$
where $a, b\in \mathbb{C}.$ Since $\delta|_{\mathcal{L}^2}\equiv
0$ and $\delta(\mathcal{L})\subseteq \textrm{Ann}(\mathcal{L})$
the operator $\delta$ is a derivation.

Let $x=x_1+\sum\limits_{i=1}^k \lambda_i e_i$ and
$y=y_1+\sum\limits_{i=1}^k \mu_i e_i$ be elements of
$\mathcal{L}.$ We choose $a$ and $b$ such that
$$
\Delta(x)=\delta(x),\,\, \Delta(y)=\delta(y).
$$
Let us rewrite the above equalities  as system of linear equations
with respect to unknowns $a, b$ as follows
\begin{displaymath}
\left\{ \begin{array}{ll}
\lambda_1 a+\lambda_2 b & = f(\lambda_1,\lambda_2) \\
\mu_1 a+\mu_2 b & = f(\mu_1,\mu_2)
\end{array} \right.
\end{displaymath}
Since the function $f$ is homogeneous the system has non trivial
solution. Therefore, $\Delta$ is a $2$-local derivation, as
required. The proof is complete.
\end{proof}

Given a  Leibniz algebra $\mathcal{L}$, we define the
\emph{lower central sequence} defined recursively as
$$\mathcal{L}^1=\mathcal{L}, \ \mathcal{L}^{k+1}=[\mathcal{L}^k,\mathcal{L}],   \ \ k \geq 1.$$

\begin{defn} A Leibniz algebra $\mathcal{L}$ is said to be nilpotent, if there exists $t\in\mathbb N$ such that $\mathcal{L}^{t}=\{0\}$. The minimal number $t$ with such property is said to be the index of nilpotency of the algebra $\mathcal{L}$.
\end{defn}

Since for an nilpotent algebra we have $\dim \mathcal{L}^2\leq n-1$, the index of nilpotency of an
$n$-dimensional nilpotent Leibniz algebra is not greater than
$n+1$.

A Leibniz algebra $\mathcal{L}$ is called \textit{null-filiform}
if $\dim \mathcal{L}^k=n+1-k$ for $1\leq k \leq n+1.$

Clearly, a null-filiform Leibniz algebra has maximal index of nilpotency. Moreover, it is easy to show that a nilpotent Leibniz algebra is null-filiform if and only if it is a one-generated algebra (see  \cite{AO01}) . Note that this notion has no sense in the  Lie algebras case, because Lie algebras  are at least two-generated.

For every nilpotent Leibniz algebra with
nilindex equal to $t$  we have that
$\{0\}\neq\mathcal{L}^{t-1}\subseteq \textrm{Ann}(\mathcal{L}).$ It is known  \cite{AO01} that up to isomorphism there exists a unique $n$-dimensional nilpotent Leibniz which satisfies the  condition $\dim \mathcal{L}^2=n-1$ which  is the null-filiform algebra, i.e. $\dim \mathcal{L}^2\leq n-2$ for all nilpotent Leibniz algebras except the null-filiform. Therefore Theorem~\ref{pure} implies the following result.

\begin{cor}\label{nil}
Let $\mathcal{L}$ be a finite-dimensional non null-filiform nilpotent   Leibniz algebra    with $\dim \mathcal{L}\geq 2.$ Then $\mathcal{L}$ admits a $2$-local derivation which is not a derivation.
\end{cor}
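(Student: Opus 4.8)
The plan is to deduce this immediately from Theorem~\ref{pure}, so the real work is only to check that the two hypotheses of that theorem hold for every finite-dimensional non null-filiform nilpotent Leibniz algebra $\mathcal{L}$ with $n = \dim \mathcal{L} \geq 2$. First I would record the assumed fact from \cite{AO01} that, up to isomorphism, the null-filiform algebra is the \emph{unique} $n$-dimensional nilpotent Leibniz algebra satisfying $\dim \mathcal{L}^2 = n-1$; since for any nilpotent algebra one always has $\dim \mathcal{L}^2 \leq n-1$, it follows that every non null-filiform nilpotent Leibniz algebra satisfies the strict inequality $\dim \mathcal{L}^2 \leq n-2$, which is exactly hypothesis~(i) of Theorem~\ref{pure}.

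Next I would verify hypothesis~(ii), that $\textrm{Ann}(\mathcal{L}) \neq \{0\}$. Here I would invoke the observation already made in the excerpt: if $\mathcal{L}$ is nilpotent with index of nilpotency $t$, then $\{0\} \neq \mathcal{L}^{t-1} \subseteq \textrm{Ann}(\mathcal{L})$. Indeed $\mathcal{L}^{t-1} \neq \{0\}$ by minimality of $t$, and $[\mathcal{L}^{t-1}, \mathcal{L}] = \mathcal{L}^t = \{0\}$ shows $\mathcal{L}^{t-1}$ is annihilated on the right; left multiplications $[\mathcal{L}, \mathcal{L}^{t-1}] = \{0\}$ follow since $[\mathcal{L}, \mathcal{L}] \subseteq \mathcal{L}^2$ and, more directly, because $\mathcal{L}^{t-1} \subseteq \mathcal{L}^2 \subseteq \mathcal{I}$ and $[\mathcal{L}, \mathcal{I}] = 0$ by the general fact about $\mathcal{I}$ recalled in Section~2. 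So $\textrm{Ann}(\mathcal{L})$ is non trivial. For $n \geq 2$ this gives both hypotheses of Theorem~\ref{pure}, and that theorem produces a $2$-local derivation of $\mathcal{L}$ which is not a derivation, completing the proof.

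There is no genuine obstacle here; the corollary is a packaging of Theorem~\ref{pure} together with the classification input from \cite{AO01}. The only point that requires a moment's care is making sure the exceptional case — the null-filiform algebra — is precisely the one excluded, i.e. that the dichotomy "$\dim \mathcal{L}^2 = n-1 \iff$ null-filiform" is used correctly, so that "non null-filiform" is equivalent to $\dim \mathcal{L}^2 \leq n-2$ among nilpotent Leibniz algebras. Given that, the statement follows directly.
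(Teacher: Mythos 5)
Your proof is correct and is essentially the paper's own argument: the corollary is obtained by feeding the classification fact from the cited work of Ayupov--Omirov (which gives hypothesis (i), $\dim\mathcal{L}^{2}\leq n-2$) and the inclusion $\{0\}\neq\mathcal{L}^{t-1}\subseteq\mathrm{Ann}(\mathcal{L})$ (hypothesis (ii)) into Theorem~\ref{pure}. One caveat on your side-justification of left annihilation: the step $\mathcal{L}^{t-1}\subseteq\mathcal{L}^{2}\subseteq\mathcal{I}$ is wrong, because the inclusion between $\mathcal{L}^{2}$ and $\mathcal{I}$ goes the other way ($\mathcal{I}\subseteq\mathcal{L}^{2}$; for instance a non-abelian nilpotent Lie algebra has $\mathcal{I}=\{0\}$ but $\mathcal{L}^{2}\neq\{0\}$). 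The correct reason is the standard fact $[\mathcal{L}^{i},\mathcal{L}^{j}]\subseteq\mathcal{L}^{i+j}$ for the lower central series of a Leibniz algebra, which yields $[\mathcal{L},\mathcal{L}^{t-1}]\subseteq\mathcal{L}^{t}=\{0\}$ when $t\geq3$, the case $t=2$ being abelian.
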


\begin{rem} Let us show that  every  $2$-local derivation of the algebra $NF_n$ is a derivation.
\end{rem}

It is know that the unique null-filiform algebra, denoted by $NF_n$, admits a basis $\{e_1, e_2, \dots, e_n\}$ in which its the table of multiplications is the  following (see  \cite{AO01}):
$$
[e_i,e_1]=e_{i+1}, \ 1\leq i \leq n.
$$

Let $D\in Der(NF_n)$ and $D(e_1)=\sum\limits_{i=1}^{n}\alpha_i
e_i$ for some $\alpha_i\in \mathbb{C}$, then it is easy to check
that
\begin{eqnarray}\label{dernf}
D(e_j)=j \alpha_1 e_j +\sum_{i=2}^{n-j+1}\alpha_ie_{j+i-1}, \
2\leq j \leq n.
\end{eqnarray}

Let $\Delta$ be a 2-local derivation. For the elements $e_1, x,
y\in NF_n$  there exist derivations  $D_{e_1, x}$ and
$D_{e_1, y}$ such that
$$
\Delta(e_1)=D_{e_1, x}(e_1)=D_{e_1,
y}(e_1)=\sum\limits_{i=1}^{n}\alpha_i e_i.
$$
From \eqref{dernf} we conclude that each derivation  on $NF_n$ is
uniquely defined by its value on the element~$e_1.$ Therefore, $D_{e_1,
x}(z)=D_{e_1, y}(z)$ for any $z\in NF_n$. Thus, we obtain that if
$\Delta(e_1)=D_{e_1, x}(e_1)$ for some $D_{e_1, x}$, then
$\Delta(z)=D_{e_1, x}(z)$ for any $z\in NF_n,$ i.e., $\Delta$ is a
derivation.

\section{Local derivations on  Leibniz algebras}
\label{four}

\subsection{Local derivations on simple Leibniz algebras}

\

Now we shall give the main result concerning local derivations on
simple Leibniz algebras.

\begin{thm}\label{thmlocal}
Let $\mathcal{L}$ be a simple complex Leibniz algebra. Then any
local derivation on  $\mathcal{L}$ is  a derivation.
\end{thm}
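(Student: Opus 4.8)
The plan is to reduce the statement to the theorem of \cite{AK2016} that every local derivation of a simple Lie algebra is a derivation, and then to control $\Delta$ on the ideal $\mathcal{I}$ by invoking Schur's Lemma after examining the weight-space decompositions of $\mathcal{I}$ with respect to \emph{all} Cartan subalgebras of $\mathcal{G}$ at once. Write $\mathcal{L}=\mathcal{G}\dot{+}\mathcal{I}$ and split into the two cases of~\eqref{neq} and~\eqref{equal}. First I would make three normalisations. Since every derivation of $\mathcal{L}$ carries $\mathcal{I}$ into $\mathcal{I}$ (immediate from~\eqref{neq},~\eqref{equal} and $\theta|_{\mathcal{I}}=0$), so does $\Delta$. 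Next, for $x\in\mathcal{G}$ one has $\textrm{pr}_{\mathcal{G}}(\Delta(x))=[x,a_x]$, the value at $x$ of an inner derivation of $\mathcal{G}$; hence $\textrm{pr}_{\mathcal{G}}\circ\Delta|_{\mathcal{G}}$ is a linear local derivation of the simple Lie algebra $\mathcal{G}$, and by \cite{AK2016} it equals $R_c|_{\mathcal{G}}$ for some $c\in\mathcal{G}$. Replacing $\Delta$ by $\Delta-R_c$ we may assume $\Delta(\mathcal{G})\subseteq\mathcal{I}$. In the isomorphic case this forces $\Delta(x)=\omega_x\theta(x)$ for each $x\in\mathcal{G}$, so $\theta^{-1}\circ\Delta|_{\mathcal{G}}$ sends every vector to a scalar multiple of itself and therefore equals $\omega\,\mathrm{id}_{\mathcal{G}}$; subtracting the derivation $\omega\theta$ we may assume in addition $\Delta|_{\mathcal{G}}=0$ (in the non-isomorphic case $\Delta|_{\mathcal{G}}=0$ already). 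So from now on $\Delta|_{\mathcal{G}}=0$, $\Delta(\mathcal{I})\subseteq\mathcal{I}$, and $\Delta$ is still a local derivation.

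The key claim is that $\Delta$ preserves every weight subspace of $\mathcal{I}$ relative to every Cartan subalgebra $\mathcal{H}'$ of $\mathcal{G}$. Fix such an $\mathcal{H}'$, write $\mathcal{I}=\bigoplus_{\gamma}\mathcal{I}^{\mathcal{H}'}_{\gamma}$, choose a regular $h_0'\in\mathcal{H}'$, and take $0\ne i\in\mathcal{I}^{\mathcal{H}'}_{\gamma}$. Applying the local property at $h_0'+i$ and using $\Delta(h_0')=0$: the $\mathcal{G}$-component of the realising derivation $R_a\,(+\,\omega\theta)+\lambda\,\textrm{pr}_{\mathcal{I}}$ gives $[h_0',a]=0$, hence $a\in\mathcal{H}'$ since $h_0'$ is regular, and the $\mathcal{I}$-component gives $\Delta(i)=[i,a]+\lambda i$ in the non-isomorphic case and $\Delta(i)=[i,a]+\omega\theta(h_0')+\lambda i$ in the isomorphic case, where $\theta(h_0')\in\mathcal{I}^{\mathcal{H}'}_{0}$. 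For $\gamma=0$ this already gives $\Delta(i)\in\mathcal{I}^{\mathcal{H}'}_{0}$, and for $\gamma\ne0$ it gives $\Delta(i)\in\mathcal{I}^{\mathcal{H}'}_{\gamma}$ in the non-isomorphic case and $\Delta(i)\in\mathcal{I}^{\mathcal{H}'}_{\gamma}\oplus\mathcal{I}^{\mathcal{H}'}_{0}$ in the isomorphic one. In this last situation I would additionally apply the local property at $e'+i$ with $0\ne e'\in\mathcal{G}^{\mathcal{H}'}_{\gamma}$ ($\gamma$ is a root because $\mathcal{I}\cong\mathcal{G}$): the realising derivation has $a\in\textrm{Cent}_{\mathcal{G}}(e')$, which is $\mathcal{H}'$-stable and meets $\mathcal{G}^{\mathcal{H}'}_{-\gamma}$ trivially (because $[\mathcal{G}^{\mathcal{H}'}_{-\gamma},e']\ne0$), so $a$ has no $\mathcal{G}^{\mathcal{H}'}_{-\gamma}$-component; a weight count then shows that $[i,a]+\omega'\theta(e')+\lambda' i$ has no $\mathcal{I}^{\mathcal{H}'}_{0}$-component, whence $\Delta(i)\in\mathcal{I}^{\mathcal{H}'}_{\gamma}$.

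Granting the claim, $\Delta|_{\mathcal{I}}$ commutes with the action on $\mathcal{I}$ of every element of every Cartan subalgebra of $\mathcal{G}$ (such an element acts as a scalar on each weight subspace that $\Delta$ preserves). The semisimple elements of $\mathcal{G}$ are exactly the union of its Cartan subalgebras, and they span $\mathcal{G}$; therefore $\Delta|_{\mathcal{I}}$ commutes with the whole action of $\mathcal{G}$, i.e. it is a $\mathcal{G}$-module endomorphism of the irreducible module $\mathcal{I}$. By Schur's Lemma (Theorem~\ref{SL}(ii)), $\Delta|_{\mathcal{I}}=\lambda\,\mathrm{id}_{\mathcal{I}}$ for some $\lambda\in\mathbb{C}$. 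Together with $\Delta|_{\mathcal{G}}=0$ this means $\Delta=\lambda\,\textrm{pr}_{\mathcal{I}}$, which is a derivation; undoing the normalisations shows that the original $\Delta$ equals $R_c+\lambda\,\textrm{pr}_{\mathcal{I}}$, respectively $R_c+\omega\theta+\lambda\,\textrm{pr}_{\mathcal{I}}$, and in particular is a derivation.

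The routine parts are the three normalisations of the first paragraph and the weight bookkeeping of the second. The step I expect to cause the most trouble is the isomorphic case of the claim: there the extra derivation $\theta$ injects a parasitic component into $\mathcal{I}^{\mathcal{H}'}_{0}$ which has to be removed by the auxiliary element $e'+i$. Conceptually, the crucial point is that a \emph{single} Cartan subalgebra only forces $\Delta|_{\mathcal{I}}=R_a|_{\mathcal{I}}+\lambda\,\mathrm{id}$ for some fixed $a\in\mathcal{H}$, which is a derivation of $\mathcal{L}$ only when $a=0$; it is precisely by running the argument over all Cartan subalgebras simultaneously that one upgrades this to the module-endomorphism property and hence to a scalar.
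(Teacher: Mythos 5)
Your argument is correct, and after the shared first step it takes a genuinely different route from the paper's. Both proofs begin identically: $\mathrm{pr}_{\mathcal{G}}\circ\Delta|_{\mathcal{G}}$ is a local derivation of the simple Lie algebra $\mathcal{G}$, hence inner by \cite{AK2016}, and subtracting $R_c$ (and, in the isomorphic case, $\omega\theta$, obtained exactly as in the first half of Lemma~\ref{GeqI}) reduces to a local derivation vanishing on $\mathcal{G}$ and mapping $\mathcal{I}$ into $\mathcal{I}$. From there the paper fixes a single Cartan subalgebra, shows $\Delta$ acts as a scalar $\lambda_i$ on each weight vector, and then equalises these scalars by hand in Lemma~\ref{GeqII} through a delicate choice of test elements and a case split on whether the difference of two weights is a root (using highest-weight generation and root-string facts), while Lemma~\ref{GeqI} does an analogous explicit basis computation in the isomorphic case. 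You instead run the weight-space argument over \emph{all} Cartan subalgebras: a single test at $h_0'+i$ shows $\Delta$ preserves each $\mathcal{H}'$-weight space of $\mathcal{I}$ (with the auxiliary test at $e'+i$ correctly disposing of the parasitic $\omega\theta(h_0')\in\mathcal{I}^{\mathcal{H}'}_0$ term in the isomorphic case, since the centraliser of $e'$ is $\mathcal{H}'$-graded and misses $\mathcal{G}^{\mathcal{H}'}_{-\gamma}$), so $\Delta|_{\mathcal{I}}$ commutes with every semisimple element of $\mathcal{G}$; as these span $\mathcal{G}$, Schur's Lemma (Theorem~\ref{SL}(ii)) forces $\Delta|_{\mathcal{I}}=\lambda\,\mathrm{id}_{\mathcal{I}}$. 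Your version is shorter and treats the two cases almost uniformly, at the price of invoking two standard but nontrivial facts (every semisimple element lies in a Cartan subalgebra, and the semisimple elements span $\mathcal{G}$ -- e.g.\ because the regular semisimple ones are Zariski-dense); the paper's version is more computational but entirely self-contained within a single fixed root decomposition. One cosmetic remark: your closing comment that a single Cartan only yields $\Delta|_{\mathcal{I}}=R_a|_{\mathcal{I}}+\lambda\,\mathrm{id}$ is slightly loose (the scalars a priori vary from weight vector to weight vector), but this is motivation, not a step of the proof, and does not affect correctness.
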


Let us first consider a simple Leibniz algebra
\(\mathcal{L}=\mathcal{G}\dot{+}\mathcal{I}\) such that
\(\mathcal{G}\) and \(\mathcal{I}\) are  isomorphic
\(\mathcal{G}\)-modules.

\begin{lm}\label{GeqI}
Let \(\Delta\) be a local derivation  on \(\mathcal{L}\) such that
\(\Delta\) maps \(\mathcal{L}\) into \(\mathcal{I}.\)  Then
\(\Delta\)  is  a derivation.
\end{lm}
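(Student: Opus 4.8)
The plan is to exploit the explicit description of derivations on $\mathcal{L}$ given in \eqref{equal} together with the module structure of $\mathcal{I}$. Let $\Delta$ be a local derivation mapping $\mathcal{L}$ into $\mathcal{I}$. The key structural observation is that, by \eqref{equal}, every derivation $D$ on $\mathcal{L}$ has the form $D=R_a+\omega\theta+\lambda\,\textrm{pr}_{\mathcal{I}}$; when we apply it to an element $x\in\mathcal{G}$ and project onto $\mathcal{I}$, we get $\textrm{pr}_{\mathcal{I}}(D(x))=\textrm{pr}_{\mathcal{I}}([x,a])+\omega\theta(x)+\lambda\,\textrm{pr}_{\mathcal{I}}(x)$. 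Since $[x,a]\in\mathcal{G}$ for $x,a\in\mathcal{G}$ and $\textrm{pr}_{\mathcal{I}}(x)=0$ for $x\in\mathcal{G}$, we obtain that $D(x)=\omega\theta(x)$ for $x\in\mathcal{G}$ precisely when $D$ maps $\mathcal{G}$ into $\mathcal{I}$ (note $[x,a]\in\mathcal{I}$ forces $[x,a]=0$). Similarly, on $\mathcal{I}$ every derivation acts as $i\mapsto[i,a]+\lambda i$ (the $\theta$-term vanishes on $\mathcal{I}$ since $\theta$ is defined via \eqref{theta} to kill the $\mathcal{I}$-component), i.e. as an element of $\mathcal{H}$-eigen-action plus a scalar.

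First I would show that the restriction $\Delta|_{\mathcal{G}}$ is a $\mathcal{G}$-module homomorphism from $\mathcal{G}$ to $\mathcal{I}$. For each $x\in\mathcal{G}$ there is a derivation $D_x$ with $\Delta(x)=D_x(x)$, and since $\Delta(x)\in\mathcal{I}$ while $D_x(x)=[x,a_x]+\omega_x\theta(x)+\lambda_x\,\textrm{pr}_{\mathcal{I}}(x)$ with $[x,a_x]\in\mathcal{G}$, $\textrm{pr}_{\mathcal{I}}(x)=0$, we must have $[x,a_x]=0$ and $\Delta(x)=\omega_x\theta(x)$. Thus $\Delta|_{\mathcal{G}}=\theta\circ T$ where $T:\mathcal{G}\to\mathcal{G}$ is a local scalar operator (i.e. $T(x)\in\mathbb{C}x$ for each $x$). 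A standard linear-algebra argument on a simple Lie algebra — using that $\mathcal{G}$ is spanned by elements $x,y,x+y$ with $x,y,x+y$ all nonzero and the scalars must agree — forces $T=\mu\,\textrm{id}$ for a single scalar $\mu$, hence $\Delta|_{\mathcal{G}}=\mu\theta|_{\mathcal{G}}$. Subtracting the genuine derivation $\mu\theta$, we may assume $\Delta|_{\mathcal{G}}=0$.

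Next I would analyze $\Delta|_{\mathcal{I}}$. For each $i\in\mathcal{I}$ there is a derivation with $\Delta(i)=[i,a_i]+\lambda_i i$, so $\Delta(i)$ lies in the subspace spanned by $i$ together with $\{[i,h]:h\in\mathcal{H}\}$-type motions; more usefully, passing to the weight decomposition $\mathcal{I}=\bigoplus_{\alpha}\mathcal{I}_\alpha$, one checks that $\Delta$ preserves each weight space and acts as a scalar on it. Then, using that $\Delta|_{\mathcal{G}}=0$ together with the derivation-type constraint inherited from local derivations (apply $\Delta$ to brackets $[i,x]$ with $i\in\mathcal{I}$, $x\in\mathcal{G}$, realized via a single derivation on that pair — or rather use a two-element pencil), I would show the scalars on different weight spaces coincide, so $\Delta|_{\mathcal{I}}=\nu\,\textrm{id}_{\mathcal{I}}=\nu\,\textrm{pr}_{\mathcal{I}}$, which is a derivation. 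Combining, $\Delta=\mu\theta+\nu\,\textrm{pr}_{\mathcal{I}}$ is a derivation.

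The main obstacle I anticipate is the step forcing the per-weight-space (or per-element) scalars to be globally constant: a local derivation only promises agreement with \emph{some} derivation at each single point, so pinning the scalar down requires linking different weight spaces, and the natural linking brackets $[\mathcal{I},\mathcal{I}]=0$ are useless — one must instead use brackets of the form $[i,x]$ with $x\in\mathcal{G}$ acting on $\mathcal{I}$, exploiting irreducibility of $\mathcal{I}$ as a $\mathcal{G}$-module and the Schur-type rigidity from Theorem \ref{SL}. Handling the zero weight space $\mathcal{I}_0=\theta(\mathcal{H})$ (where the eigen-action is trivial) needs slightly more care, and I would treat it by choosing test elements of the form $i_0+(\text{nonzero weight vector})$ so that a single derivation's scalar $\lambda$ is simultaneously constrained.
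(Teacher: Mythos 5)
Your first step is correct and is essentially the paper's: for $x\in\mathcal{G}$ the constraint $\Delta(x)\in\mathcal{I}$ kills the inner part $[x,a_x]\in\mathcal{G}$, so $\Delta|_{\mathcal{G}}=\theta\circ T$ with $T$ locally scalar, hence $\Delta(x)=\omega\,\theta(x)$ for a single $\omega$. The gap is in your treatment of $\Delta|_{\mathcal{I}}$. For a single element $i\in\mathcal{I}$ the local-derivation condition only gives $\Delta(i)=[i,a_i]+\lambda_i i$ with $a_i\in\mathcal{G}$ completely unconstrained, because $[i,a_i]$ already lies in $\mathcal{I}$ and the hypothesis $\Delta(\mathcal{L})\subseteq\mathcal{I}$ is vacuous on elements of $\mathcal{I}$. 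Hence your assertion that ``$\Delta$ preserves each weight space and acts as a scalar on it'' does not follow: $[i_\beta,a_i]$ can have components in many weight spaces. Your proposed fixes do not close this: test elements of the form $i_0+(\text{nonzero weight vector})$ still lie entirely inside $\mathcal{I}$, so they impose no restriction on $a$; and ``a single derivation realized on the pair'' $(i,x)$ is the 2-local property, which a local derivation does not provide. Schur's lemma cannot be invoked either, since $\Delta|_{\mathcal{I}}$ being a $\mathcal{G}$-module endomorphism is exactly what has to be proved.

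The missing idea --- and the one the paper uses --- is to exploit the module isomorphism $\theta$ by testing $\Delta$ at the mixed elements $x_i+y_i$ with $y_i=\theta(x_i)$, and at $x_i+x_j+y_i+y_j$. For such an element the requirement $\Delta(x_i+y_i)\in\mathcal{I}$ forces $[x_i,a]=0$, and then the intertwining relation $[y_i,a]=[\theta(x_i),a]=\theta([x_i,a])=0$ kills the inner part on the $\mathcal{I}$-component as well, yielding $\Delta(x_i+y_i)\in\mathbb{C}y_i$ and hence $\Delta(y_i)\in\mathbb{C}y_i$; the four-term elements then force all these scalars to coincide. No weight-space analysis is needed in this (isomorphic-modules) case; the weight and root bookkeeping you sketch is closer to what the paper does in the non-isomorphic case (Lemma~\ref{GeqII}), where a different linking device ($h_0+y_i$ with $h_0$ strongly regular) is available.
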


\begin{proof}

Fix a basis \(\{x_1, \ldots, x_m\}\) in \(\mathcal{G}.\) In this
case the system of vectors \(\{y_i: y_i=\theta(x_i), i\in
\overline{1, m}\}\) is a basis in \(\mathcal{I},\) where
\(\theta\) is a module isomorphism of \(\mathcal{G}\)-modules
\(\mathcal{G}\) and~\(\mathcal{I},\) in particular,  \(\theta([x,
y])=[\theta(x), y]\) for all \(x, y\in \mathcal{G}.\)

For an element  \(x=x_i \,(i\in \overline{1, m})\) take an element
\(a_i\in \mathcal{G}\) and a number \(\omega_i\in \mathbb{C}\)
such that
\[
\Delta(x_i)=[x_i, a_i]+\omega_i\theta(x_i).
\]
Since \(\Delta(x_i)\in \mathcal{I}\) and \([x_i, a_i]\in
\mathcal{G},\) it follows that  \([x_i, a_i]=0.\) Thus
\[
\Delta(x_i)=\omega_i\theta(x_i)=\omega_i y_i.
\]

Now for the element  \(x=x_i+x_j,\) where \(i\neq j,\) take an
element \(a_{i,j}\in \mathcal{G}\) and a number \(\omega_{i,j}\in
\mathbb{C}\) such that
\[
\Delta(x_i+x_j)=[x_i+x_j, a_{i,j}]+\omega_{i, j}\theta(x_i+x_j)\in
\mathcal{I}.
\]
Then \([x_i+x_j, a_{i,j}]=0.\) Thus
\[
\Delta(x_i+x_j)=\omega_{i,j}\theta(x_i+x_j)=\omega_{i,j}(y_i+y_j).
\]
On the other hand
\[
\Delta(x_i+x_j)=\omega_i x_i+\omega_jy_j.
\]
Comparing the last two equalities we obtain \(\omega_i=\omega_j\) for all \(i, j.\) This means that there
exists a number \(\omega \in \mathbb{C}\) such that
\begin{eqnarray}\label{xxx}
\Delta(x_i)=\omega y_i
\end{eqnarray}
for all \(i=1, \ldots, m.\)

Now for   \(x=x_i+y_i\in \mathcal{G}+\mathcal{I}\) take an element
\(a_x\in \mathcal{G}\) and numbers \(\omega_x, \lambda_x\in
\mathbb{C}\) such that

\[
\Delta(x_i+y_i)=[x_i+y_i, a_x]+\omega_x\theta(x_i)+\lambda_x
y_i\in \mathcal{I}.
\]
Then  \([x_i, a_x]=0,\) and
\[
0=\theta(0)=\theta([x_i, a_x])=[\theta(x_i), a_x]=[y_i, a_x].
\]
Thus
\begin{eqnarray*}
\Delta(x_i+y_i)=\omega_x\theta(x_i)+\lambda_x
y_i=(\omega_x+\lambda_x)y_i.
\end{eqnarray*}
Taking into account  \eqref{xxx} we obtain that
\[
\Delta(y_i)=\Delta(x_i+y_i)-\Delta(x_i)=(\omega_x+\lambda_x)
y_i-\omega y_i=(\omega_x-\omega+\lambda_x)y_i.
\]
This means that for every \(i\in \{1, \ldots, m\}\) there exists a
number \(\lambda_i \in \mathbb{C}\) such that
\begin{eqnarray*}
\Delta(y_i)=\lambda_i y_i
\end{eqnarray*}
for all \(i=1, \ldots, m.\)

Now take an element  \(x=x_i+x_j+y_i+y_j\in
\mathcal{G}+\mathcal{I},\) where \(i\neq j.\) Since
\[
\Delta(x_i+x_j+y_i+y_j)=[x_i+x_j+y_i+y_j,
a_x]+\omega_x\theta(x_i+x_j)+\lambda_x (y_i+y_j)\in \mathcal{I},
\]
we get that  \([x_i+x_j, a_x]=0,\) and therefore \([y_i+y_j,
a_x]=0.\) Thus
\begin{eqnarray*}
\Delta(x_i+x_j+y_i+y_j)=\omega_x\theta(x_i+x_j)+\lambda_x
(y_i+y_j)=(\omega_x+\lambda_x)(y_i+y_j).
\end{eqnarray*}
Taking into account  \eqref{xxx} we obtain that
\begin{eqnarray*}
\Delta(y_i+y_j) & =
&\Delta(x_i+x_j+y_i+y_j)-\Delta(x_i+x_j)=\\
& = & (\omega_x+\lambda_x)(y_i+y_j)-\omega(y_i+y_j)=
(\omega_x-\omega+\lambda_x)(y_i+y_j).
\end{eqnarray*}
On the other hand
\[
\Delta(y_i+y_j)=\Delta(y_i)+\Delta(y_j)=\lambda_i y_i+\lambda_j
y_j.
\]
Comparing the last two equalities we obtain that
\(\lambda_i=\lambda_j\) for all \(i\) and \(j.\) This means that
there exists a number \(\lambda \in \mathbb{C}\) such that
\begin{eqnarray}\label{yyy}
\Delta(y_i)=\lambda y_i
\end{eqnarray}
for all \(i=1, \ldots, m.\) Combining \eqref{xxx} and \eqref{yyy}
we obtain  that \(\Delta=\omega
\theta+\lambda\textrm{pr}_\mathcal{I}.\) This means that
\(\Delta\) is a derivation. The proof is complete.
\end{proof}

Let now \(\Delta\) be an arbitrary local derivation on
\(\mathcal{L}.\) For an  arbitrary element  \(x\in \mathcal{L}\)
take an element \(a_x\in \mathcal{G}\) and a number \(\omega_x\in
\mathbb{C}\) such that
\[
\Delta(x)=[x, a_x]+\omega_x\theta(x).
\]
Then the  mapping
\[
x\in \mathcal{G}\rightarrow [x, a_x]\in \mathcal{G}
\]
is a well-defined local derivation on \(\mathcal{G},\) and
therefore by \cite[Theorem 3.1]{AK2016} it is an inner derivation
generated by an element \(a\in \mathcal{G}.\) Then the local
derivation \(\Delta-R_a\) maps \(\mathcal{L}\) into
\(\mathcal{I}.\) By Lemma~\ref{GeqI} we get that \(\Delta-R_a\) is
a derivation and therefore \(\Delta\) is also a derivation.

In the next Lemma we consider a simple Leibniz algebra
\(\mathcal{L}=\mathcal{G}\dot{+}\mathcal{I}\) such that
\(\mathcal{G}\) and \(\mathcal{I}\) are not isomorphic
\(\mathcal{G}\)-modules.

\begin{lm}\label{GeqII}
Let \(\Delta\) be a local derivation  on \(\mathcal{L}\) such that
\(\Delta\) maps \(\mathcal{L}\) into \(\mathcal{I}.\)  Then
\(\Delta\)  is  a derivation.
\end{lm}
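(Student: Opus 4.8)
The statement to be proved is Lemma~\ref{GeqII}: if $\mathcal{G}$ and $\mathcal{I}$ are non-isomorphic $\mathcal{G}$-modules and $\Delta$ is a local derivation on $\mathcal{L}=\mathcal{G}\dot{+}\mathcal{I}$ with $\Delta(\mathcal{L})\subseteq\mathcal{I}$, then $\Delta$ is a derivation. The strategy mirrors the proof of Lemma~\ref{GeqI}, but simplified by the absence of the $\theta$-term in~\eqref{neq}. The plan is to show that $\Delta$ coincides with $\lambda\,\textrm{pr}_\mathcal{I}$ for a single scalar $\lambda\in\mathbb{C}$; since $\textrm{pr}_\mathcal{I}$ is a derivation of the form~\eqref{proj}, this finishes the proof.

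First I would fix a basis $\{x_1,\dots,x_m\}$ of $\mathcal{G}$ and a basis $\{i_1,\dots,i_p\}$ of $\mathcal{I}$. For each basis vector $x_k$, pick a derivation $D_{x_k}=R_{a_k}+\lambda_k\,\textrm{pr}_\mathcal{I}$ with $\Delta(x_k)=D_{x_k}(x_k)=[x_k,a_k]+\lambda_k\cdot 0=[x_k,a_k]$ (note $\textrm{pr}_\mathcal{I}(x_k)=0$ since $x_k\in\mathcal{G}$). Since $\Delta(x_k)\in\mathcal{I}$ while $[x_k,a_k]\in\mathcal{G}$, this forces $[x_k,a_k]=0$, hence $\Delta(x_k)=0$ for all $k$; by linearity $\Delta|_{\mathcal{G}}\equiv 0$. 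Next, for each $i_k\in\mathcal{I}$ choose $D=R_{a}+\mu\,\textrm{pr}_\mathcal{I}$ with $\Delta(i_k)=[i_k,a]+\mu i_k$; since $[\mathcal{L},\mathcal{I}]=0$ we have $[i_k,a]=0$, so $\Delta(i_k)=\mu_k i_k$ for some scalar $\mu_k$ depending on $k$. It remains to show all $\mu_k$ are equal: apply $\Delta$ to $i_j+i_k$ ($j\neq k$), getting $\Delta(i_j+i_k)=\nu(i_j+i_k)$ for some scalar $\nu$ by the same argument, while also $\Delta(i_j+i_k)=\mu_j i_j+\mu_k i_k$; comparing coordinates in the independent basis vectors gives $\mu_j=\mu_k=\nu$. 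Thus $\Delta|_{\mathcal{I}}=\lambda\,\mathrm{id}_{\mathcal{I}}$ for a single $\lambda$, and combined with $\Delta|_{\mathcal{G}}\equiv 0$ we get $\Delta=\lambda\,\textrm{pr}_\mathcal{I}$, a derivation.

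The main obstacle — such as it is — is a minor one: one must be slightly careful at the step where $\Delta$ is evaluated on mixed elements like $x_j+i_k$ to confirm that no cross-term arises (here $\Delta(x_j+i_k)=[x_j+i_k,a]+\mu i_k$ forces $[x_j,a]=0$, so no contribution to the $\mathcal{G}$-part), and one should make sure the argument does not secretly use that $\mathcal{G}\cong\mathcal{I}$ anywhere — it does not, and in fact the non-isomorphic case is the easier one since~\eqref{neq} has strictly fewer free parameters than~\eqref{equal}. Once Lemma~\ref{GeqII} is in place, the proof of Theorem~\ref{thmlocal} in the non-isomorphic case is completed exactly as after Lemma~\ref{GeqI}: writing $\Delta(x)=[x,a_x]+\omega_x\,\textrm{pr}_\mathcal{I}(x)$ and noting $x\mapsto\mathrm{pr}_{\mathcal{G}}(\Delta(x))=[x,a_x]$ defines a local derivation on the simple Lie algebra $\mathcal{G}$, which is inner by \cite[Theorem~3.1]{AK2016}, say $=R_a$; then $\Delta-R_a$ maps $\mathcal{L}$ into $\mathcal{I}$ and Lemma~\ref{GeqII} applies.
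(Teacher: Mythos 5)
There is a genuine and fatal gap in your argument, located exactly where the real difficulty of the lemma lies. You claim that for $i_k\in\mathcal{I}$ and $a\in\mathcal{G}$ one has $[i_k,a]=0$ ``since $[\mathcal{L},\mathcal{I}]=0$.'' This confuses the two sides of the bracket: $[\mathcal{L},\mathcal{I}]=0$ says that $[x,i]=0$ when the \emph{second} argument lies in $\mathcal{I}$, whereas $[i_k,a]=R_a(i_k)$ is the \emph{right} action of $\mathcal{G}$ on $\mathcal{I}$ --- precisely the module structure making $\mathcal{I}$ a nontrivial irreducible $\mathcal{G}$-module, so it is certainly not identically zero (e.g.\ $[i,h]=\beta(h)i$ for a weight vector $i\in\mathcal{I}_\beta$). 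Consequently $\Delta(i_k)=[i_k,a]+\mu i_k$ is not a scalar multiple of $i_k$ for an arbitrary basis vector $i_k$, and your subsequent step fails as well: even if you choose a weight basis and force $a\in\mathcal{H}$ by testing on $h_0+i_j+i_k$ (a step you also omit), you obtain $\Delta(i_j+i_k)=(\beta_j(a)+\lambda)i_j+(\beta_k(a)+\lambda)i_k$, and when $i_j,i_k$ lie in \emph{different} weight spaces the two coefficients need not coincide, since $\beta_j(a)\neq\beta_k(a)$ in general. So the ``compare coordinates'' conclusion $\mu_j=\mu_k$ does not follow.

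This is not a minor omission: proving that the eigenvalues agree across distinct weight spaces is the entire content of the paper's proof of this lemma. The paper handles it by a two-case analysis on whether $\beta_1-\beta_2$ is a root, choosing auxiliary test elements such as $x=n_1e_{\alpha_1}+\cdots+n_le_{\alpha_l}+i_{\beta_1}+i_{\beta_2}$ (where $\beta_1-\beta_2=\sum n_s\alpha_s$ in terms of simple roots) so that the requirement $\Delta(x)\in\mathcal{I}$ imposes $n_s\alpha_s(h)=0$ for all $s$, hence $\beta_1(h)=\beta_2(h)$, with a separate computation producing compensating cross terms $t_{\alpha_0}c_{\alpha_0}=t_{-\alpha_0}c_{-\alpha_0}$ when $\beta_1-\beta_2$ is a root. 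Your first step ($\Delta|_{\mathcal{G}}\equiv 0$, because $\Delta(x_k)=[x_k,a_k]\in\mathcal{G}\cap\mathcal{I}=\{0\}$) and your final reduction of Theorem~\ref{thmlocal} to the lemma are both correct, but the core of the proof needs to be replaced by an argument of the above type.
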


\begin{proof}

Fix a basis  \(\{y_1, \ldots, y_k\}\)  in \(\mathcal{I}.\) We can
assume that for any \(y_i\) there exists a weight~\(\beta_i\) such
that \(y_i\in \mathcal{I}_{\beta_i}.\)

Let \(h_0\) be a strongly regular element in \(\mathcal{H}.\) For
\(y=h_0+y_i\in \mathcal{G}+\mathcal{I}\) take an element \(a_y\in
\mathcal{G}\) and number \(\lambda_y\in \mathbb{C}\) such that
\[
\Delta(h_0+y_i)=[h_0, a_y]+[y_i, a_y]+\lambda_y y_i\in
\mathcal{I}.
\]
Then  \([h_0, a_y]=0,\) and therefore \(a_y\in \mathcal{H}.\)
Further
\[
\Delta(y_i)=\Delta(h_0+y_i)=[y_i, a_y]+\lambda_y
y_i=(\beta_i(a_y)+\lambda_y)y_i.
\]
This means that there exist numbers  \(\lambda_1, \ldots,
\lambda_m\) such that
\[
\Delta(y_i)=\lambda_i y_i
\]
for all \(i=1, \ldots, m.\)

Now we will show that \(\lambda_1=\ldots=\lambda_m.\) Take
\(y_{i_1},\)  \(y_{i_2},\) \(i_1\neq i_2.\)  Denote
\(i_{\beta_1}=y_{i_1},\) \(i_{\beta_2}=y_{i_2}.\) We have
\begin{eqnarray}\label{lambda}
\Delta(i_{\beta_1}+i_{\beta_2})=\lambda_{i_1}i_{\beta_1}+\lambda_{i_2}i_{\beta_2}.
\end{eqnarray}

Without lost of generality we can assume that \(\beta_1\) is a
fixed highest weight  of \(\mathcal{I}.\) It is known \cite[Page
108]{Hum} that difference of two weights represented as
\[
\beta_1-\beta_2=n_1\alpha_1+\ldots+n_l\alpha_l,
\]
where \(\alpha_1, \ldots, \alpha_l\) are simple roots of
\(\mathcal{G},\) \(n_1, \ldots, n_l\) are non negative integers.

Case 1. \(\alpha_0=n_1\alpha_1+\ldots+n_l\alpha_l\) is not a root.
Consider an element
\[
x=n_1e_{\alpha_1}+\ldots+n_le_{\alpha_l}+i_{\beta_1}+i_{\beta_2}.
\]
Take an element \(a_x=h+\sum\limits_{\alpha\in \Phi}c_\alpha
e_\alpha\in \mathcal{G}\) and number \(\lambda_x\) such that
\[
\Delta(x)=[x, a_x]+\lambda_x(i_{\beta_1}+i_{\beta_2}).
\]
Since \(\Delta(x)\in \mathcal{I},\) we obtain that
\[
\left[\sum\limits_{s=1}^ln_s e_{\alpha_s},
h+\sum\limits_{\alpha\in \Phi}c_\alpha e_\alpha\right]=0.
\]
Let us rewrite the last equality as
\[
\sum\limits_{s=1}^l n_s
\alpha_s(h)e_{\alpha_s}+\sum\limits_{t=1}^l\sum\limits_{\alpha\in
\Phi} \ast e_{\alpha+\alpha_t}=0,
\]
where the symbols \(\ast\) denote appropriate coefficients. The
second summand does not contain any element of the form
\(e_{\alpha_s}.\) Indeed, if we assume that
\(\alpha_s=\alpha+\alpha_t,\) we have that
\(\alpha=\alpha_s-\alpha_t.\) But \(\alpha_s-\alpha_t\) is not a
root, because \(\alpha_s, \alpha_t\) are simple roots. Hence all
coefficients of the first summand are zero, i.e.,
\[
n_1 \alpha_1(h)=\ldots=n_l \alpha_l(h)=0.
\]
Further
\[
\Delta(i_{\beta_1}+i_{\beta_2})=\Delta(x)=[i_{\beta_1}+i_{\beta_2},
a_x]+\lambda_x(i_{\beta_1}+i_{\beta_2}).
\]
Let us calculate the commutator \([i_{\beta_1}+i_{\beta_2}, a_x].\)
We have
\begin{eqnarray*}
[i_{\beta_1}+i_{\beta_2}, a_x] & = &
\left[i_{\beta_1}+i_{\beta_2}, h+\sum\limits_{\alpha\in
\Phi}c_\alpha e_\alpha\right]=\\
& = &
\beta_1(h)i_{\beta_1}+\beta_2(h)i_{\beta_2}+\sum\limits_{t=1}^2\sum\limits_{\alpha\in
\Phi}\ast i_{\beta_t+\alpha}.
\end{eqnarray*}
The last summand does not contain \(i_{\beta_1}\) and
\(i_{\beta_2},\) because \(\beta_1-\beta_2\) is not a root by the assumption.  This means that
\begin{eqnarray}\label{iii}
\Delta(i_{\beta_1}+i_{\beta_2})=(\beta_1(h)+\lambda_x)i_{\beta_1}+(\beta_2(h)+\lambda_x)i_{\beta_2}.
\end{eqnarray}
The difference of the coefficients of the right side is
\[
\beta_1(h)-\beta_2(h)=\sum\limits_{s=1}^l n_s\alpha_s(h)=0,
\]
because \(n_1\alpha_1(h)=\ldots=n_l\alpha_l(h)=0.\) Finally,
comparing coefficients in \eqref{lambda} and  \eqref{iii} we get
\[
\lambda_{i_1}=\beta_1(h)+\lambda_x=\beta_2(h)+\lambda_x=\lambda_{i_2}.
\]

Case 2. \(\alpha_0=n_1\alpha_1+\ldots+n_l\alpha_l\) is a root.
Since \(\beta_1\) is a highest weight, we get \(\dim
\mathcal{I}_{\beta_1}=1.\)  Further since  \(\beta_1-\beta_2\) is
a root, \cite[Lemma 3.2.9]{Good} implies that \( \dim
\mathcal{I}_{\beta_2}=\dim \mathcal{I}_{\beta_1},\) and therefore
there exist  numbers \(t_{-\alpha_0}\neq0\) and \(t_{\alpha_0}\)
such that \([i_{\beta_1},
e_{-\alpha_0}]=t_{-\alpha_0}i_{\beta_2},\, [i_{\beta_2},
e_{\alpha_0}]=t_{\alpha_0}i_{\beta_1}.\) Consider an element
\[
x=t_{-\alpha_0}e_{\alpha_0}+t_{\alpha_0}e_{-\alpha_0}+i_{\beta_1}+i_{\beta_2}.
\]
Take an element \(a_x=h+\sum\limits_{\alpha\in \Phi}c_\alpha
e_\alpha\in \mathcal{G}\) and number \(\lambda_x\) such that
\[
\Delta(x)=[x, a_x]+\lambda_x(i_{\beta_1}+i_{\beta_2}).
\]
Since \(\Delta(x)\in \mathcal{I},\) we obtain that
\[
\left[t_{-\alpha_0}e_{\alpha_0}+t_{\alpha_0}e_{-\alpha_0},
h+\sum\limits_{\alpha\in \Phi}c_\alpha e_\alpha\right]=0.
\]
Let us rewrite the last equality as
\[
\alpha_0(h)t_{-\alpha_0}e_{\alpha_0}-\alpha_0(h)t_{\alpha_0}e_{-\alpha_0}+(t_{-\alpha_0}c_{-\alpha_0}-
t_{\alpha_0}c_{\alpha_0})h_{\alpha_0}+\sum\limits_{\alpha\neq\pm\alpha_0}
\ast e_{\alpha\pm\alpha_0}=0,
\]
where \(h_{\alpha_0}=[e_{\alpha_0}, e_{-\alpha_0}]\in
\mathcal{H}.\) The last  summand in the sum does not contain
elements \(e_{\alpha_0}\) and \(e_{-\alpha_0}.\) Indeed, if we
assume that \(\alpha_0=\alpha-\alpha_0,\) we have that
\(\alpha=2\alpha_0.\) But \(2\alpha_0\) is not a root. Hence the
first three coefficients of this sum are zero, i.e.,
\begin{eqnarray}\label{zzz}
\alpha_0(h)=0,\,
t_{\alpha_0}c_{\alpha_0}=t_{-\alpha_0}c_{-\alpha_0}.
\end{eqnarray}
Further
\[
\Delta(i_{\beta_1}+i_{\beta_2})=\Delta(x)=[i_{\beta_1}+i_{\beta_2},
a_x]+\lambda_x(i_{\beta_1}+i_{\beta_2}).
\]
Let us consider the product  \([i_{\beta_1}+i_{\beta_2}, a_x].\)
We have
\begin{eqnarray*}
[i_{\beta_1}+i_{\beta_2}, a_x] & = &
\left[i_{\beta_1}+i_{\beta_2}, h+\sum\limits_{\alpha\in
\Phi}c_\alpha e_\alpha\right]=\\
& = & [i_{\beta_1}, h]+c_{\alpha_0}[i_{\beta_2},
e_{\alpha_0}]+[i_{\beta_2}, h]+c_{-\alpha_0}[i_{\beta_1},
e_{-\alpha_0}]+\\
& + & c_{\alpha_0}[i_{\beta_1},
e_{\alpha_0}]+c_{-\alpha_0}[i_{\beta_2},
e_{-\alpha_0}]+\sum\limits_{t=1}^2\sum\limits_{\alpha\neq\pm\alpha_0}c_\alpha
[i_{\beta_t}, e_\alpha]=\\
& = &
(\beta_1(h)+t_{\alpha_0}c_{\alpha_0})i_{\beta_1}+(\beta_2(h)+t_{-\alpha_0}c_{-\alpha_0})i_{\beta_2}+\\
& + & \ast i_{2\beta_1-\beta_2}+\ast
i_{2\beta_2-\beta_1}+\sum\limits_{t=1}^2
\sum\limits_{\alpha\neq\pm\alpha_0}\ast i_{\beta_t+\alpha}.
\end{eqnarray*}
The last three summands do not contain \(i_{\beta_1}\) and
\(i_{\beta_2},\) because \(\beta_1-\beta_2=\alpha_0\) and
\(\alpha\neq\pm\alpha_0.\) This means that
\begin{eqnarray}\label{ttt}
\Delta(i_{\beta_1}+i_{\beta_2})=(\beta_1(h)+t_{\alpha_0}c_{\alpha_0}+
\lambda_x)i_{\beta_1}+(\beta_2(h)+t_{-\alpha_0}c_{-\alpha_0}+\lambda_x)i_{\beta_2}.
\end{eqnarray}
Taking into account \eqref{zzz} we find the difference of
coefficients in the right side:
\[
(\beta_1(h)+t_{\alpha_0}c_{\alpha_0})-(\beta_2(h)+t_{-\alpha_0}c_{-\alpha_0})=
\alpha_0(h)+t_{\alpha_0}c_{\alpha_0}-t_{-\alpha_0}c_{-\alpha_0}=0.
\]
Combining \eqref{lambda} and \eqref{ttt} we obtain that
\[
\lambda_{i_1}=\beta_1(h)+t_{\alpha_0}c_{\alpha_0}+\lambda_x=\beta_2(h)+t_{-\alpha_0}c_{-\alpha_0}+\lambda_x=\lambda_{i_2}.
\]
So,  we have proved that
\[
\Delta(x_i+y_i)=\lambda y_i,
\]
where \(\lambda\in \mathbb{C}.\) This means that
\(\Delta=\lambda\textrm{pr}_\mathcal{I}.\) The proof is complete.
\end{proof}

Let \(\Delta\) be an arbitrary local derivation and let  \(x\in
\mathcal{G}\) be an arbitrary element. Take an element \(a_x\in
\mathcal{G}\)  such that
\[
\Delta(x)=[x, a_x].
\]
As in the case \(\dim \mathcal{G}=\dim \mathcal{I},\) the mapping
\[
x\in \mathcal{G}\rightarrow [x, a_x]\in \mathcal{G}
\]
is an  inner derivation generated by an element \(a\in
\mathcal{G},\) and  \(\Delta-R_a\) maps \(\mathcal{L}\) into
\(\mathcal{I}.\) This means  that \(\Delta\) is a derivation, that
completes the proof of Theorem~~\ref{thmlocal}.

\subsection{Local derivations on filiform   Leibniz
algebras}\label{filiform}

\

In this subsection we consider a special class of nilpotent
Leibniz algebras, so-called filiform Leibniz algebras, and show
that they admit local derivations which are not derivations.

A nilpotent  Leibniz algebra $\mathcal{L}$ is called \textit{filiform} if
$\dim \mathcal{L}^k=n-k$ for $2\leq k \leq n,$  where
$\mathcal{L}^1=\mathcal{L},$ $\mathcal{L}^{k+1}=[\mathcal{L}^{k},
\mathcal{L}],\, k\geq1.$

\begin{thm}\label{filiform}
Let $\mathcal{L}$ be a finite-dimensional filiform   Leibniz
algebra with $\dim \mathcal{L}\geq 3.$ Then $\mathcal{L}$ admits a
local derivation which is not a derivation.
\end{thm}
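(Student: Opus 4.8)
The plan is to exhibit a rank-one linear operator supported on the top term of the lower central series.

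First I would fix a basis $\{e_1,\dots,e_n\}$ of $\mathcal{L}$ adapted to the lower central series, so that $\mathcal{L}^k=\langle e_{k+1},\dots,e_n\rangle$ for $k\ge 2$ (in particular $\mathcal{L}^2=\langle e_3,\dots,e_n\rangle$ and $\mathcal{L}^{n-1}=\langle e_n\rangle$ is one dimensional) and, after the usual normalization available for filiform Leibniz algebras, $[e_i,e_1]=e_{i+1}$ for $2\le i\le n-1$, $[e_n,e_1]=0$, with $e_n$ central, i.e. $e_n\in\mathrm{Ann}(\mathcal{L})$. Then I would take the linear operator $\Delta=\mathrm{pr}_{\langle e_n\rangle}$, that is, $\Delta(x)=\gamma_n e_n$ for $x=\sum_i\gamma_i e_i$; equivalently $\Delta(x)=\phi(x)e_n$, where $\phi$ is the $e_n$-coordinate functional, and $\phi(\mathcal{L}^2)\ne\{0\}$. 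Since $\Delta(\mathcal{L})\subseteq\langle e_n\rangle\subseteq\mathrm{Ann}(\mathcal{L})$ we have $[\Delta(x),y]+[x,\Delta(y)]=0$ for all $x,y$, whereas $\Delta([e_{n-1},e_1])=\Delta(e_n)=e_n\ne 0$; hence the derivation identity fails for $x=e_{n-1}$, $y=e_1$, so $\Delta$ is not a derivation.

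It remains to check that $\Delta$ is a local derivation, i.e. $\gamma_n e_n\in\mathrm{Der}(\mathcal{L})\cdot x$ for every $x=\sum_i\gamma_i e_i$. If $\gamma_n=0$ this is clear. If $x\notin\mathcal{L}^2$, i.e. $(\gamma_1,\gamma_2)\ne(0,0)$, I would use the maps $D_{p,q}$ defined by $D_{p,q}(e_1)=p\,e_n$, $D_{p,q}(e_2)=q\,e_n$, $D_{p,q}(e_j)=0$ for $j\ge 3$: each vanishes on $\mathcal{L}^2$ and takes values in $\mathrm{Ann}(\mathcal{L})$, hence is a derivation by the observation preceding Theorem~\ref{pure}, and choosing $(p,q)$ with $\gamma_1 p+\gamma_2 q=\gamma_n$ gives $D_{p,q}(x)=\gamma_n e_n=\Delta(x)$. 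The remaining case $x\in\mathcal{L}^2$ with $\gamma_n\ne 0$ is the core of the argument: writing $x$ with leading term $\gamma_j e_j$ ($\gamma_j\ne0$), one must produce a single derivation $D$ with $D(x)\in\langle e_n\rangle\setminus\{0\}$. For $3\le j\le n-1$ I would show that $\mathrm{Der}(\mathcal{L})\cdot x$ already contains $\mathcal{L}^{j}\ni e_n$, and for $j=n$, i.e. $x\in\langle e_n\rangle$, that $\mathcal{L}$ carries a non-nilpotent (``grading-type'') derivation $D_0$ with $D_0(e_n)\ne 0$. Both facts I would extract from the structure of non-Lie filiform Leibniz algebras -- via their classification into the standard families and the known form of $\mathrm{Der}(\mathcal{L})$ on each -- which exhibits a shift-type outer derivation acting non-trivially on every $e_k$ up to $e_n$.

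The main obstacle is precisely this last step, and it is exactly where the standing hypothesis that $\mathcal{L}$ be non-Lie is used: non-Lie filiform Leibniz algebras are never characteristically nilpotent, so $\mathrm{Der}(\mathcal{L})\cdot e_n=\langle e_n\rangle$, and more generally $\mathrm{Der}(\mathcal{L})\cdot x$ is large enough for $x\in\mathcal{L}^2$. For small dimension (for instance $\dim\mathcal{L}=3$, where $\mathcal{L}^2=\mathcal{L}^{n-1}=\langle e_3\rangle$) this is a direct computation of $\mathrm{Der}(\mathcal{L})$; in general it is handled family by family. Granting it, the three cases above show that $\Delta=\mathrm{pr}_{\langle e_n\rangle}$ is a local derivation of $\mathcal{L}$ that is not a derivation, which proves the theorem.
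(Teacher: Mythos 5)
Your reduction to the non-Lie case via \cite{AK2016}, your verification that $\Delta=\mathrm{pr}_{\langle e_n\rangle}$ is not a derivation, and your treatment of elements $x\notin\mathcal{L}^2$ by annihilator-type derivations $D_{p,q}$ are all fine. But the step you yourself flag as ``the core of the argument'' --- producing, for every $x\in\mathcal{L}^2$ with nonzero $e_n$-coordinate, a derivation $D$ with $D(x)=\gamma_n e_n$ --- is not proved, and the claims you lean on there are not established and are doubtful as stated. For $x=e_n$ you need a derivation with $D(e_n)\neq 0$; in an adapted basis every derivation of a filiform algebra satisfies $D(e_n)=c_n e_n$ with $c_n$ an eigenvalue of $D$, so this is exactly the assertion that $\mathcal{L}$ is not characteristically nilpotent. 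You assert ``non-Lie filiform Leibniz algebras are never characteristically nilpotent'' without proof; this is a substantive statement about the derivation algebras of all members of the families $F_1,F_2,F_3$ for all parameter values, it is not a consequence of anything in the paper, and characteristically nilpotent filiform (Leibniz) algebras are a known phenomenon, so the claim cannot simply be granted. Likewise, for $x$ with leading term $\gamma_j e_j$, $3\le j\le n-1$, the claim $\mathrm{Der}(\mathcal{L})\cdot x\supseteq\mathcal{L}^{j}$ is unjustified: inner derivations only give $R_a(x)=\gamma_j e_{j+1}+(\text{tail})$, which pins down $e_n$ only when $j=n-1$, and the outer derivations available depend delicately on the structure constants $\alpha_k,\beta_k,\theta_i$. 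So your candidate $\Delta$ may simply fail to be a local derivation on some filiform Leibniz algebras, and in any case the proposal does not prove that it is one.

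The paper avoids this difficulty entirely by choosing a local derivation supported on coordinates \emph{outside} $\mathcal{L}^2$: for the family $F_1$ it takes $\Delta\bigl(\sum_k x_k e_k\bigr)=(x_1+x_2)e_{n-1}$, checks directly that $D_1\bigl(\sum_k x_k e_k\bigr)=(x_1+x_2)e_{n-1}+x_3 e_n$ is a derivation while $\Delta$ is not (Lemma~\ref{filider}), and then writes $\Delta(x)=(D_1+tD_2)(x)$ with $t=-x_3/(x_1+x_2)$, where $D_2\bigl(\sum_k x_k e_k\bigr)=(x_1+x_2)e_n$ is an annihilator-type derivation; when $x_1+x_2=0$ one has $\Delta(x)=0=D_2(x)$. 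Because this $\Delta$ vanishes identically on $\mathcal{L}^2$, the case that sinks your argument never occurs, and only two explicitly exhibited derivations are ever needed. If you want to rescue your approach, you would have to either restrict to algebras known to carry a non-nilpotent (diagonalizable) derivation acting non-trivially on $e_n$, or switch to a functional $\phi$ supported on the complement of $\mathcal{L}^2$ as the paper does.
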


For  filiform Lie algebras this result was proved in \cite[Theorem
4.1]{AK2016}. Hence  it is suffices to consider filiform non-Lie
Leibniz algebras.

Recall \cite{OmRa} that each complex $n$-dimensional filiform Leibniz algebra admits a basis $\{e_1,e_2,\dots,e_n\}$
such that the table of multiplication of the algebra has one of the following forms:
$$\begin{array}{rl}
F_1(\alpha_4,\alpha_5,\dots,\alpha_n,\theta):&\left\{\begin{array}{ll}
[e_1,e_1]=e_3,&\\{} [e_i,e_1]=e_{i+1},&2\leq i\leq n-1,\\{}
[e_1,e_2]=\displaystyle\sum_{k=4}^{n-1} \alpha_k e_k+\theta
e_n,&\\{} [e_i,e_2]=\displaystyle\sum_{k=i+2}^n \alpha_{k+2-i}
e_k,&2\leq i\leq n-2;
\end{array}\right.
\end{array}
$$

$$
\begin{array}{rl}
F_2(\beta_3,\beta_4,\dots,\beta_n,\gamma):&\left\{\begin{array}{ll}
[e_1,e_1]=e_3,&\\{} [e_i,e_1]=e_{i+1},& 3\leq i\leq n-1,\\{}
[e_1,e_2]=\displaystyle\sum_{k=4}^{n} \beta_k e_k,&\\{}
[e_2,e_2]=\gamma e_n,&\\{} [e_i,e_2]=\displaystyle\sum_{k=i+2}^n
\beta_{k+2-i} e_k,& 3\leq i\leq n-2;
\end{array}\right.
\end{array}
$$
$$
\begin{array}{rl}
F_3(\theta_1,\theta_2,\theta_3):&\left\{\begin{array}{ll}
[e_i,e_1]=e_{i+1},& 2\leq i\leq n-1,\\{} [e_1,e_i]=-e_{i+1},&
3\leq i\leq n-1,\\{} [e_1,e_1]=\theta_1 e_n,&\\{}
[e_1,e_2]=-e_3+\theta_2 e_n,&\\{} [e_2,e_2]=\theta_3 e_n,&\\{}
[e_i,e_j]=-[e_j,e_i]\in \textrm{span} \{e_{i+j+1},\ldots,
e_n\},&\left\{\begin{array}{l} 1\leq i\leq n-2,\\{}
 2\leq j\leq n-i,\ i<j
 \end{array}\right.\\[2mm]
[e_i,e_{n-i}]=-[e_{n-i},e_i]=\alpha (-1)^{i} e_n ,&2\leq i\leq
n-1,
\end{array}\right.%\\[10mm]
\end{array}$$
where $\alpha\in \{0,1\}$ for odd $n$ and $\alpha=0$ for even $n$.
Moreover, the structure constants of an algebra from
$F_3(\theta_1,\theta_2,\theta_3)$ should satisfy the Leibniz
identity.

It is easy to see that algebras of the first and the second
families are non-Lie algebras. Moreover, an algebra of the third
family is a Lie algebra if and only if
$(\theta_1,\theta_2,\theta_3)=(0,0,0).$

Firstly we consider the  family of algebras
$\mathcal{L}=F_1(\alpha_4,\alpha_5,\dots,\alpha_n,\theta).$ Let us
define a linear operator $D$ on $\mathcal{L}$ by
\begin{equation}\label{filiformder}
D\left(\sum\limits_{k=1}^n x_ke_k\right)=\alpha (x_1 + x_2)e_{n-1}
+ \beta x_3 e_n,
\end{equation}
where $\alpha, \beta\in \mathbb{C}.$

\begin{lm}\label{filider}
The linear operator $D$ on $\mathcal{L}$ defined
by~\eqref{filiformder} is a derivation if and only
if~$\alpha=\beta.$
\end{lm}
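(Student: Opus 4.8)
The plan is to verify the derivation identity $D([e_i,e_j])=[D(e_i),e_j]+[e_i,D(e_j)]$ for every pair of basis vectors $e_i,e_j$; since both sides are bilinear in their arguments, this suffices. From \eqref{filiformder} one reads off $D(e_1)=D(e_2)=\alpha e_{n-1}$, $D(e_3)=\beta e_n$, and $D(e_k)=0$ for all $k\geq 4$; in particular $D(\mathcal{L})\subseteq\mathrm{span}\{e_{n-1},e_n\}$.

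The first step is a structural remark about $\mathcal{L}=F_1(\alpha_4,\dots,\alpha_n,\theta)$: going through its multiplication table one sees that $e_{n-1}$ and $e_n$ are ``almost central'', namely the only nonzero product in which either of them occurs is $[e_{n-1},e_1]=e_n$, while $[e_n,x]=[x,e_{n-1}]=[x,e_n]=0$ for every $x\in\mathcal{L}$ (here one uses that $n\geq 4$, so $e_{n-1},e_n\notin\{e_1,e_2\}$; the case $n=3$ is checked directly). Consequently the right-hand side of the derivation identity collapses: the term $[e_i,D(e_j)]$ is always zero, because $D(e_j)\in\mathrm{span}\{e_{n-1},e_n\}$ is right-annihilated by all of $\mathcal{L}$, and $[D(e_i),e_j]$ is nonzero only when $j=1$ and $D(e_i)$ has a nonzero $e_{n-1}$-component, i.e. only for $(i,j)\in\{(1,1),(2,1)\}$, where it equals $[\alpha e_{n-1},e_1]=\alpha e_n$.

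On the left-hand side, $D([e_i,e_j])$ can be nonzero only if the product $[e_i,e_j]$ has a nonzero $e_3$-component, since $D$ kills $e_4,\dots,e_n$. Inspecting the table of $F_1$ --- the products $[e_i,e_1]=e_{i+1}$ for $i\geq 3$, together with $[e_1,e_2]$ and all $[e_i,e_2]$, lie in $\mathrm{span}\{e_4,\dots,e_n\}$ --- an $e_3$-component occurs precisely for $(i,j)\in\{(1,1),(2,1)\}$, where $[e_i,e_1]=e_3$ and hence $D([e_i,e_1])=\beta e_n$. Therefore both sides of the derivation identity vanish for every pair $(i,j)$ except $(1,1)$ and $(2,1)$, where the identity becomes $\beta e_n=\alpha e_n$. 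Hence $D$ is a derivation if and only if $\alpha=\beta$. There is no genuine obstacle here; the only step requiring care is the bookkeeping that $e_{n-1}$ and $e_n$ are annihilated by all multiplications except $[e_{n-1},e_1]$, which is exactly what collapses the whole verification to the single relation $[e_1,e_1]=[e_2,e_1]=e_3$.
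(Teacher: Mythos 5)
Your proposal is correct and follows essentially the same route as the paper: a direct verification of the derivation identity on all basis pairs of $F_1$, which (as in the paper's Cases 1--3 after the necessity computation) collapses to the single relation $\alpha e_n=\beta e_n$ coming from $[e_1,e_1]=[e_2,e_1]=e_3$, all other pairs giving $0=0$ because $D(\mathcal{L})\subseteq \mathrm{span}\{e_{n-1},e_n\}$ and $D$ annihilates $e_4,\dots,e_n$. The one caveat is the case $n=3$, which you defer to a ``direct check'' without performing it: there $e_{n-1}=e_2$, the term $[e_1,\alpha e_{n-1}]=\alpha\theta e_3$ no longer vanishes, and the equivalence as stated needs $\theta=0$ --- but the paper's own proof silently makes the same computation $[e_1,\alpha e_{n-1}]=0$, so this is a shared issue rather than a defect of your argument relative to the paper's.
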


\begin{proof}   Suppose that  the linear operator  $D$ defined by~\eqref{filiformder} is a  derivation.
Since $[e_1, e_1]=e_3,$ we have that
\begin{eqnarray*}
D([e_1, e_1]) & = & D(e_3)=\beta e_n
\end{eqnarray*}
and
\begin{eqnarray*}
[D(e_1), e_1]+ [e_1, D(e_1)] & = & [\alpha e_{n-1}, e_1]+[e_1,
\alpha e_{n-1}]=\alpha e_n.
\end{eqnarray*}
Thus $\alpha=\beta.$

Conversely, let $D$ be a linear operator defined by
\eqref{filiformder} with $\alpha=\beta.$ We may assume that
$\alpha=\beta=1.$

In order to prove that $D$ is a derivation it is sufficient to
show that
$$
D([e_i, e_j])=[D(e_i),e_j]+[e_i,D(e_j)]
$$
for all $1\leq i, j \leq n.$

Case 1. $i+j=2.$ Then $i=j=1$ and in this case we can check as
above.

Case 2. $i+j=3.$ Then
\[
[D(e_2),e_1]+[e_2,D(e_1)]=[e_{n-1}, e_1]+[e_2, e_{n-1}]=[e_{n-1},
e_1]=e_n=D(e_3)=D([e_2, e_1])
\]
and
\[
[D(e_1),e_2]+[e_1,D(e_2)]=[e_{n-1}, e_2]+[e_1,
e_{n-1}]=0=D\left(\displaystyle\sum_{k=4}^{n-1} \alpha_k
e_k+\theta e_n\right)=D([e_1, e_2]).
\]

Case 3. $i+j\geq 4.$ Then
\begin{eqnarray*}
D([e_i,  e_j])=0=[D(e_i), e_j]+[e_i, D(e_j)].
\end{eqnarray*}
The proof is complete.
\end{proof}

Now we consider  the linear operator $\Delta$ defined by
\eqref{filiformder} with $\alpha=1, \beta=0.$

\begin{lm}\label{locfili}
The  linear operator $\Delta$ is a local derivation which is not a
derivation.
\end{lm}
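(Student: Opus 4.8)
The plan is to show two things: (a) the operator $\Delta$ defined by \eqref{filiformder} with $\alpha=1,\beta=0$ is \emph{not} a derivation, and (b) for every $x\in\mathcal{L}$ there is an honest derivation $D_x$ with $\Delta(x)=D_x(x)$. Claim (a) is immediate from Lemma~\ref{filider}: that lemma says the operator \eqref{filiformder} is a derivation precisely when $\alpha=\beta$, and here $\alpha=1\neq 0=\beta$, so $\Delta\notin\mathrm{Der}(\mathcal{L})$. The substance of the proof is therefore (b).

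For (b) I would use the two-parameter family of genuine derivations supplied by Lemma~\ref{filider}, namely $D_t$ given by \eqref{filiformder} with $\alpha=\beta=t$, together with the inner derivations $R_a$. Write an arbitrary element as $x=\sum_{k=1}^n x_k e_k$. If $x_3\neq 0$, I would first pick $t$ so that the $D_t$-image already has the right $e_n$-coefficient: since $D_t(x)=t(x_1+x_2)e_{n-1}+t x_3 e_n$ while $\Delta(x)=(x_1+x_2)e_{n-1}$, the discrepancy between $\Delta(x)$ and $D_t(x)$ lies in $\mathrm{span}\{e_{n-1},e_n\}$, and I would absorb it using an inner derivation. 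Concretely, $R_{e_1}$ and $R_{e_2}$ (and more generally $R_a$ for suitable $a$ in the span of $e_1,e_2$) move lower-weight basis vectors up the chain into $e_{n-1},e_n$, and the subspace $\mathrm{span}\{e_{n-1},e_n\}$ is annihilated by all right multiplications in a filiform algebra of this type, so adding such an $R_a$ does not disturb anything already placed in $e_{n-1},e_n$. The upshot is that for each fixed $x$ one solves a small linear system (two equations, in the scalar $t$ and the coefficients of $a$) to hit the target vector $\Delta(x)\in\mathrm{span}\{e_{n-1},e_n\}$. When $x_1+x_2=0$ and $x_3=0$ we have $\Delta(x)=0$ and may take $D_x=0$; when $x_3=0$ but $x_1+x_2\neq 0$ one still only needs to realize a multiple of $e_{n-1}$, again via an inner derivation on a lower basis vector.

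The key point making everything consistent is that the target of $\Delta$ is contained in the annihilator-like subspace $\mathrm{span}\{e_{n-1},e_n\}$ of $\mathcal{L}$, together with the fact (read off from the multiplication table $F_1$) that each basis vector $e_k$ with $k\le n-2$ can be pushed onto $e_{n-1}$ and $e_n$ by repeated right multiplication by $e_1$ (and by $e_2$), giving enough independent inner derivations whose images land exactly in $\mathrm{span}\{e_{n-1},e_n\}$. The main obstacle, and the only place a real computation is needed, is verifying that the relevant $2\times 2$ (or smaller) linear system is solvable for \emph{every} $x$ — i.e. checking that the particular inner derivations chosen do produce $e_{n-1}$ and $e_n$ with coefficients one can match to arbitrary prescribed values — and then assembling $D_x=D_t+R_a$ and confirming $D_x(x)=\Delta(x)$. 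Once that bookkeeping is done, $\Delta$ is by definition a local derivation, and by (a) it is not a derivation, completing the proof.
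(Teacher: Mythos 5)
Part (a) of your argument coincides with the paper's: Lemma~\ref{filider} immediately gives that $\Delta$ (the case $\alpha=1$, $\beta=0$ of \eqref{filiformder}) is not a derivation. The gap is in part (b). You propose to absorb the discrepancy $\Delta(x)-D_t(x)\in\mathrm{span}\{e_{n-1},e_n\}$ by an inner derivation $R_a$; since in the family $F_1$ all right multiplications $R_{e_j}$ with $j\geq 3$ vanish, this means $R_a=c_1R_{e_1}+c_2R_{e_2}$. This cannot work in general: from the table, $R_{e_1}(x)=(x_1+x_2)e_3+\sum_{i=3}^{n-1}x_ie_{i+1}$ has an $e_3$-component $(x_1+x_2)$, and $R_{e_2}(x)$ has an $e_4$-component $(x_1+x_2)\alpha_4$. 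So for a generic $x$ with $x_1+x_2\neq0$, requiring $R_a(x)\in\mathrm{span}\{e_{n-1},e_n\}$ forces $c_1=0$ and then (unless the structure constants $\alpha_k$ with $k\leq n-2$ all vanish) $c_2=0$, i.e.\ $R_a(x)=0$, and nothing is absorbed. The exact case that breaks is $x_3\neq0$ and $x_1+x_2\neq0$, where one must cancel the unwanted term $x_3e_n$ in $D_1(x)$. Two of your supporting claims are also off: ``pushing $e_k$ onto $e_{n-1},e_n$ by repeated right multiplication'' is irrelevant because an iterate $R_{e_1}^k$ is not a derivation, and $\mathrm{span}\{e_{n-1},e_n\}$ is not annihilated by right multiplications, since $[e_{n-1},e_1]=e_n$.

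The missing idea is an \emph{outer} derivation whose image lies in the annihilator and which vanishes on $[\mathcal{L},\mathcal{L}]$. The paper takes $D_2\bigl(\sum_{k=1}^nx_ke_k\bigr)=(x_1+x_2)e_n$; since $D_2|_{[\mathcal{L},\mathcal{L}]}\equiv0$ and $D_2(\mathcal{L})\subseteq\mathbb{C}e_n\subseteq\mathrm{Ann}(\mathcal{L})$, it is a derivation, and for $x_1+x_2\neq0$ the combination $D=D_1+tD_2$ with $t=-x_3/(x_1+x_2)$ gives $D(x)=(x_1+x_2)e_{n-1}+x_3e_n-x_3e_n=\Delta(x)$, while for $x_1+x_2=0$ one takes $D=D_2$, both sides being zero. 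Replacing your inner derivations by this single $D_2$ repairs the argument and is essentially the whole content of the paper's proof.
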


\begin{proof} By Lemma~\ref{filider}, $\Delta$ is not a
derivation.

Let us show that $\Delta$ is a local derivation. Denote by $D_1$
the derivation defined by~\eqref{filiformder} with
$\alpha=\beta=1.$ Let $D_2$ be a linear operator on $\mathcal{L}$
defined by
\begin{equation*}
D_2\left(\sum\limits_{k=1}^n x_ke_k\right)=(x_1+x_2) e_n.
\end{equation*}
Since $D_2|_{\mathcal{[\mathcal{L},\mathcal{L}]}}\equiv 0$ and
$D_2(\mathcal{L})\subseteq Z(\mathcal{L}),$ it follows that
$$
D_2([x,y])=0=[D_2(x),y]+[x,D_2(y)]
$$
for all $x, y\in \mathcal{L}.$ So, $D_2$ is a derivation.

Finally, for any $x=\sum\limits_{k=1}^n x_ke_k$ we find a
derivation $D$ such that $\Delta(x)=D(x).$

Case 1. $x_1+x_2=0.$ Then
\begin{eqnarray*}
\Delta(x) =  0 =  D_2(x).
\end{eqnarray*}

Case 2. $x_1+x_2\neq 0.$ Set
$$
D=D_1+tD_2,
$$
where $t=-\frac{\textstyle x_3}{\textstyle x_1+x_2}.$ Then
\begin{eqnarray*}
D(x)  & = &  D_1(x)+t D_2(x) =
(x_1+x_2)e_{n-1}+x_3e_n+t(x_1+x_2)e_n =
\\
& = & (x_1+x_2)e_{n-1}+x_3e_n-x_3e_n =\Delta(x).
\end{eqnarray*}
The proof is complete.
\end{proof}

Now let us consider the second and third classes.

For an algebra
$\mathcal{L}=F_2(\beta_3,\beta_4,\dots,\beta_n,\gamma)$ from the
second class define a linear operator $D$ on $\mathcal{L}$ by
\begin{equation}\label{ftwo}
D\left(\sum\limits_{k=1}^n x_ke_k\right)=\alpha x_1 e_{n-1} +
\beta x_3 e_n,
\end{equation}
where $\alpha, \beta\in \mathbb{C}.$

For an algebra $\mathcal{L}=F_3(\theta_1,\theta_2,\theta_3)$
define a linear operator $D$ on $\mathcal{L}$ by
\begin{equation}\label{fth}
D\left(\sum\limits_{k=1}^n x_ke_k\right)=\alpha x_2 e_{n-1}+\beta
x_3 e_{n},
\end{equation}
where $\alpha, \beta\in \mathbb{C}.$

As in the proof of Lemma~\ref{filider},  we can check that the
operator \(D\) defined by \eqref{ftwo} or \eqref{fth} is a
derivation if and only if \(\alpha=\beta.\) Therefore the operator \(D\)
defined by \eqref{ftwo} or  \eqref{fth} gives is the example of a
local derivations which is  not a derivation.

\section{Automorphisms of simple Leibniz algebras}
\label{fifth}

Let $\mathcal{L}=\mathcal{G}\dot{+} \mathcal{I}$ be a simple Leibniz algebra, where
$\mathcal{G}$ is a simple Lie algebra and $\mathcal{I}$ is its right irreducible module.

We consider an  automorphism $\varphi$ of $\mathcal{L}$. The ideal
generated by squares of elements of $\mathcal{L}$ coincides with
the \(\textrm{span}\{ [x, x]: x\in \mathcal{L}\}.\) Then for
$x=\sum\limits_{k=1}^n\lambda_i[x_i, x_i]\in \mathcal{I}$ we have
\begin{eqnarray*}
\varphi(x) & = & \varphi\left(\sum\limits_{k=1}^n\lambda_i[x_i,x_i]\right)=\sum\limits_{k=1}^n\lambda_i\varphi([x_i,x_i])=\\
&=& \sum\limits_{k=1}^n\lambda_i[\varphi(x_i),\varphi(x_i)]=
\sum\limits_{k=1}^n\lambda_i[y_i,y_i],
\end{eqnarray*}
i.e., \(\varphi(\mathcal{I})\subseteq \mathcal{I}.\)

Now let $y=\sum\limits_{k=1}^n\lambda_i[y_i,y_i]$  be  an
arbitrary element of the ideal $\mathcal{I}.$ Since $\varphi$ is an
automorphism,  for every $y_i\in \mathcal{L}$ there exists  $x_i\in
\mathcal{L}$ such that $\varphi(x_i)=y_i$. Then we have
\begin{eqnarray*}
y&=&\sum\limits_{k=1}^n\lambda_i[y_i,y_i]=\sum\limits_{k=1}^n\lambda_i[\varphi(x_i),\varphi(x_i)]=\\
&=&\sum\limits_{k=1}^n \lambda_i\varphi([x_i,x_i])=
\varphi\left(\sum\limits_{k=1}^n\lambda_i[x_i,x_i]\right).
\end{eqnarray*}
This implies that for the element
$z=\sum\limits_{k=1}^n\lambda_i[x_i,x_i]\in \mathcal{I}$ we have
$\varphi(z)=y$. So we have proved that $\mathcal{I} \subseteq
\varphi(\mathcal{I}),$ and therefore
$\varphi(\mathcal{I})=\mathcal{I}$.

Now we shall show  that  any $\varphi\in
\textrm{Aut}(\mathcal{L})$ can be represented as the sum
$\varphi=\varphi_{\mathcal{G},\mathcal{G}}+\varphi_{\mathcal{G},\mathcal{I}}+\varphi_{\mathcal{I},\mathcal{I}}$,
where \(\varphi_{\mathcal{G},\mathcal{G}} : \mathcal{G}
\rightarrow \mathcal{G}\) is an automorphism on \(\mathcal{G},\)
\(\varphi_{\mathcal{G},\mathcal{I}} : \mathcal{G} \rightarrow
\mathcal{I}\) is a \(\mathcal{G}\)-module homomorphism from
\(\mathcal{G}\) into \(\mathcal{I},\) and
\(\varphi_{\mathcal{I},\mathcal{I}} :\mathcal{I} \rightarrow
\mathcal{I}\) is a \(\mathcal{G}\)-module isomorphism  of
\(\mathcal{I}.\) In particular,
\[
\varphi(x+i)=\varphi_{\mathcal{G},\mathcal{G}}(x)+\varphi_{\mathcal{G},\mathcal{I}}(x)+\varphi_{\mathcal{I},\mathcal{I}}(i),\,
x+i\in \mathcal{G}+\mathcal{I}.
\]

\begin{lm}\label{decoat}
Let $\mathcal{L}=\mathcal{G}\dot{+} \mathcal{I}$ be a simple
complex Leibniz algebra and let \(\varphi\in
\textrm{Aut}(\mathcal{L})\) be an automorphism. Then
\[
\varphi=\varphi_{\mathcal{G},\mathcal{G}}+\varphi_{\mathcal{I},\mathcal{I}},
\]
if \(\dim \mathcal{G}\neq\dim \mathcal{I},\) and
\[
\varphi=\varphi_{\mathcal{G},\mathcal{G}}+\omega \theta
\circ\varphi_{\mathcal{G},\mathcal{G}}+\varphi_{\mathcal{I},\mathcal{I}},
\]
if \(\dim \mathcal{G}=\dim \mathcal{I}.\)
\end{lm}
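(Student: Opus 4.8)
The starting point is the decomposition $\mathcal{L}=\mathcal{G}\dot{+}\mathcal{I}$ together with the fact, already established above, that $\varphi(\mathcal{I})=\mathcal{I}$. Writing an arbitrary element as $x+i$ with $x\in\mathcal{G}$, $i\in\mathcal{I}$, I would first define $\varphi_{\mathcal{I},\mathcal{I}}:=\varphi|_{\mathcal{I}}$; since $[\mathcal{L},\mathcal{I}]=0$ the action of $\mathcal{G}$ on $\mathcal{I}$ is by right multiplication, and from $\varphi([i,x])=[\varphi(i),\varphi(x)]$ together with $\varphi(\mathcal{I})=\mathcal{I}$ one checks that $\varphi|_{\mathcal{I}}$ is a bijective $\mathcal{G}'$-intertwiner (where $\mathcal{G}'=\varphi(\mathcal{G})$, but composing with an inner adjustment lets one take $\mathcal{G}'=\mathcal{G}$); hence it is a $\mathcal{G}$-module isomorphism of $\mathcal{I}$. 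Next, for $x\in\mathcal{G}$ write $\varphi(x)=\varphi_{\mathcal{G},\mathcal{G}}(x)+\varphi_{\mathcal{G},\mathcal{I}}(x)$ according to the direct sum $\mathcal{L}=\mathcal{G}\dot{+}\mathcal{I}$, i.e. $\varphi_{\mathcal{G},\mathcal{G}}=\textrm{pr}_{\mathcal{G}}\circ\varphi|_{\mathcal{G}}$ and $\varphi_{\mathcal{G},\mathcal{I}}=\textrm{pr}_{\mathcal{I}}\circ\varphi|_{\mathcal{G}}$. The linear span of squares shows $\varphi$ preserves $\mathcal{I}$, and a short computation with the Leibniz identity and the identity $[\mathcal{I},\mathcal{L}]=0$, applied to $\varphi([x,y])=[\varphi(x),\varphi(y)]$ for $x,y\in\mathcal{G}$, forces $\varphi_{\mathcal{G},\mathcal{G}}$ to be an algebra homomorphism on $\mathcal{G}$ and $\varphi_{\mathcal{G},\mathcal{I}}$ to be a $\mathcal{G}$-module homomorphism from $\mathcal{G}$ into $\mathcal{I}$ (the cross terms that would obstruct this lie in $[\mathcal{I},\mathcal{G}]$, which vanishes). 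Surjectivity of $\varphi$ together with $\varphi(\mathcal{I})=\mathcal{I}$ then gives that $\varphi_{\mathcal{G},\mathcal{G}}$ is onto, hence an automorphism of the simple Lie algebra $\mathcal{G}$. This establishes the general decomposition $\varphi=\varphi_{\mathcal{G},\mathcal{G}}+\varphi_{\mathcal{G},\mathcal{I}}+\varphi_{\mathcal{I},\mathcal{I}}$ asserted in the paragraph preceding the lemma.

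The content of the lemma itself is to identify $\varphi_{\mathcal{G},\mathcal{I}}$ using Schur's Lemma (Theorem~\ref{SL}). If $\dim\mathcal{G}\neq\dim\mathcal{I}$, then $\mathcal{G}$ and $\mathcal{I}$ are non-isomorphic irreducible $\mathcal{G}$-modules, so by part~(i) of Schur's Lemma any $\mathcal{G}$-module homomorphism $\mathcal{G}\to\mathcal{I}$ is trivial; thus $\varphi_{\mathcal{G},\mathcal{I}}=0$ and $\varphi=\varphi_{\mathcal{G},\mathcal{G}}+\varphi_{\mathcal{I},\mathcal{I}}$. If $\dim\mathcal{G}=\dim\mathcal{I}$, fix the module isomorphism $\theta:\mathcal{G}\to\mathcal{I}$ from~\eqref{theta}; then $\theta^{-1}\circ\varphi_{\mathcal{G},\mathcal{I}}$ is a $\mathcal{G}$-module endomorphism of the irreducible module $\mathcal{G}$, so by part~(ii) of Schur's Lemma it equals $\mu\,\textrm{id}_{\mathcal{G}}$ for some $\mu\in\mathbb{C}$, i.e. $\varphi_{\mathcal{G},\mathcal{I}}=\mu\,\theta$ on $\mathcal{G}$. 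To match the stated form $\omega\,\theta\circ\varphi_{\mathcal{G},\mathcal{G}}$, I would use that $\theta$ intertwines the action and that $\varphi_{\mathcal{G},\mathcal{G}}$ is a module automorphism of $\mathcal{G}$ (via $\varphi$ acting on $\mathcal{I}$ through $\varphi_{\mathcal{G},\mathcal{G}}$-twisted multiplication), so $\theta\circ\varphi_{\mathcal{G},\mathcal{G}}$ is again a module isomorphism $\mathcal{G}\to\mathcal{I}$; comparing with $\mu\theta$ via Schur gives $\varphi_{\mathcal{G},\mathcal{I}}=\omega\,\theta\circ\varphi_{\mathcal{G},\mathcal{G}}$ for a suitable scalar $\omega$. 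Hence $\varphi=\varphi_{\mathcal{G},\mathcal{G}}+\omega\,\theta\circ\varphi_{\mathcal{G},\mathcal{G}}+\varphi_{\mathcal{I},\mathcal{I}}$.

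The main obstacle I anticipate is the bookkeeping needed to show that $\varphi_{\mathcal{G},\mathcal{G}}$ and $\varphi_{\mathcal{G},\mathcal{I}}$ are genuinely \emph{homomorphisms of $\mathcal{G}$-modules} rather than merely linear maps — in particular, that the module structure on $\mathcal{I}$ seen through $\varphi$ is the original one and not a $\varphi_{\mathcal{G},\mathcal{G}}$-twist of it. Resolving this cleanly requires the identity $[\mathcal{L},\mathcal{I}]=0$ (so that only the $\mathcal{G}$-action on $\mathcal{I}$ by right multiplication matters), the observation that $\varphi_{\mathcal{G},\mathcal{G}}$ is an automorphism of $\mathcal{G}$ (so a twist by it just reparametrizes the same module up to isomorphism), and the irreducibility of both $\mathcal{G}$ and $\mathcal{I}$ to pin down the scalars. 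Once the intertwining properties are in hand, the two cases of the lemma are immediate applications of the two parts of Schur's Lemma. The only other routine point is checking that $\varphi_{\mathcal{I},\mathcal{I}}$ is invertible, which follows from $\varphi(\mathcal{I})=\mathcal{I}$ and injectivity of $\varphi$.
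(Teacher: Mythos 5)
Your proposal is correct and follows essentially the same route as the paper: derive the twisted intertwining relations $\varphi_{\mathcal{G},\mathcal{G}}([x,y])=[\varphi_{\mathcal{G},\mathcal{G}}(x),\varphi_{\mathcal{G},\mathcal{G}}(y)]$ and $\varphi_{\mathcal{G},\mathcal{I}}([x,y])=[\varphi_{\mathcal{G},\mathcal{I}}(x),\varphi_{\mathcal{G},\mathcal{G}}(y)]$ from $[\mathcal{L},\mathcal{I}]=0$, untwist by $\varphi_{\mathcal{G},\mathcal{G}}^{-1}$ so that $\varphi_{\mathcal{G},\mathcal{I}}\circ\varphi_{\mathcal{G},\mathcal{G}}^{-1}$ becomes a genuine $\mathcal{G}$-module homomorphism $\mathcal{G}\to\mathcal{I}$, and apply the two parts of Schur's Lemma. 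The paper packages this untwisting as $\eta=\varphi\circ\psi^{-1}$ with $\psi=\varphi_{\mathcal{G},\mathcal{G}}+\varphi_{\mathcal{I},\mathcal{I}}$ (first checking $\psi$ is an automorphism), which is precisely the adjustment you flag as the ``main obstacle'' and resolve in the same way; just note that your intermediate claim that $\theta^{-1}\circ\varphi_{\mathcal{G},\mathcal{I}}$ is itself a module endomorphism only becomes literally true after that precomposition with $\varphi_{\mathcal{G},\mathcal{G}}^{-1}$.
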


\begin{proof}
Let $x,y\in \mathcal{G}$, then
\begin{eqnarray*}
\varphi_{\mathcal{G},\mathcal{G}}([x,y])+\varphi_{\mathcal{G},\mathcal{I}}([x,y])&=&
\varphi([x,y])=[\varphi(x),\varphi(y)]=\\
&=&
[\varphi_{\mathcal{G},\mathcal{G}}(x)+\varphi_{\mathcal{G},\mathcal{I}}(x),
\varphi_{\mathcal{G},\mathcal{G}}(y)+\varphi_{\mathcal{G},\mathcal{I}}(y)]=\\
&=&[\varphi_{\mathcal{G},\mathcal{G}}(x),\varphi_{\mathcal{G},\mathcal{G}}(y)]+[\varphi_{\mathcal{G},\mathcal{I}}(x),
\varphi_{\mathcal{G},\mathcal{G}}(y)].
\end{eqnarray*}
This implies
\begin{eqnarray}\label{ggggg}
\varphi_{\mathcal{G},\mathcal{G}}([x,y])=[\varphi_{\mathcal{G},\mathcal{G}}(x),\varphi_{\mathcal{G},\mathcal{G}}(y)],
\end{eqnarray}
\begin{eqnarray}\label{gigigi}
\varphi_{\mathcal{G},\mathcal{I}}([x,y])=[\varphi_{\mathcal{G},\mathcal{I}}(x),
\varphi_{\mathcal{G},\mathcal{G}}(y)].
\end{eqnarray}

Set
\[
\psi=\varphi_{\mathcal{G},\mathcal{G}}+\varphi_{\mathcal{I},\mathcal{I}}.
\]
Let us show that \(\psi\) is also an automorphism. Indeed,
\begin{eqnarray*}
\psi([x+i,
y+j])&=&\psi([x,y]+[i,y])=\varphi_{\mathcal{G},\mathcal{G}}([x,y])+\varphi_{\mathcal{I},\mathcal{I}}([i,y])=\\
&=&
[\varphi_{\mathcal{G},\mathcal{G}}(x),\varphi_{\mathcal{G},\mathcal{G}}(y)]+\varphi([i,y])=[\varphi_{\mathcal{G},\mathcal{G}}(x),
\varphi_{\mathcal{G},\mathcal{G}}(y)]+[\varphi(i),
\varphi(y)]=\\
&=&
[\varphi_{\mathcal{G},\mathcal{G}}(x),\varphi_{\mathcal{G},\mathcal{G}}(y)]+[\varphi_{\mathcal{I},\mathcal{I}}(i),
\varphi_{\mathcal{G},\mathcal{G}}(y)+\varphi_{\mathcal{G},\mathcal{I}}(y)]=\\
&=&
[\varphi_{\mathcal{G},\mathcal{G}}(x),\varphi_{\mathcal{G},\mathcal{G}}(y)]+[\varphi_{\mathcal{I},\mathcal{I}}(i),
\varphi_{\mathcal{G},\mathcal{G}}(y)]=\\&=&
[\varphi_{\mathcal{G},\mathcal{G}}(x)+\varphi_{\mathcal{I},\mathcal{I}}(i),\varphi_{\mathcal{G},\mathcal{G}}(y)+
\varphi_{\mathcal{I},\mathcal{I}}(j)]=[\psi(x+i),\psi(y+j)].
\end{eqnarray*}

 Now consider an automorphism
\[
\eta=\varphi\circ\psi^{-1}.
\] Then
\[
\eta(x+i)=x+\eta_{\mathcal{G},\mathcal{I}}(x)+i,
\]
where
\(\eta_{\mathcal{G},\mathcal{I}}=\varphi_{\mathcal{G},\mathcal{I}}\circ\varphi^{-1}_{\mathcal{G},\mathcal{G}}.\)
Applying \eqref{ggggg} and \eqref{gigigi} to \(\eta\) we obtain that
\[
\eta_{\mathcal{G},\mathcal{I}}([x,y])=[\eta_{\mathcal{G},\mathcal{I}}(x),
y].
\]
This means that \(\eta_{\mathcal{G},\mathcal{I}}\) is a
\(\mathcal{G}\)-module homomorphism from \(\mathcal{G}\) into
\(\mathcal{I}.\)

Case 1.  Let \(\dim \mathcal{G}\neq \dim \mathcal{I}.\) In this
case   by Schur's Lemma we obtain that
\(\eta_{\mathcal{G},\mathcal{I}}=0.\) Now the equality
\(\eta_{\mathcal{G},\mathcal{I}}=\varphi_{\mathcal{G},\mathcal{I}}\circ\varphi^{-1}_{\mathcal{G},\mathcal{G}}\)
implies that \(\varphi_{\mathcal{G},\mathcal{I}}=0.\) Thus
\[
\varphi=\varphi_{\mathcal{G},\mathcal{G}}+\varphi_{\mathcal{I},\mathcal{I}}.
\]

Case 2. Let \(\dim \mathcal{G}= \dim \mathcal{I}.\) In this case
again  by Schur's Lemma we obtain that
\(\eta_{\mathcal{G},\mathcal{I}}=\omega \theta,\) where
\(\omega\in \mathbb{C}.\) Thus
\(\varphi_{\mathcal{G},\mathcal{I}}=\eta_{\mathcal{G},\mathcal{I}}
\circ\varphi_{\mathcal{G},\mathcal{G}}=\omega \theta
\circ\varphi_{\mathcal{G},\mathcal{G}},\) and therefore
\[
\varphi=\varphi_{\mathcal{G},\mathcal{G}}+\omega \theta
\circ\varphi_{\mathcal{G},\mathcal{G}}+\varphi_{\mathcal{I},\mathcal{I}}.
\]
The proof is complete. \end{proof}

Further we shall use the following lemma.

\begin{lm}\label{iden}
Let \(\varphi\in \textrm{Aut}(\mathcal{L})\) be an automorphism.
Then \begin{itemize}
\item[a)]  if
\(\varphi_{\mathcal{G},\mathcal{G}}=id_\mathcal{G},\) then
\(\varphi_{\mathcal{I},\mathcal{I}}=\lambda id_\mathcal{I};\)
\item[b)]  if
\(\varphi_{\mathcal{I},\mathcal{I}}=id_\mathcal{I},\) then
\(\varphi_{\mathcal{G},\mathcal{G}}= id_\mathcal{G}.\)
\end{itemize}
\end{lm}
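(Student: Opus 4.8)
The plan is to exploit the decomposition \(\varphi = \varphi_{\mathcal{G},\mathcal{G}} + \varphi_{\mathcal{G},\mathcal{I}} + \varphi_{\mathcal{I},\mathcal{I}}\) from Lemma~\ref{decoat} together with the two structural facts recalled in the Preliminaries: every element of \(\mathcal{L}\) annihilates \(\mathcal{I}\) from the right, so in particular \([\mathcal{I},\mathcal{I}]=0\), and \(\mathcal{I}\) is a \(\mathcal{G}\)-submodule, i.e. \([\mathcal{I},\mathcal{G}]\subseteq\mathcal{I}\). First I would write the automorphism identity \(\varphi([i,h]) = [\varphi(i),\varphi(h)]\) for \(i\in\mathcal{I}\) and \(h\in\mathcal{G}\). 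Since \([i,h]\in\mathcal{I}\), the left-hand side equals \(\varphi_{\mathcal{I},\mathcal{I}}([i,h])\); on the right \(\varphi(i)=\varphi_{\mathcal{I},\mathcal{I}}(i)\in\mathcal{I}\) and \(\varphi(h)=\varphi_{\mathcal{G},\mathcal{G}}(h)+\varphi_{\mathcal{G},\mathcal{I}}(h)\), and the summand \(\varphi_{\mathcal{G},\mathcal{I}}(h)\in\mathcal{I}\) contributes nothing because \([\mathcal{I},\mathcal{I}]=0\). Hence for all \(i\in\mathcal{I}\), \(h\in\mathcal{G}\) one obtains the single identity
\[
\varphi_{\mathcal{I},\mathcal{I}}([i,h]) = [\varphi_{\mathcal{I},\mathcal{I}}(i),\,\varphi_{\mathcal{G},\mathcal{G}}(h)],
\]
which drives both parts of the lemma.

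For part a), substituting \(\varphi_{\mathcal{G},\mathcal{G}}=id_\mathcal{G}\) turns this into \(\varphi_{\mathcal{I},\mathcal{I}}([i,h]) = [\varphi_{\mathcal{I},\mathcal{I}}(i),h]\), which says precisely that the linear map \(\varphi_{\mathcal{I},\mathcal{I}}\colon \mathcal{I}\to\mathcal{I}\) commutes with the right \(\mathcal{G}\)-action, i.e. it is a \(\mathcal{G}\)-module endomorphism of the irreducible \(\mathcal{G}\)-module \(\mathcal{I}\). Schur's Lemma (Theorem~\ref{SL}(ii)) then yields \(\varphi_{\mathcal{I},\mathcal{I}} = \lambda\,id_\mathcal{I}\) for some \(\lambda\in\mathbb{C}\), as required.

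For part b), substituting \(\varphi_{\mathcal{I},\mathcal{I}}=id_\mathcal{I}\) into the same identity gives \([i,h] = [i,\varphi_{\mathcal{G},\mathcal{G}}(h)]\), hence \([i,\,h-\varphi_{\mathcal{G},\mathcal{G}}(h)]=0\) for all \(i\in\mathcal{I}\) and \(h\in\mathcal{G}\). Put \(K = \{w\in\mathcal{G} : [\mathcal{I},w]=0\}\). I would then verify, by the kind of Leibniz-identity manipulation used in Lemma~\ref{fourth}, that \(K\) is an ideal of \(\mathcal{L}\) contained in \(\mathcal{G}\): from \([i,[w,w']] = [[i,w],w']-[[i,w'],w]\) one sees \([\mathcal{I},[w,w']]=0\) when \(w\in K\) (using \([i,w']\in\mathcal{I}\) and \([\mathcal{I},w]=0\)), so \(K\) is an ideal of \(\mathcal{G}\), and \([\mathcal{I},K]=0\), \([K,\mathcal{I}]\subseteq[\mathcal{L},\mathcal{I}]=0\) make it an ideal of \(\mathcal{L}\). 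Since \(\mathcal{L}\) is simple its only ideals are \(\{0\},\mathcal{I},\mathcal{L}\), and because \(\mathcal{I}\neq\{0\}\) neither \(\mathcal{I}\) nor \(\mathcal{L}\) lies inside \(\mathcal{G}\); therefore \(K=\{0\}\). Consequently \(h-\varphi_{\mathcal{G},\mathcal{G}}(h)=0\) for every \(h\in\mathcal{G}\), i.e. \(\varphi_{\mathcal{G},\mathcal{G}}=id_\mathcal{G}\).

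The only genuinely delicate point is the claim that \(K\) is an ideal of \(\mathcal{L}\) (equivalently, that the annihilator of \(\mathcal{I}\) in \(\mathcal{G}\) is trivial, i.e. \(\mathcal{I}\) is a faithful \(\mathcal{G}\)-module): this rests on the Leibniz identity together with \([\mathcal{L},\mathcal{I}]=0\) and \([\mathcal{I},\mathcal{G}]\subseteq\mathcal{I}\), and is where one must be careful about which brackets vanish. Everything else reduces to a one-line substitution into the automorphism identity followed by an appeal to Schur's Lemma, so beyond that bookkeeping I expect no real obstacle.
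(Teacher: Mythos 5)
Your proof is correct and follows essentially the same route as the paper: both parts rest on the identity \(\varphi_{\mathcal{I},\mathcal{I}}([i,x])=[\varphi_{\mathcal{I},\mathcal{I}}(i),\varphi_{\mathcal{G},\mathcal{G}}(x)]\) (obtained, as you note, because \(\varphi(\mathcal{I})=\mathcal{I}\) and \([\mathcal{L},\mathcal{I}]=0\) kills the \(\varphi_{\mathcal{G},\mathcal{I}}\) contribution), with part a) settled by Schur's Lemma and part b) by showing that the annihilator of \(\mathcal{I}\) in \(\mathcal{G}\) is trivial. The only difference is cosmetic: in b) the paper works with the ideal of \(\mathcal{G}\) generated by the single element \(\varphi_{\mathcal{G},\mathcal{G}}(x)-x\) and appeals to irreducibility of \(\mathcal{I}\) exactly as in Lemma~\ref{fourth}, whereas you package the whole annihilator \(K\) as an ideal of \(\mathcal{L}\) and appeal to simplicity --- both reduce to the same Leibniz-identity computation, and your variant is, if anything, slightly more self-contained.
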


\begin{proof} a) Similar to the proof of \eqref{gigigi} we obtain that
\begin{eqnarray*}
\varphi_{\mathcal{I},\mathcal{I}}([i,x])=[\varphi_{\mathcal{I},\mathcal{I}}(i),
\varphi_{\mathcal{G},\mathcal{G}}(x)]=[\varphi_{\mathcal{I},\mathcal{I}}(i),
x]
\end{eqnarray*}
for all \(i\in \mathcal{I}, x\in \mathcal{G}.\) By Schur's Lemma
we obtain that \(\varphi_{\mathcal{I},\mathcal{I}}=\lambda
id_\mathcal{I}.\)

b) Since \begin{eqnarray*} [i,
x]=\varphi_{\mathcal{I},\mathcal{I}}([i,x])=[\varphi_{\mathcal{I},\mathcal{I}}(i),
\varphi_{\mathcal{G},\mathcal{G}}(x)]=[i,
\varphi_{\mathcal{G},\mathcal{G}}(x)],
\end{eqnarray*}
we obtain that \([i, \varphi_{\mathcal{G},\mathcal{G}}(x)-x]=0\)
for all \(i\in \mathcal{I}, x\in \mathcal{G}.\) As in the proof of
Lemma~\ref{fourth} we have \([\mathcal{I},
\mathcal{G}_{\varphi_{\mathcal{G},\mathcal{G}}(x)-x}]=0,\) and
\(\varphi_{\mathcal{G},\mathcal{G}}(x)-x=0.\)  The proof is
complete.
\end{proof}

\begin{lm}\label{form}
Let $\mathcal{L}=\mathcal{G}\dot{+} \mathcal{I}$ be a simple
complex Leibniz algebra with \(\dim \mathcal{G}=\dim
\mathcal{I}.\) Then any automorphism  \(\varphi\in
\textrm{Aut}(\mathcal{L})\) can be  represented as
\begin{eqnarray*}
\varphi=\varphi_{\mathcal{G},\mathcal{G}}+\omega \theta
\circ\varphi_{\mathcal{G},\mathcal{G}}+\lambda
\theta\circ\varphi_{\mathcal{G},\mathcal{G}}\circ\theta^{-1},
\end{eqnarray*}
where \(\omega\in \mathbb{C}\) and \(0\neq\lambda\in \mathbb{C}.\)
\end{lm}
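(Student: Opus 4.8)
The plan is to bootstrap from Lemma~\ref{decoat}: in the case \(\dim\mathcal{G}=\dim\mathcal{I}\) that lemma already yields
\[
\varphi=\varphi_{\mathcal{G},\mathcal{G}}+\omega\,\theta\circ\varphi_{\mathcal{G},\mathcal{G}}+\varphi_{\mathcal{I},\mathcal{I}},
\]
so the only thing left is to identify the component \(\varphi_{\mathcal{I},\mathcal{I}}\colon\mathcal{I}\to\mathcal{I}\), namely to show \(\varphi_{\mathcal{I},\mathcal{I}}=\lambda\,\theta\circ\varphi_{\mathcal{G},\mathcal{G}}\circ\theta^{-1}\) for some \(\lambda\in\mathbb{C}\setminus\{0\}\). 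Throughout I regard \(\theta\) as the linear isomorphism \(\mathcal{G}\to\mathcal{I}\) with \(\theta([x,y])=[\theta(x),y]\); since \(\dim\mathcal{G}=\dim\mathcal{I}\) it is bijective, and \(\theta^{-1}\colon\mathcal{I}\to\mathcal{G}\) is then again a \(\mathcal{G}\)-module homomorphism, so that \(\Theta:=\theta\circ\varphi_{\mathcal{G},\mathcal{G}}\circ\theta^{-1}\) is a well-defined linear bijection of \(\mathcal{I}\).

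The crucial observation is that \(\varphi_{\mathcal{I},\mathcal{I}}\) and \(\Theta\) satisfy one and the same twisted intertwining identity. For \(\varphi_{\mathcal{I},\mathcal{I}}\) this is the relation already established in the proof of Lemma~\ref{iden}: applying \(\varphi\) to \([i,y]\in\mathcal{I}\) with \(i\in\mathcal{I}\), \(y\in\mathcal{G}\), and using \([\mathcal{I},\mathcal{I}]=0\), one gets
\[
\varphi_{\mathcal{I},\mathcal{I}}([i,y])=[\varphi_{\mathcal{I},\mathcal{I}}(i),\varphi_{\mathcal{G},\mathcal{G}}(y)].
\]
For \(\Theta\), writing \(i=\theta(x)\) and using successively that \(\theta^{-1}\) is a module map, that \(\varphi_{\mathcal{G},\mathcal{G}}\) is an algebra automorphism of \(\mathcal{G}\), and that \(\theta\) is a module map, one checks \(\Theta([i,y])=[\Theta(i),\varphi_{\mathcal{G},\mathcal{G}}(y)]\); inverting, \(\Theta^{-1}([j,\varphi_{\mathcal{G},\mathcal{G}}(y)])=[\Theta^{-1}(j),y]\).

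Consequently \(\Theta^{-1}\circ\varphi_{\mathcal{I},\mathcal{I}}\colon\mathcal{I}\to\mathcal{I}\) is an honest \(\mathcal{G}\)-module endomorphism, since for \(i\in\mathcal{I}\), \(y\in\mathcal{G}\),
\[
(\Theta^{-1}\circ\varphi_{\mathcal{I},\mathcal{I}})([i,y])=\Theta^{-1}\bigl([\varphi_{\mathcal{I},\mathcal{I}}(i),\varphi_{\mathcal{G},\mathcal{G}}(y)]\bigr)=[(\Theta^{-1}\circ\varphi_{\mathcal{I},\mathcal{I}})(i),y].
\]
By Schur's Lemma (Theorem~\ref{SL}(ii)) there is \(\lambda\in\mathbb{C}\) with \(\Theta^{-1}\circ\varphi_{\mathcal{I},\mathcal{I}}=\lambda\, id_{\mathcal{I}}\), and since \(\varphi_{\mathcal{I},\mathcal{I}}\) and \(\Theta\) are bijective, \(\lambda\neq0\). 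Thus \(\varphi_{\mathcal{I},\mathcal{I}}=\lambda\Theta=\lambda\,\theta\circ\varphi_{\mathcal{G},\mathcal{G}}\circ\theta^{-1}\); substituting into the decomposition from Lemma~\ref{decoat} gives the asserted formula.

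The only point requiring care is the bookkeeping of the \(\varphi_{\mathcal{G},\mathcal{G}}\)-twist: since \(\varphi_{\mathcal{G},\mathcal{G}}\) is merely an automorphism of \(\mathcal{G}\) and not a module map, neither \(\varphi_{\mathcal{I},\mathcal{I}}\) nor \(\Theta\) intertwines the plain \(\mathcal{G}\)-action — each intertwines the action with the acting element twisted by \(\varphi_{\mathcal{G},\mathcal{G}}\) — and one must verify that these two twisted intertwiners are of exactly the same type so that \(\Theta^{-1}\circ\varphi_{\mathcal{I},\mathcal{I}}\) lands back among the genuine module endomorphisms of \(\mathcal{I}\), to which Schur's Lemma applies.
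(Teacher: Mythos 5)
Your proof is correct and follows essentially the same route as the paper: both arguments reduce the identification of $\varphi_{\mathcal{I},\mathcal{I}}$ to Schur's Lemma applied to the composite of $\varphi_{\mathcal{I},\mathcal{I}}$ with $\Theta^{-1}=\theta\circ\varphi_{\mathcal{G},\mathcal{G}}^{-1}\circ\theta^{-1}$, using the twisted intertwining relation $f([i,y])=[f(i),\varphi_{\mathcal{G},\mathcal{G}}(y)]$ shared by both maps. The paper merely packages this as the automorphism $\eta=\psi\circ\phi^{-1}$ (first checking that $\phi=\varphi_{\mathcal{G},\mathcal{G}}+\lambda\,\theta\circ\varphi_{\mathcal{G},\mathcal{G}}\circ\theta^{-1}$ is an automorphism) and then cites Lemma~\ref{iden}, whereas you verify the intertwining directly — a harmless streamlining.
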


\begin{proof}
Let \(\varphi_{\mathcal{G},\mathcal{G}}\) be an automorphism of
\(\mathcal{G}.\) Let us show that
\(\phi=\varphi_{\mathcal{G},\mathcal{G}}+\lambda
\theta\circ\varphi_{\mathcal{G},\mathcal{G}}\circ\theta^{-1}\) is
an automorphism. It suffices to check that
\[
\phi([i, x])=[\phi(i), \phi(x)]
\]
for all \(x\in \mathcal{G},\) \(i\in \mathcal{I}.\) Since
\(\theta\) is a \(\mathcal{G}\)-module isomorphism, it follows
that \(\theta([\theta^{-1}(i), x])=[i,x].\) We have
\begin{eqnarray*}
\phi([i,x]) & = & \lambda
\theta\circ\varphi_{\mathcal{G},\mathcal{G}}\circ\theta^{-1}([i,x])=\lambda
\theta\circ\varphi_{\mathcal{G},\mathcal{G}}\circ\theta^{-1}(\theta([\theta^{-1}(i),
x])=\\
&=& \lambda
\theta\circ\varphi_{\mathcal{G},\mathcal{G}}([\theta^{-1}(i),x])=\lambda
\theta([\varphi_{\mathcal{G},\mathcal{G}}(\theta^{-1}(i)),\varphi_{\mathcal{G},\mathcal{G}}(x)])=\\
&=&
[(\lambda\theta\circ\varphi_{\mathcal{G},\mathcal{G}}\circ\theta^{-1})(i),\varphi_{\mathcal{G},\mathcal{G}}(x)]=
[\phi(i),\phi(x)].
\end{eqnarray*}

Let us consider
\begin{eqnarray*}
\varphi=\varphi_{\mathcal{G},\mathcal{G}}+\omega
\theta\circ\varphi_{\mathcal{G},\mathcal{G}}+\varphi_{\mathcal{I},\mathcal{I}}.
\end{eqnarray*}
Set
\begin{center}
\(\psi=\varphi_{\mathcal{G},\mathcal{G}}+
\varphi_{\mathcal{I},\mathcal{I}},\,\)
\(\phi=\varphi_{\mathcal{G},\mathcal{G}}+
\theta\circ\varphi_{\mathcal{G},\mathcal{G}}\circ\theta^{-1}\) and
\(\eta=\psi\circ\phi^{-1}.\)
\end{center}
Then
\(\eta=\textrm{id}_{\mathcal{G}}+\eta_{\mathcal{I},\mathcal{I}},\)
and therefore by Lemma~\ref{iden}, it follows that
\(\eta_{\mathcal{I},\mathcal{I}}=\lambda
\textrm{id}_\mathcal{I}.\) Thus
\[
\psi=\eta\circ\phi=(\textrm{id}_{\mathcal{G}}+\lambda
\textrm{id}_\mathcal{I})\circ(\varphi_{\mathcal{G},\mathcal{G}}+
\theta\circ\varphi_{\mathcal{G},\mathcal{G}}\circ\theta^{-1})=\varphi_{\mathcal{G},\mathcal{G}}+\lambda
\theta\circ\varphi_{\mathcal{G},\mathcal{G}}\circ\theta^{-1}.
\]
Hence
\[
\varphi=\varphi_{\mathcal{G},\mathcal{G}}+\omega \theta
\circ\varphi_{\mathcal{G},\mathcal{G}}+\lambda
\theta\circ\varphi_{\mathcal{G},\mathcal{G}}\circ\theta^{-1}.
\]
The proof is complete.
\end{proof}

\section{Local  and 2-Local automorphisms on simple Leibniz algebras}
\label{six}

\subsection{2-Local automorphisms of simple Leibniz algebras}

\

Let  $\mathcal{L}=\mathcal{G}\dot{+} \mathcal{I}$ be  a complex
simple Leibniz algebra. Then any $\varphi\in Aut(\mathcal{L})$
decomposes into
\begin{eqnarray*}
\varphi=\varphi_{\mathcal{G},\mathcal{G}}+\varphi_{\mathcal{G},\mathcal{I}}+\varphi_{\mathcal{I},\mathcal{I}},
\end{eqnarray*}
\begin{eqnarray*}
\varphi_{\mathcal{G},\mathcal{I}}=\omega\theta\circ\varphi_{\mathcal{G},\mathcal{G}}
\end{eqnarray*}
where \(\omega\in \mathbb{C}\) and
$\varphi_{\mathcal{G},\mathcal{I}}$ is assumed to be zero when
$\dim \mathcal{G}\neq \dim \mathcal{I}.$

\begin{lm}\label{lm01}
Let $\mathcal{L}=\mathcal{G}\dot{+}\mathcal{I}$ be a simple
Leibniz algebra and let $\varphi\in Aut(\mathcal{L})$ be such that
$\varphi(h_0)=h_0,$ where \(h_0\) is a strongly regular element
from \(\mathcal{H}.\) Then
\begin{itemize}
\item[a)] \(\varphi(e_\alpha)=t_\alpha e_\alpha\)
and  \(\varphi(e_{-\alpha})=t_\alpha^{-1} e_{-\alpha},\) where
\(t_\alpha\in \mathbb{C}\) for all \(\alpha\in \Phi,\)
\item[b)]  \(\varphi(h)=h\) for all \(h\in  \mathcal{H}.\)
\end{itemize}
\end{lm}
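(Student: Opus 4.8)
The plan is to reduce the statement to a fact about automorphisms of the simple Lie algebra $\mathcal{G}$ and then invoke the structure of the root space decomposition. By Lemma~\ref{decoat} (together with Lemma~\ref{form} when $\dim\mathcal{G}=\dim\mathcal{I}$), the automorphism $\varphi$ is determined by its component $\varphi_{\mathcal{G},\mathcal{G}}$, which is an automorphism of $\mathcal{G}$. Since $\varphi_{\mathcal{G},\mathcal{I}}(\mathcal{G})\subseteq\mathcal{I}$ and $\varphi_{\mathcal{I},\mathcal{I}}(\mathcal{I})\subseteq\mathcal{I}$, the hypothesis $\varphi(h_0)=h_0$ forces $\varphi_{\mathcal{G},\mathcal{G}}(h_0)=h_0$ (the $\mathcal{I}$-components must vanish). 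So it suffices to prove the analogous statement for the simple Lie algebra $\mathcal{G}$: if an automorphism $\psi$ of $\mathcal{G}$ fixes the strongly regular element $h_0$, then $\psi(e_\alpha)=t_\alpha e_\alpha$, $\psi(e_{-\alpha})=t_\alpha^{-1}e_{-\alpha}$, and $\psi|_{\mathcal{H}}=\mathrm{id}$.

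First I would show $\psi(\mathcal{H})=\mathcal{H}$. Since $h_0$ is regular, $\mathcal{H}=\{x\in\mathcal{G}:[h_0,x]=0\}$ is the centralizer of $h_0$; applying $\psi$ and using $\psi(h_0)=h_0$ shows $\psi$ maps this centralizer onto itself, so $\psi(\mathcal{H})=\mathcal{H}$. Next, for $x\in\mathcal{G}_\alpha$ we have $[h_0,x]=\alpha(h_0)x$, hence $[h_0,\psi(x)]=\alpha(h_0)\psi(x)$, so $\psi(x)$ lies in the eigenspace of $\mathrm{ad}\,h_0$ for the eigenvalue $\alpha(h_0)$. Because $h_0$ is \emph{strongly} regular, the numbers $\{\alpha(h_0):\alpha\in\Phi\}$ are pairwise distinct and nonzero, so each such eigenspace is exactly one root space $\mathcal{G}_\alpha$ (which is one-dimensional for the simple Lie algebra $\mathcal{G}$). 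Therefore $\psi(\mathcal{G}_\alpha)=\mathcal{G}_\alpha$ for every $\alpha\in\Phi$, which gives $\psi(e_\alpha)=t_\alpha e_\alpha$ for some nonzero $t_\alpha\in\mathbb{C}$.

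To pin down $\psi|_{\mathcal{H}}$ and the relation between $t_\alpha$ and $t_{-\alpha}$, I would use the $\mathfrak{sl}_2$-triples. For each $\alpha\in\Phi$ we have $h_\alpha=[e_\alpha,e_{-\alpha}]\in\mathcal{H}$ (with $\alpha(h_\alpha)\neq 0$), and applying $\psi$ gives $\psi(h_\alpha)=[\psi(e_\alpha),\psi(e_{-\alpha})]=t_\alpha t_{-\alpha}h_\alpha$. On the other hand, evaluating the root $\alpha$ on the fixed element $h_0$ and writing $h_0$ as a combination of the coroots $h_\alpha$, one sees $\psi(h_0)=h_0$ together with $\psi$ acting by the scalars $t_\alpha t_{-\alpha}$ on the $h_\alpha$ forces $t_\alpha t_{-\alpha}=1$ for all $\alpha$ and $\psi(h)=h$ for all $h\in\mathcal{H}$; the cleanest way is to note that $\{h_\alpha\}$ spans $\mathcal{H}$, pick a basis of coroots, express $h_0=\sum c_i h_{\alpha_i}$ with all $c_i\neq 0$ (possible since $h_0$ is regular), and deduce $t_{\alpha_i}t_{-\alpha_i}=1$; then $[h_0,\cdot]$-eigenvalue bookkeeping extends this to all roots and yields $\psi|_{\mathcal H}=\mathrm{id}$. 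This proves (a) and (b) for $\mathcal{G}$, and hence, via the decomposition of $\varphi$, for $\mathcal{L}$.

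The main obstacle is the bookkeeping in the last step: one must be careful that $\psi|_{\mathcal H}$ is genuinely the identity and not merely a permutation-type symmetry. Strong regularity of $h_0$ is exactly what rules out the nontrivial case — it guarantees both that the $\mathrm{ad}\,h_0$-eigenspaces separate the roots and that $h_0$ is a "generic" combination of coroots, so that fixing $h_0$ propagates to fixing all of $\mathcal{H}$. I expect the only real care needed is to justify expressing $h_0$ as a combination of coroots with all nonzero coefficients and to handle the case $\dim\mathcal{G}=\dim\mathcal{I}$, where the extra component $\omega\theta\circ\varphi_{\mathcal{G},\mathcal{G}}$ and the $\mathcal{I}$-automorphism $\varphi_{\mathcal{I},\mathcal{I}}$ must be checked not to interfere with the conclusions about $e_{\pm\alpha}$ and $h$, which follows since $\theta$ intertwines the $\mathcal{G}$-actions.
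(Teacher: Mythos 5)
Your reduction to the simple Lie algebra $\mathcal{G}$ is exactly what the paper does: from $h_0=\varphi(h_0)=\varphi_{\mathcal{G},\mathcal{G}}(h_0)+\varphi_{\mathcal{G},\mathcal{I}}(h_0)$ one gets $\varphi_{\mathcal{G},\mathcal{G}}(h_0)=h_0$ and $\varphi_{\mathcal{G},\mathcal{I}}(h_0)=0$; since $\varphi_{\mathcal{G},\mathcal{I}}=\omega\,\theta\circ\varphi_{\mathcal{G},\mathcal{G}}$ and $\theta$ is injective, this forces $\omega=0$, i.e.\ $\varphi_{\mathcal{G},\mathcal{I}}\equiv 0$ --- a point you should state explicitly, since otherwise $\varphi(e_\alpha)$ could a priori still have a nonzero $\mathcal{I}$-component and assertion a) would not follow. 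After this reduction the paper simply cites \cite[Lemma~2.2]{AK2016A} for a) and b), whereas you prove the Lie-algebra statement from scratch; your eigenspace argument showing $\psi(\mathcal{H})=\mathcal{H}$ and $\psi(\mathcal{G}_\alpha)=\mathcal{G}_\alpha$ (using that strong regularity makes the values $\alpha(h_0)$ pairwise distinct) is correct and standard.

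The genuine gap is in your last step. You want to write $h_0=\sum_i c_ih_{\alpha_i}$ in the basis of simple coroots with \emph{all} $c_i\neq 0$, justified by (strong) regularity of $h_0$. That is false: $c_i=\lambda_i(h_0)$, where $\lambda_i$ is the $i$-th fundamental weight, and the hyperplane $\ker\lambda_i$ need not be among the hyperplanes $\ker\alpha$, $\ker(\alpha-\beta)$ whose removal defines (strong) regularity. For instance, in $\mathfrak{sl}_3$ the element $h_{\alpha_1}$ is regular yet has $c_2=0$; and in $\mathfrak{sl}_5$ one can even find \emph{strongly} regular elements with $c_1=0$, because there $\lambda_1$ is proportional neither to a root nor to a difference of two roots, so $\ker\lambda_1$ is not contained in the union of the excluded hyperplanes. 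Hence the deduction of $t_{\alpha_i}t_{-\alpha_i}=1$ and $\psi|_{\mathcal{H}}=\mathrm{id}$ does not go through as written. The repair is standard and needs nothing beyond what you have already established: since $\psi(\mathcal{H})=\mathcal{H}$ and $\psi(e_\alpha)=t_\alpha e_\alpha$, applying $\psi$ to $[h,e_\alpha]=\alpha(h)e_\alpha$ gives $\alpha(\psi(h))=\alpha(h)$ for every $\alpha\in\Phi$ and $h\in\mathcal{H}$; as $\Phi$ spans $\mathcal{H}^{*}$ this yields $\psi|_{\mathcal{H}}=\mathrm{id}$, and then $h_\alpha=\psi(h_\alpha)=[\psi(e_\alpha),\psi(e_{-\alpha})]=t_\alpha t_{-\alpha}h_\alpha$ gives $t_\alpha t_{-\alpha}=1$.
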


\begin{proof}
Let \(
\varphi=\varphi_{\mathcal{G},\mathcal{G}}+\varphi_{\mathcal{G},\mathcal{I}}+\varphi_{\mathcal{I},\mathcal{I}}.
\) Since
\begin{eqnarray*}
h_0=\varphi(h_0)= \varphi_{\mathcal{G},\mathcal{G}}(h_0)+
\varphi_{\mathcal{G},\mathcal{I}}(h_0),
\end{eqnarray*}
it follows that \(\varphi_{\mathcal{G},\mathcal{G}}(h_0)=h_0\) and
\(\varphi_{\mathcal{G},\mathcal{I}}(h_0)=0\) (that is $\theta(h_0)=0$). Thus
\(\varphi_{\mathcal{G},\mathcal{I}}\equiv 0.\) Now assertions a)
and b) follows from \cite[Lemma 2.2]{AK2016A}. The proof is
complete.
\end{proof}

Let \(\mathcal{H}\) be a Cartan subalgebra of  $\mathcal{G}$ and
let
$$
\mathcal{G}=\mathcal{H}\oplus\bigoplus_{\alpha \in
\Phi}\mathcal{G}_{\alpha}, \quad \mathcal{I}=\bigoplus_{\beta\in \Gamma}\mathcal{I}_\beta.
$$
It is known \cite[P. 108]{Hum} that the $\mathcal{G}$-module
$\mathcal{I}$ is generated by the elements of the form
\begin{eqnarray}\label{for03}
[\dots[[i^{+}, e_{-\alpha_1}], e_{-\alpha_2}], \dots,
e_{-\alpha_k}],
\end{eqnarray}
where $i^{+}$ is a highest weight vector of $\mathcal{G}$-module
$\mathcal{I},$ $e_{-\alpha_i}\in \mathcal{G}_{-\alpha_i}$ and
$\alpha_i \in \Phi$ is a  positive   root for all \(i=1,\ldots,
k,\) \(k\in \mathbb{N}.\)

Let $\dim \mathcal{I}_\beta=s_\beta, \ \beta\in \Gamma$ and let
$\left\{i^{(1)}_\beta, \ldots, i^{(s_\beta)}_\beta\right\}$ be a
basis of $\mathcal{I}_\beta,$ consisting of  elements of the form
\eqref{for03}. Set
\begin{eqnarray}\label{inol}
i_0=\sum_{\beta\in \Gamma}\sum_{k=1}^{s_\beta}i^{(k)}_{l_\beta}.
\end{eqnarray}

\begin{lm} \label{lm02} Let $\mathcal{L}=\mathcal{G}\dot{+}
\mathcal{I}$ be a simple Leibniz algebra with  \(\dim
\mathcal{G}\neq\dim \mathcal{I}\) and let $\varphi\in
Aut(\mathcal{L})$ be such that $\varphi(h_0+i_0)=h_0+i_0.$ Then
$\varphi$ is an identity automorphism of $\mathcal{L}.$
\end{lm}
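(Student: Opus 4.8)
The plan is to exploit the decomposition $\varphi=\varphi_{\mathcal{G},\mathcal{G}}+\varphi_{\mathcal{I},\mathcal{I}}$ valid in the case $\dim\mathcal{G}\neq\dim\mathcal{I}$ (Lemma~\ref{decoat}), together with the fact that from $\varphi(h_0+i_0)=h_0+i_0$ one reads off $\varphi(h_0)=h_0$, so Lemma~\ref{lm01} applies to $\varphi_{\mathcal{G},\mathcal{G}}$. First I would use Lemma~\ref{lm01}(a),(b): on the Lie part $\varphi_{\mathcal{G},\mathcal{G}}$ fixes $\mathcal{H}$ pointwise and acts on each root vector by a scalar, $\varphi_{\mathcal{G},\mathcal{G}}(e_\alpha)=t_\alpha e_\alpha$ with $t_\alpha t_{-\alpha}=1$. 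The remaining freedom in $\varphi$ is then the choice of scalars $t_\alpha$ and the module isomorphism $\varphi_{\mathcal{I},\mathcal{I}}$; I must show the condition $\varphi(i_0)=i_0$ forces all $t_\alpha=1$ and $\varphi_{\mathcal{I},\mathcal{I}}=\mathrm{id}$.

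Next I would track the action of $\varphi_{\mathcal{I},\mathcal{I}}$ on the weight decomposition $\mathcal{I}=\bigoplus_{\beta}\mathcal{I}_\beta$. Since $\varphi_{\mathcal{G},\mathcal{G}}$ fixes $\mathcal{H}$, the map $\varphi_{\mathcal{I},\mathcal{I}}$ commutes with the $\mathcal{H}$-action and hence preserves each weight space $\mathcal{I}_\beta$. The key computation is to express how $\varphi_{\mathcal{I},\mathcal{I}}$ transforms the chosen basis vectors $i^{(k)}_\beta$: each is obtained from a highest weight vector $i^+$ by successive brackets with $e_{-\alpha_j}$, and $\varphi$ being an automorphism gives
\[
\varphi\big([\dots[[i^{+},e_{-\alpha_1}],\dots],e_{-\alpha_k}]\big)=[\dots[[\varphi(i^+),\varphi(e_{-\alpha_1})],\dots],\varphi(e_{-\alpha_k})].
\]
Writing $\varphi(i^+)=c\, i^+$ (one-dimensionality of the highest weight space) and $\varphi(e_{-\alpha_j})=t_{-\alpha_j}e_{-\alpha_j}$, one sees $\varphi_{\mathcal{I},\mathcal{I}}(i^{(k)}_\beta)=c\,\big(\prod_j t_{-\alpha_j}\big)\, i^{(k)}_\beta$, i.e. on $\mathcal{I}_\beta$ the map is a single scalar $c_\beta$ determined multiplicatively by the $t_\alpha$'s and $c$. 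Then $\varphi(i_0)=i_0$ with $i_0=\sum_\beta\sum_k i^{(k)}_\beta$ and the linear independence of the $i^{(k)}_\beta$ forces $c_\beta=1$ for every $\beta\in\Gamma$; in particular $c=1$, and for each positive root $\alpha$ that appears, $t_{-\alpha}=1$, hence $t_\alpha=1$ too. Since $\mathcal{I}$ is irreducible, every negative simple root, and then (by the bracket chains generating all of $\mathcal I$) every root, arises in some such expression, so $t_\alpha=1$ for all $\alpha\in\Phi$; combined with $\varphi|_\mathcal{H}=\mathrm{id}$ this gives $\varphi_{\mathcal{G},\mathcal{G}}=\mathrm{id}_\mathcal{G}$, and then $c_\beta=1$ gives $\varphi_{\mathcal{I},\mathcal{I}}=\mathrm{id}_\mathcal{I}$. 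Therefore $\varphi=\mathrm{id}_\mathcal{L}$.

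The main obstacle is the bookkeeping in the previous paragraph: I must be careful that the scalar $c_\beta$ really is well-defined independently of which generating expression \eqref{for03} produces a given basis vector $i^{(k)}_\beta$ — this is automatic because $\varphi_{\mathcal{I},\mathcal{I}}$ restricted to the (possibly higher-dimensional) weight space $\mathcal{I}_\beta$ need not a priori be scalar, only $\mathcal H$-equivariant. The clean way around this is to argue first that $\prod_j t_{-\alpha_j}$ depends only on the weight $\beta=\beta^+-\sum\alpha_j$ (since $t_\alpha t_{-\alpha}=1$ makes $\alpha\mapsto t_\alpha$ a homomorphism from the root lattice into $\mathbb{C}^\times$, so the product telescopes to the value of that character on $\beta^+-\beta$), which shows $\varphi_{\mathcal{I},\mathcal{I}}$ is indeed a single scalar $c_\beta$ on all of $\mathcal{I}_\beta$; after that the comparison of coefficients in $\varphi(i_0)=i_0$ is routine. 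Alternatively, and perhaps more in the spirit of the paper, one can invoke Lemma~\ref{iden}: once $t_\alpha=1$ for all $\alpha$ (forcing $\varphi_{\mathcal{G},\mathcal{G}}=\mathrm{id}_\mathcal{G}$) it gives $\varphi_{\mathcal{I},\mathcal{I}}=\lambda\,\mathrm{id}_\mathcal{I}$, and then $\varphi(i_0)=i_0$ immediately yields $\lambda=1$.
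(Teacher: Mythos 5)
Your argument tracks the paper's proof of this lemma almost step for step: the splitting $\varphi=\varphi_{\mathcal{G},\mathcal{G}}+\varphi_{\mathcal{I},\mathcal{I}}$, Lemma~\ref{lm01}, the scalar action on the highest weight vector, the bracket chains \eqref{for03} giving $\varphi\bigl(i^{(k)}_\beta\bigr)=c\bigl(\prod_j t_{-\alpha_j}\bigr)i^{(k)}_\beta$, and the comparison with $\varphi_{\mathcal{I},\mathcal{I}}(i_0)=i_0$ forcing every basis vector of $\mathcal{I}$ to be fixed, i.e. $\varphi_{\mathcal{I},\mathcal{I}}=\mathrm{id}_{\mathcal{I}}$. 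The gap is in the last step, where you deduce $\varphi_{\mathcal{G},\mathcal{G}}=\mathrm{id}_{\mathcal{G}}$. The relations you actually extract from $\varphi(i_0)=i_0$ say only that the character $\chi$ of the root lattice determined by $\alpha\mapsto t_\alpha$ satisfies $\chi(\beta^+-\beta)=1$ for every weight $\beta\in\Gamma$; a root ``arising in some bracket chain'' yields a relation on a \emph{product} of $t$'s, not $t_{-\alpha}=1$ for that individual root (already at depth $\geq 2$ you get $t_{-\alpha_1}t_{-\alpha_2}=1$ and nothing more). To finish along your route you would additionally need that the differences $\beta^+-\beta$ generate the whole root lattice — true, but it requires an argument (e.g. via unbroken weight strings), and your proposed fallback to Lemma~\ref{iden} does not close this, since you invoke it in the direction that \emph{presupposes} $t_\alpha=1$ for all $\alpha$.

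The paper sidesteps the issue: once $\varphi(i)=i$ for all $i\in\mathcal{I}$ is established, it applies Lemma~\ref{iden}\,b), i.e. the implication $\varphi_{\mathcal{I},\mathcal{I}}=\mathrm{id}_{\mathcal{I}}\Rightarrow\varphi_{\mathcal{G},\mathcal{G}}=\mathrm{id}_{\mathcal{G}}$, which rests on $[\mathcal{I},\varphi_{\mathcal{G},\mathcal{G}}(x)-x]=0$ and the irreducibility of $\mathcal{I}$ over the simple algebra $\mathcal{G}$. Since you already have $\varphi_{\mathcal{I},\mathcal{I}}=\mathrm{id}_{\mathcal{I}}$ at that point, replacing your final deduction by this citation repairs the proof with no extra work. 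Your side worry about whether $c_\beta$ is well defined on a higher-dimensional weight space is reasonable but unnecessary: it suffices that $\varphi$ acts diagonally on the chosen basis $\bigl\{i^{(k)}_\beta\bigr\}$ of the form \eqref{for03}, which is exactly what both you and the paper compute.
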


\begin{proof}  Since
\[
h_0+i_0=\varphi(h_0+i_0)=\varphi_{\mathcal{G},\mathcal{G}}(h_0)+\varphi_{\mathcal{I},\mathcal{I}}(i_0),
\]
it follows that $\varphi_{\mathcal{G},\mathcal{G}}(h_0)=h_0$ and
$\varphi_{\mathcal{I},\mathcal{I}}(i_0)=i_0.$ By Lemma~\ref{lm01}
\(\varphi_{\mathcal{G},\mathcal{G}}\) acts diagonally
on~\(\mathcal{G},\) i.e.,
\(\varphi_{\mathcal{G},\mathcal{G}}(e_\alpha)=t_\alpha e_\alpha,\)
\(t_\alpha\in \mathbb{C},\) \(\alpha\in \Phi.\)

Let \(i_+\) be the highest weight vector of \(\mathcal{I}\) and let
\(\alpha\) be a positive root. Then
\begin{eqnarray*}
&[\varphi(i_+), e_\alpha] & =  t_\alpha^{-1}[\varphi(i_+),
\varphi(e_\alpha)]=t_\alpha^{-1}\varphi([i_+,
e_\alpha])=\varphi(0)=0 \\
& [\varphi(i_+), h] & =  [\varphi(i_+),
\varphi(h)]=\alpha_+(h)\varphi(i_+),
\end{eqnarray*}
where \(\alpha_+\in \Gamma\) is a highest weight. This means that
\(\varphi(i_+)\) is also a highest weight vector. Since the
highest weight subspace is one-dimensional, it follows that
$\varphi(i_{+})=\lambda_+ i_{+},$ where \(\lambda_+\in
\mathbb{C}.\) Now taking into account that
$\varphi(i_{+})=\lambda_+ i_{+}$ from previous lemma we conclude
that
\begin{eqnarray*}
\varphi\left(i^{(k)}_{\beta}\right)&=&\lambda_+[\dots[[i_{+},
\varphi(e_{-\alpha_1})], \varphi(e_{-\alpha_2})],\dots,
\varphi(e_{-\alpha_k})]=\\
&=& \left(\prod\limits_{p=1}^k
t_{-\alpha_p}\lambda_+\right)[\dots[[i_{+}, e_{-\alpha_1}],
e_{-\alpha_2}],\dots,
e_{-\alpha_k}]=c_\beta^{(k)}\lambda_+i_\beta^{(k)},
\end{eqnarray*}
i.e., \begin{eqnarray*}
\varphi\left(i^{(k)}_{\beta}\right)=c_\beta^{(k)}\lambda_+i_\beta^{(k)},
\end{eqnarray*}
for \( 1\leq k\leq s_\beta, \beta\in \Gamma.\)  Taking into
account these equalities, from
\(i_0=\varphi_{\mathcal{I},\mathcal{I}}(i_0),\) we obtain
$\varphi(i^{(k)}_{\beta})=i^{(k)}_{\beta}$ for all for \( 1\leq
k\leq s_\beta, \beta\in \Gamma.\) This imply $\varphi(i)=i$ for
all $i\in \mathcal{I}.$ By Lemma~\ref{iden}, it follows that
\(\varphi=id_\mathcal{L}.\) The proof is complete.
\end{proof}

Let \(\dim \mathcal{H}=k\geq 2\) and let \(\alpha_1, \ldots,
\alpha_k\) be simple roots. Set \(i_s=\theta(h_{\alpha_s}),\)
\(s=1,\ldots, k,\) \(i_\alpha=\theta(e_\alpha),\) \(\alpha\in
\Phi.\) Take
\[
i_0=\sum\limits_{s=1}^k i_s+\sum\limits_{\alpha\in\Phi} i_\alpha.
\]

\begin{lm}\label{lm57}
Let $\mathcal{L}=\mathcal{G}\dot{+}\mathcal{I}$ be a complex
simple Leibniz algebra with \(\dim\mathcal{G}=\dim \mathcal{I}\)
and \(\dim \mathcal{H}=k\geq 2.\) Suppose that \(\varphi\) is  an
automorphism on $\mathcal{L}$ such that
\(\varphi(h_0+i_0)=h_0+i_0.\) Then
\(\varphi=\textrm{id}_\mathcal{L}.\)
\end{lm}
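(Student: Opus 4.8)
The overall strategy mirrors the proof of Lemma~\ref{lm02}, but now we are in the case $\dim\mathcal{G}=\dim\mathcal{I}$, so by Lemma~\ref{form} the automorphism has the form $\varphi=\varphi_{\mathcal{G},\mathcal{G}}+\omega\,\theta\circ\varphi_{\mathcal{G},\mathcal{G}}+\lambda\,\theta\circ\varphi_{\mathcal{G},\mathcal{G}}\circ\theta^{-1}$ with $\lambda\neq0$. First I would apply $\varphi$ to $h_0+i_0$ and split the result into its $\mathcal{G}$-component and its $\mathcal{I}$-component. The $\mathcal{G}$-component is $\varphi_{\mathcal{G},\mathcal{G}}(h_0)$, so we immediately get $\varphi_{\mathcal{G},\mathcal{G}}(h_0)=h_0$; the $\mathcal{I}$-component must then equal $i_0$. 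Since $h_0$ is strongly regular and $\varphi_{\mathcal{G},\mathcal{G}}$ fixes it, Lemma~\ref{lm01} (applied to the Lie-algebra automorphism $\varphi_{\mathcal{G},\mathcal{G}}$ of $\mathcal{G}$) gives $\varphi_{\mathcal{G},\mathcal{G}}(h)=h$ for all $h\in\mathcal{H}$ and $\varphi_{\mathcal{G},\mathcal{G}}(e_\alpha)=t_\alpha e_\alpha$, $\varphi_{\mathcal{G},\mathcal{G}}(e_{-\alpha})=t_\alpha^{-1}e_{-\alpha}$ for all $\alpha\in\Phi$, with $t_\alpha\in\mathbb{C}^\times$.

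Next I would compute the $\mathcal{I}$-component of $\varphi(i_0)$ using the explicit form of $\varphi$ and the fact that $\theta$ is a module isomorphism with $\mathcal{I}_s=\theta(h_{\alpha_s})$, $\mathcal{I}_\alpha=\theta(\mathcal{G}_\alpha)$. For an element $i=\theta(x)\in\mathcal{I}$, one has $\varphi|_{\mathcal{I}}(i)=\lambda\,\theta(\varphi_{\mathcal{G},\mathcal{G}}(\theta^{-1}(i)))=\lambda\,\theta(\varphi_{\mathcal{G},\mathcal{G}}(x))$. Hence $\varphi(i_s)=\lambda\,\theta(\varphi_{\mathcal{G},\mathcal{G}}(h_{\alpha_s}))=\lambda\,\theta(h_{\alpha_s})=\lambda i_s$ and $\varphi(i_\alpha)=\lambda\,\theta(\varphi_{\mathcal{G},\mathcal{G}}(e_\alpha))=\lambda t_\alpha\,\theta(e_\alpha)=\lambda t_\alpha i_\alpha$. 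Now $\varphi(i_0)=i_0$ forces, by comparing the $\mathcal{I}_s$-components (which are linearly independent, being in distinct weight spaces or the zero-weight space), $\lambda=1$, and then comparing the $\mathcal{I}_\alpha$-components gives $\lambda t_\alpha=1$, so $t_\alpha=1$ for every $\alpha\in\Phi$. Consequently $\varphi_{\mathcal{G},\mathcal{G}}$ fixes $\mathcal{H}$ and every root vector $e_{\pm\alpha}$, so $\varphi_{\mathcal{G},\mathcal{G}}=\textrm{id}_\mathcal{G}$, and with $\lambda=1$ the $\mathcal{I}$-part is $\theta\circ\textrm{id}_\mathcal{G}\circ\theta^{-1}=\textrm{id}_\mathcal{I}$; it remains only to note that $\omega$ is irrelevant since $\varphi_{\mathcal{G},\mathcal{G}}(h_0)=h_0$ already pins down $\theta(h_0)$, but in fact once $\varphi_{\mathcal{G},\mathcal{G}}=\textrm{id}$ and $\varphi_{\mathcal{I},\mathcal{I}}=\textrm{id}$ the decomposition in Lemma~\ref{form} shows $\varphi=\textrm{id}_\mathcal{L}+\omega\theta$; applying $\varphi(h_0+i_0)=h_0+i_0$ once more, or simply re-examining the $\mathcal{I}$-component of $\varphi(h_0)=h_0$, yields $\omega\theta(h_0)=0$, and since $h_0\neq0$ and $\theta$ is injective on $\mathcal{G}$ with $h_0\in\mathcal{H}=\theta^{-1}(\mathcal{I}_0)$ nonzero, we get $\omega=0$. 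Hence $\varphi=\textrm{id}_\mathcal{L}$.

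The main obstacle I anticipate is bookkeeping around the element $i_0$: one must be careful that the chosen generators $i_s=\theta(h_{\alpha_s})$ and $i_\alpha=\theta(e_\alpha)$ are genuinely linearly independent so that matching the coefficients in $\varphi(i_0)=i_0$ term by term is legitimate — this is exactly why the weight decomposition $\mathcal{I}=\mathcal{I}_0\oplus\bigoplus_{\alpha\in\Phi}\mathcal{I}_\alpha$ with $\mathcal{I}_0=\theta(\mathcal{H})$ is invoked, and why one needs $\dim\mathcal{H}=k\geq2$ only insofar as Lemma~\ref{form} and the presence of enough simple roots $\alpha_1,\dots,\alpha_k$ are used (the $k\geq2$ hypothesis is inherited from that setup). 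A secondary point requiring a line of justification is that $\theta(h_0)\neq0$: this holds because $\theta$ is a linear isomorphism $\mathcal{G}\to\mathcal{I}$ and $h_0\neq0$. Everything else is a direct computation using that $\theta$ intertwines the $\mathcal{G}$-action, together with Lemma~\ref{lm01} and Lemma~\ref{iden}.
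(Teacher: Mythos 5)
Your argument breaks down at the point where you pass from $\varphi(h_0+i_0)=h_0+i_0$ to ``$\varphi(i_0)=i_0$''. By Lemma~\ref{form} the $\mathcal{I}$-component of $\varphi(h_0+i_0)$ is $\varphi_{\mathcal{G},\mathcal{I}}(h_0)+\varphi_{\mathcal{I},\mathcal{I}}(i_0)=\omega\theta(h_0)+\varphi_{\mathcal{I},\mathcal{I}}(i_0)$, and the term $\omega\theta(h_0)$ lies in $\mathcal{I}_0=\theta(\mathcal{H})$, i.e.\ exactly in the span of the $i_s$. Hence comparing $\mathcal{I}_0$-components only yields $\omega\theta(h_0)+(\lambda-1)\sum_{s=1}^k i_s=0$, a single relation coupling $\omega$ and $\lambda$; it does not force $\lambda=1$. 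The only clean information at this stage is $\lambda t_\alpha=1$ for all $\alpha\in\Phi$ (from the $\mathcal{I}_\alpha$-components), and since $t_{-\alpha}=t_\alpha^{-1}$ this still leaves $\lambda=\pm1$ open. That your shortcut cannot work is shown by Example~\ref{exam}: for $\mathcal{L}=\mathfrak{sl}_2\dot{+}\mathcal{I}$ the automorphism with $\omega=1$, $\lambda=t=-1$ satisfies $\lambda t=1$ and $2\omega+\lambda=1$, fixes $h_0+i_0$, and is not the identity. Your proof never genuinely invokes $\dim\mathcal{H}=k\geq2$ (Lemma~\ref{form} holds for $k=1$ as well), so if it were correct it would also apply to that example.

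The missing ingredient is precisely the step in the paper that uses $k\geq2$. From $\lambda t_\alpha=1$ one does get $\varphi(i_\alpha)=i_\alpha$ for all $\alpha\in\Phi$. Because $k\geq2$, every row of the Cartan matrix contains a negative entry, so for each simple root $\alpha_i$ there is a simple root $\alpha_j$ with $\alpha_i+\alpha_j\in\Phi$ and $[e_{\alpha_j},e_{\alpha_i}]=n_{\alpha_j,\alpha_i}e_{\alpha_i+\alpha_j}$ with $n_{\alpha_j,\alpha_i}\neq0$. Applying $\varphi$ to $[i_{\alpha_j},e_{\alpha_i}]=n_{\alpha_j,\alpha_i}i_{\alpha_i+\alpha_j}$ and using $\varphi(i_{\alpha_j})=i_{\alpha_j}$ and $\varphi(i_{\alpha_i+\alpha_j})=i_{\alpha_i+\alpha_j}$ gives $t_{\alpha_i}=1$ for every simple root; since the simple root vectors generate $\mathcal{G}$, this forces $\varphi_{\mathcal{G},\mathcal{G}}=\mathrm{id}_{\mathcal{G}}$, whence Lemma~\ref{iden} gives $\varphi_{\mathcal{I},\mathcal{I}}=\lambda\,\mathrm{id}_{\mathcal{I}}$ and then $\lambda=1$, and finally $\omega\theta(h_0)=0$ yields $\omega=0$. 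You need to insert this root-addition argument between your computation $\varphi(i_\alpha)=\lambda t_\alpha i_\alpha$ and your conclusion $t_\alpha=1$; the remainder of your outline (the use of Lemmata~\ref{lm01} and~\ref{iden}, and the final elimination of $\omega$) is consistent with the paper.
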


\begin{proof} Let
\(
\varphi=\varphi_{\mathcal{G},\mathcal{G}}+\varphi_{\mathcal{G},\mathcal{I}}+\varphi_{\mathcal{I},\mathcal{I}}.
\) Since
\begin{eqnarray*}
\varphi_{\mathcal{G},\mathcal{G}}(h_0)=h_0,\,
\varphi_{\mathcal{G},\mathcal{I}}(h_0)+\varphi_{\mathcal{I},\mathcal{I}}(i_0)=i_0,
\end{eqnarray*}
by Lemma~\ref{lm01} we have that
\(\varphi_{\mathcal{G},\mathcal{G}}(e_\alpha)=t_\alpha e_\alpha,\)
\(\alpha\in \Phi\) and \(\varphi_{\mathcal{G},\mathcal{G}}(h)=h\)
for all \(h\in \mathcal{H}.\) Then
\begin{eqnarray*}
\varphi_{\mathcal{I},\mathcal{I}}(i_s)=\lambda
\theta(\varphi_{\mathcal{G},\mathcal{G}}(\theta^{-1}(i_s)))=\lambda
\theta(\varphi_{\mathcal{G},\mathcal{G}}(h_{\alpha_s}))=\lambda
\theta(h_{\alpha_s})=\lambda i_s
\end{eqnarray*}
and
\begin{eqnarray*}
\varphi_{\mathcal{I},\mathcal{I}}(i_\alpha)=\lambda
\theta(\varphi_{\mathcal{G},\mathcal{G}}(\theta^{-1}(i_\alpha)))=\lambda
\theta(\varphi_{\mathcal{G},\mathcal{G}}(e_{\alpha}))=\lambda
\theta(e_{\alpha})=\lambda t_\alpha i_\alpha.
\end{eqnarray*}
Further
\begin{eqnarray*}
i_0&=&\varphi_{\mathcal{G},\mathcal{I}}(h_0)+\varphi_{\mathcal{I},\mathcal{I}}(i_0)=
\omega\theta(h_0)+\varphi_{\mathcal{I},\mathcal{I}}(i_0)=\\
&=&\omega \sum\limits_{s=1}^k a_s i_s+\sum\limits_{s=1}^k \lambda
i_s+\sum\limits_{\alpha\in\Phi} \lambda t_\alpha i_\alpha.
\end{eqnarray*}
i.e.,
\begin{eqnarray*}
\sum\limits_{\alpha\in\Phi} (1-\lambda t_\alpha) i_\alpha=
\sum\limits_{s=1}^k (\omega a_s +\lambda -1)i_s.
\end{eqnarray*}
Since the right side of this equality belongs to \(\mathcal{I}_0\)
and the left  side does not belong to \(\mathcal{I}_0,\) it
follows that \(\lambda t_\alpha=1\) for all \(\alpha\in\Phi.\)
Hence \(\varphi_{\mathcal{I},\mathcal{I}}(i_\alpha)=i_\alpha\) for
all \(\alpha\in\Phi.\)

Since \(\dim \mathcal{H}\geq 2,\)  any row of the Cartan matrix of a
simple Lie algebra \(\mathcal{G}\) contains negative number (see
\cite[page 59]{Hum}). This means that for any simple root
\(\alpha_i\in \Phi\) there exists a simple root \(\alpha_j\in
\Phi\) such that
\[
a_{i,j}=(\alpha_i, \alpha_j)<0,
\]
where \((\cdot, \cdot)\) is a bilinear form on
\(\mathcal{H}^\ast\) induced by the Killing form on
\(\mathcal{G}.\) Then by \cite[Page~45,~Lemma 9.4]{Hum}, we obtain
that \(\alpha_i+\alpha_j\) is also a root and \([e_{\alpha_j},
e_{\alpha_i}]=n_{\alpha_j,\alpha_i}e_{\alpha_i+\alpha_j},\) where
\(n_{\alpha_j,\alpha_i}\) is a non zero integer.

Further
\begin{eqnarray*}
[i_{\alpha_j}, e_{\alpha_i}] & = & [\theta(e_{\alpha_j}),
e_{\alpha_i}]=\theta\left([e_{\alpha_j},
e_{\alpha_i}]\right)=\theta(n_{\alpha_j,\alpha_i}e_{\alpha_i+\alpha_j})=\\
& = & n_{\alpha_j,\alpha_i}i_{\alpha_i+\alpha_j}.
\end{eqnarray*}
Applying to this equality \(\varphi,\) we obtain that
\begin{eqnarray*}
n_{\alpha_j,\alpha_i}i_{\alpha_i+\alpha_j}  & = &
n_{\alpha_j,\alpha_i}\varphi(i_{\alpha_i+\alpha_j})=\varphi([i_{\alpha_j},
e_{\alpha_i}])=[\varphi(i_{\alpha_j}),
\varphi(e_{\alpha_i})]=\\
&=& [i_{\alpha_j},t_{\alpha_i}
e_{\alpha_i}]=n_{\alpha_j,\alpha_i}t_{\alpha_i}i_{\alpha_i+\alpha_j}.
\end{eqnarray*}
Thus \(t_{\alpha_i}=1\) for all \(i=1,\ldots, k,\) i.e.
\(\varphi_{\mathcal{G},\mathcal{G}}\) acts identically on the
subset of all simple roots \(\{h_{\alpha_i}, e_{\alpha_i},
e_{-\alpha_i}: 1\leq i \leq  k\}.\) Since this subset generates
the algebra \(\mathcal{G},\) it follows that
\(\varphi_{\mathcal{G},\mathcal{G}}\) acts identically on
\(\mathcal{G},\) i.e.,
\(\varphi_{\mathcal{G},\mathcal{G}}=\textrm{id}_\mathcal{G}.\) By
Lemma~\ref{iden} there exists a number \(\lambda\) such that
\(\varphi_{\mathcal{I},\mathcal{I}}=\lambda\textrm{id}_\mathcal{I}.\)
Since \(\varphi_{\mathcal{I},\mathcal{I}}(i_\alpha)=i_\alpha,\) it
follows that \(\lambda=1,\) i.e.,
\(\varphi_{\mathcal{I},\mathcal{I}}=\textrm{id}_\mathcal{I}.\)

Finally
\begin{eqnarray*}
i_0 & = &
\varphi_{\mathcal{G},\mathcal{I}}(h_0)+\varphi_{\mathcal{I},\mathcal{I}}(i_0)=\varphi_{\mathcal{G},\mathcal{I}}(h_0)+i_0,
\end{eqnarray*}
implies that \(\varphi_{\mathcal{G},\mathcal{I}}(h_0)=0,\) and
therefore  \(\varphi_{\mathcal{G},\mathcal{I}}\equiv 0.\) So,
\[
\varphi=\varphi_{\mathcal{G},\mathcal{G}}+\varphi_{\mathcal{I},\mathcal{I}}=\textrm{id}_\mathcal{L}.
\]
The proof is complete.
\end{proof}

\begin{exam}\label{exam} Lemma \ref{lm57} is not true for algebras with \(\dim
\mathcal{H}=1.\)

There is a unique complex simple Leibniz algebra with
one-dimensional Cartan subalgebra and \(\dim
\mathcal{G}=\dim\mathcal{I}.\)  This is the 6-dimensional simple
Leibniz algebra
\[
\mathcal{L}=\mathfrak{sl}_2\dot{+}\textrm{span}\{x_0, x_1,
x_2\}=\textrm{span}\{h, e, f, x_0, x_1, x_2\},
\]
and non zero products of the basis vectors in $\mathcal{L}$ are
represented as follows \cite{ROT}:
$$\begin{array}{lll}
\, [x_k,e]=-k(3-k)x_{k-1}, & k\in \{1,2\},&\\
\, [x_k,f]=x_{k+1},  & k\in\{0, 1\}, & \\
\, [x_k,h]=(2-2k)x_k, & k\in\{0, 1,2\},&\\
\, [e,h]=2e, & [h,f]=2f, &[e,f]=h, \\
\, [h,e]=-2e,& [f,h]=-2f, & [f,e]=-h.\\
 \end{array}$$

Note that the \(\mathfrak{sl}_2\)-module isomorphism
\(\theta:\mathfrak{sl}_2\to \mathcal{I}\) is defined by
\begin{eqnarray*}
\theta(h)=2x_1,\, \theta(e)=2x_0,\, \theta(f)=x_2.
\end{eqnarray*}

Let  $\varphi\in Aut(\mathcal{L})$ be an automorphism such that
$\varphi(h_0+i_0)=h_0+i_0,$ where  \(h_0=h,\) \(i_0=x_1+x_0+x_2.\)
Then either \(\varphi=\textrm{id}_\mathcal{L},\) or either
\begin{eqnarray}\label{sl2}
\varphi=\varphi_{\mathcal{G},\mathcal{G}}+\theta\circ\varphi_{\mathcal{G},\mathcal{G}}-
\theta\circ\varphi_{\mathcal{G},\mathcal{G}}\circ \theta^{-1},
\end{eqnarray}
where
\begin{eqnarray*}
\varphi_{\mathcal{G},\mathcal{G}}(h)=h,\,
\varphi_{\mathcal{G},\mathcal{G}}(e)=-e,\,
\varphi_{\mathcal{G},\mathcal{G}}(f)=-f.
\end{eqnarray*}

Let \(\varphi=\varphi_{\mathcal{G},\mathcal{G}}+
\omega\theta\circ\varphi_{\mathcal{G},\mathcal{G}}+\lambda\theta\circ\varphi_{\mathcal{G},\mathcal{G}}\circ
\theta^{-1}.\) Then   $\varphi(h+i_0)=h+i_0$ implies that
$\varphi_{\mathcal{G},\mathcal{G}}(h)=h\) and
\(\omega\theta(h)+\lambda\theta(\varphi_{\mathcal{G},\mathcal{G}}(\theta^{-1}(i_0)))=i_0.$
Using Lemma~\ref{lm01} we obtain that
\begin{eqnarray*}
&\,&
\varphi_{\mathcal{G},\mathcal{G}}(h)=h,\,\varphi_{\mathcal{G},\mathcal{G}}(e)=t
e,\,\varphi_{\mathcal{G},\mathcal{G}}(f)=t^{-1} f.
\end{eqnarray*}
Since
\begin{eqnarray*}
x_1+x_0+x_2=i_0=\omega\theta(h)+\lambda\theta(\varphi_{\mathcal{G},\mathcal{G}}(\theta^{-1}(i_0)))=
(2\omega+\lambda)x_1+\lambda t x_0+\lambda t^{-1}x_2,
\end{eqnarray*}
it follows that \(2\omega+\lambda=1,\, \lambda t =\lambda
t^{-1}=1.\)

Case 1. \(\lambda=t=1.\) In this case \(\omega=0.\) Thus
\(\varphi_{\mathcal{G},\mathcal{G}}=\textrm{id}_\mathcal{G},\) and
therefore \(\theta\circ\varphi_{\mathcal{G},\mathcal{G}}\circ
\theta^{-1}=\textrm{id}_\mathcal{I}.\) Hence
\(\varphi=\textrm{id}_\mathcal{L}.\)

Case 2. \(\lambda=t=-1.\) In this case \(\omega=1,\) and we obtain
an automorphism of the  form~\eqref{sl2}.
\end{exam}

\begin{lm}\label{locsl2}
Let \(\nabla\) be a 2-local automorphism of the complex simple Leibniz
algebra $\mathcal{L}=\mathfrak{sl_2}\oplus\mathcal{I},\) where
\(\mathcal{I}=\textrm{span}\{x_0, x_1, x_2\},\) such that
\(\nabla(h)=h\) and \(\nabla(h+i_0)=h+i_0,\) where
\(i_0=x_0+x_1+x_2.\) Then \(\nabla=\textrm{id}_\mathcal{L}.\)
\end{lm}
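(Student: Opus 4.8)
\textit{Proof plan for Lemma~\ref{locsl2}.}
The whole argument rests on the dichotomy supplied by Example~\ref{exam}. For an arbitrary $z\in\mathcal L$, applying the $2$-local property to the pair $(z,\,h+i_0)$ produces an automorphism $\psi_z$ with $\psi_z(z)=\nabla(z)$ and $\psi_z(h+i_0)=h+i_0$; by Example~\ref{exam} either $\psi_z=\mathrm{id}_\mathcal L$, in which case $\nabla(z)=z$ as desired, or $\psi_z=\varphi^\ast$, the single non-identity automorphism of the form~\eqref{sl2} with $\varphi^\ast_{\mathcal G,\mathcal G}(h)=h$, $\varphi^\ast_{\mathcal G,\mathcal G}(e)=-e$, $\varphi^\ast_{\mathcal G,\mathcal G}(f)=-f$. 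Using $\theta(h)=2x_1$, $\theta(e)=2x_0$, $\theta(f)=x_2$ one computes the action of $\varphi^\ast$ on the basis once and for all: $h\mapsto h+2x_1$, $e\mapsto-e-2x_0$, $f\mapsto-f-x_2$, $x_0\mapsto x_0$, $x_1\mapsto-x_1$, $x_2\mapsto x_2$. Thus it suffices to show that $\nabla(z)=\varphi^\ast(z)$ is impossible whenever $\varphi^\ast(z)\neq z$, i.e.\ whenever $z\notin\mathrm{Fix}(\varphi^\ast)$.

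Two further tools drive the case analysis. First, applying $2$-locality to the pair $(z,h)$ gives an automorphism $\chi$ with $\chi(h)=h$; since $\theta(h)\neq0$ this forces the $\mathcal G\to\mathcal I$ component of $\chi$ to vanish, and by Lemma~\ref{lm01} $\chi_{\mathcal G,\mathcal G}$ is diagonal, so $\chi=\chi_{\mathcal G,\mathcal G}+\lambda_\chi\,\theta\circ\chi_{\mathcal G,\mathcal G}\circ\theta^{-1}$ with $\chi_{\mathcal G,\mathcal G}(e)=te$, $\chi_{\mathcal G,\mathcal G}(f)=t^{-1}f$; consequently $\nabla(z)$ preserves the $h$-coordinate of $z$ and its support in the root/weight decomposition. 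Second, if an automorphism $\chi$ fixes $x_1$, then $\lambda_\chi\chi_{\mathcal G,\mathcal G}(h)=h$, whence $\chi_{\mathcal G,\mathcal G}(h)=\pm h$ by invariance of the Killing form, so $\chi$ is either diagonal with $\lambda_\chi=1$ or a Weyl-type automorphism ($h\mapsto-h$, $e\leftrightarrow f$ up to scalars) with $\lambda_\chi=-1$ — and in both cases $\chi$ leaves the $x_1$-coordinate of any element of $\mathcal I$ unchanged rather than negating it.

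Next I would fix the basis step by step. From the pair $(e,h)$, $\nabla(e)\in\mathbb Ce\subseteq\mathcal G$, while from $(e,h+i_0)$, $\nabla(e)\in\{e,-e-2x_0\}$; the second option lies outside $\mathcal G$, so $\nabla(e)=e$, and symmetrically $\nabla(f)=f$. Since $\varphi^\ast$ fixes $x_0$ and $x_2$, the pairs $(x_0,h+i_0)$ and $(x_2,h+i_0)$ give $\nabla(x_0)=x_0$, $\nabla(x_2)=x_2$ directly. The delicate coordinate is $x_1$, where $\varphi^\ast(x_1)=-x_1$. Here I introduce the auxiliary vector $e+x_1$: the pair $(e+x_1,h+i_0)$ gives $\nabla(e+x_1)\in\{e+x_1,\,-e-x_1-2x_0\}$, and the pair $(e+x_1,e)$, whose realizing automorphism fixes $e$, forces the $\mathcal G$-part of $\nabla(e+x_1)$ to be $e$, ruling out the second value; hence $\nabla(e+x_1)=e+x_1$. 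Now the pair $(x_1,e+x_1)$ together with $\nabla(x_1)\in\{x_1,-x_1\}$ (from $(x_1,h+i_0)$) forces $\nabla(x_1)=x_1$: otherwise the realizing automorphism would send $e$ to $(e+x_1)-(-x_1)=e+2x_1$, i.e.\ it would have to equal $e$ on the $\mathcal G$-part and $2x_1$ on the $\mathcal I$-part, impossible since $\theta(e)\in\mathbb Cx_0$ while $x_1\notin\mathbb Cx_0$. The same two observations also pin down $\nabla$ on auxiliary combinations such as $h+x_1$, $h+x_0$, $e+f$.

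Finally, for a general $z=a_hh+a_ee+a_ff+b_0x_0+b_1x_1+b_2x_2$ with $\varphi^\ast(z)\neq z$ I would run a short case analysis ruling out $\nabla(z)=\varphi^\ast(z)$. If $a_e\neq0$ the pair $(z,f)$ gives a contradiction, since an automorphism fixing $f$ cannot reverse the sign of the $e$-coefficient of $z$ as $\varphi^\ast$ does; symmetrically for $a_f\neq0$ one uses $(z,e)$. This leaves $z\in\mathbb Ch\oplus\mathcal I$, where $\varphi^\ast(z)\neq z$ means precisely $a_h\neq b_1$. If $z\in\mathcal I$, the pair $(z,x_1)$ (using $\nabla(x_1)=x_1$ and the Killing-form rigidity) shows $\nabla(z)=\varphi^\ast(z)$ would force $b_1=-b_1$, a contradiction. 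If $a_h\neq0$ one plays $z$ against $h$, then $f$ or $e$, and in the residual family $z=a_hh+b_1x_1$ against $e+x_1$; in each instance the equation $\chi(z)=\varphi^\ast(z)$ collapses to $a_h=b_1$, contrary to assumption. Since in all cases $\psi_z\neq\varphi^\ast$, we get $\nabla(z)=z$ for every $z$, i.e.\ $\nabla=\mathrm{id}_\mathcal L$. The main obstacle, and the reason the auxiliary vector $e+x_1$ and the Killing-form observation are needed, is exactly the family $z\in\mathbb Ch\oplus\mathcal I$: there the diagonal realization coming from $(z,h)$ and the dichotomy coming from $(z,h+i_0)$ are each, by themselves, compatible with $\nabla(z)=\varphi^\ast(z)$, and only the finer pairings expose the contradiction.
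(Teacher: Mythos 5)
Your argument is correct, and it runs on the same engine as the paper's proof: bootstrap the $2$-local property against reference vectors whose images are already pinned down ($h$, then $h+i_0$, then previously fixed auxiliary elements), with Example~\ref{exam} supplying the decisive rigidity at $h+i_0$. The organization, though, is genuinely different. The paper never isolates the exceptional automorphism explicitly: it first shows $\nabla$ fixes $\mathfrak{sl}_2$ pointwise and that $\nabla(x+i)$ always has the correct $\mathcal{G}$-part, and then, splitting according to whether the $\mathfrak{sl}_2$-part of $z$ has a nonzero $e$- or $f$-component, a nonzero $h$-component, or vanishes, it forces the realizing automorphism of a well-chosen pair to be the identity by solving for the parameters $(t,\omega,\lambda)$ in the normal form of Lemma~\ref{form}; its auxiliary vectors are $e+i_0$, $h+i$ and $e+i$. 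You instead make the dichotomy global --- $\nabla(z)\in\{z,\varphi^{\ast}(z)\}$ for every $z$, with $\varphi^{\ast}$ the unique non-identity automorphism of the form~\eqref{sl2} written out on the basis --- and then exclude the branch $\varphi^{\ast}(z)$ coefficient by coefficient, using the auxiliary vector $e+x_1$ and the Killing-form observation that an automorphism fixing $f$ (resp.\ $e$, resp.\ $x_1$) preserves the $e$- (resp.\ $f$-, resp.\ $x_1$-) coefficient. What your version buys is transparency: there is exactly one automorphism to defeat, and each pairing visibly kills part of it. The only caveat is that the last case, $z\in\mathbb{C}h\oplus\mathcal{I}$ with $a_h\neq0$, is left schematic; I checked that the pair $(z,e)$ disposes of the subcase $(b_0,b_2)\neq(0,0)$ (the $e$-coefficient $-2a_hb$ of $\chi(z)$ forces $\chi_{\mathcal{G},\mathcal{G}}=\mathrm{id}$, whence $\lambda=1$ and $a_h=b_1$) and that the pair $(z,e+x_1)$ disposes of the residual family $a_hh+b_1x_1$ (the $e$-coefficient $a_h\mu$ forces $\mu=\omega=0$ and again $a_h=b_1$), so the sketch does close; in a written-up version those two computations should be displayed, since --- as you note yourself --- that is precisely where the diagonal information from $(z,h)$ and the dichotomy from $(z,h+i_0)$ are each, separately, insufficient.
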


\begin{proof}  Let \(x\in \mathfrak{sl}_2.\) Take an automorphism
\(\varphi^{x,h}=\varphi^{x,h}_{\mathfrak{sl}_2,\mathfrak{sl}_2}+\varphi^{x,h}_{\mathfrak{sl}_2,\mathcal{I}}
+\varphi^{x,h}_{\mathcal{I},\mathcal{I}}\in
\textrm{Aut}(\mathcal{L})\) such that
\(\nabla(x)=\varphi^{x,h}(x)\) and \(\nabla(h)=\varphi^{x,h}(h).\)
Since
\begin{eqnarray*}
h=\nabla(h)=\varphi^{x,h}(h)=\varphi^{x,h}_{\mathfrak{sl}_2,\mathfrak{sl}_2}(h)+\varphi^{x,h}_{\mathfrak{sl}_2,
\mathcal{I}}(h),
\end{eqnarray*}
it follows that \(\varphi^{x,h}_{\mathfrak{sl}_2,\mathcal{I}}\equiv
0.\) Then
\begin{eqnarray*}
\nabla(x)=\varphi^{x,h}(x)=\varphi^{x,h}_{\mathfrak{sl}_2,\mathfrak{sl}_2}(x)\in
\mathfrak{sl}_2, \ \forall \ x\in \mathfrak{sl}_2.
\end{eqnarray*}

Let now \(x\in \mathfrak{sl}_2\) be a non zero element. Take an
automorphism
\(\varphi^{x,h+i_0}=\varphi^{x,h+i_0}_{\mathfrak{sl}_2,\mathfrak{sl}_2}+
\varphi^{x,h+i_0}_{\mathfrak{sl}_2,\mathcal{I}}+\varphi^{x,h+i_0}_{\mathcal{I},\mathcal{I}}\)
such that \(\nabla(x)=\varphi^{x,h+i_0}(x)\) and
\(\nabla(h+i_0)=\varphi^{x,h+i_0}(h+i_0).\) Since \(\nabla(x)\in
\mathfrak{sl}_2\) and
\begin{eqnarray*}
\nabla(x)=\varphi^{x,h+i_0}(x)=\varphi^{x,h+i_0}_{\mathfrak{sl}_2,\mathfrak{sl}_2}(x)
+\varphi^{x,h+i_0}_{\mathfrak{sl}_2,\mathcal{I}}(x),
\end{eqnarray*}
we have that \(\varphi^{x,h+i_0}_{\mathfrak{sl}_2,\mathcal{I}}\equiv
0.\) Then
\begin{eqnarray*}
h+i_0=\nabla(h+i_0)=\varphi^{x,h+i_0}_{\mathfrak{sl}_2,\mathfrak{sl}_2}(h)+
\varphi^{x,h+i_0}_{\mathcal{I},\mathcal{I}}(i_0),
\end{eqnarray*}
implies that
\(\varphi^{x,h+i_0}_{\mathfrak{sl}_2,\mathfrak{sl}_2}(h)=h\) and
\(\varphi^{x,h+i_0}_{\mathcal{I},\mathcal{I}}(i_0)=i_0.\) By
Example~\ref{exam} we have that
\(\varphi^{x,h+i_0}\equiv\textrm{id}_{\mathcal{L}},\) and
therefore
\begin{eqnarray*}
\nabla(x)=\varphi^{x,h+i_0}(x)=x.
\end{eqnarray*}

Let \(x\in \mathfrak{sl}_2\) be a non zero element and let \(i\in
\mathcal{I}.\) Take an automorphism
\(\varphi^{x,x+i}=\varphi^{x,x+i}_{\mathfrak{sl}_2,\mathfrak{sl}_2}+
\varphi^{x,x+i}_{\mathfrak{sl}_2,\mathcal{I}}+\varphi^{x,x+i}_{\mathcal{I},\mathcal{I}}\)
such that \(\nabla(x)=\varphi^{x,x+i}(x)\) and
\(\nabla(x+i)=\varphi^{x,x+i}(x+i).\) Since
\begin{eqnarray*}
x=\nabla(x)=\varphi^{x,x+i}(x)=\varphi^{x,x+i}_{\mathfrak{sl}_2,\mathfrak{sl}_2}(x)+\varphi^{x,x+i}_{\mathfrak{sl}_2,\mathcal{I}}(x),
\end{eqnarray*}
it follows that \(\varphi^{x,x+i}_{\mathfrak{sl}_2,\mathcal{I}}\equiv
0.\) Then
\begin{eqnarray*}
\nabla(x+i)=\varphi^{x,x+i}_{\mathfrak{sl}_2,\mathfrak{sl}_2}(x)+\varphi^{x,x+i}_{\mathcal{I},\mathcal{I}}(i)=x+i',
\end{eqnarray*}
where \(i'\in \mathcal{I}.\)

Now we shall show that \(\nabla(x+i)=x+i\) for all \(x\in
\mathfrak{sl}_2,\) \(i\in \mathcal{I}.\)

Case 1. Let \(x=c_1 h+c_0 e +c_2 f,\) where \(|c_0|+|c_2|\neq 0,\)
and let \(i\in \mathcal{I}.\) Take an automorphism
\(\varphi^{x,h+i_0}=\varphi^{x,h+i_0}_{\mathfrak{sl}_2,\mathfrak{sl}_2}+
\varphi^{x,h+i_0}_{\mathfrak{sl}_2,\mathcal{I}}+\varphi^{x,h+i_0}_{\mathcal{I},\mathcal{I}}\)
such that \(\nabla(x)=\varphi^{x,h+i_0}(x)\) and
\(\nabla(h+i_0)=\varphi^{x,h+i_0}(h+i_0).\) Since
\(\varphi^{x,h+i_0}_{\mathfrak{sl}_2,\mathfrak{sl}_2}(h)=h,\) by
Lemma~\ref{lm01} there exists a non zero number \(t\) such that
\(\varphi^{x,h+i_0}_{\mathfrak{sl}_2,\mathfrak{sl}_2}(e)=t e\) and
\(\varphi^{x,h+i_0}_{\mathfrak{sl}_2,\mathfrak{sl}_2}(f)=t^{-1}
f.\) Then
\begin{eqnarray*}
x=\varphi^{x,h+i_0}_{\mathfrak{sl}_2,\mathfrak{sl}_2}(x)=c_1 h+t  c_0 e+
t^{-1} c_2 f.
\end{eqnarray*}
Since \(|c_0|+|c_2|\neq 0,\) it follows that \(t=1.\) Thus
\(\varphi^{x,h+i_0}_{\mathfrak{sl}_2,\mathfrak{sl}_2}=\textrm{id}_{\mathfrak{sl}_2}\)
and
\(\varphi^{x,h+i_0}_{\mathcal{I},\mathcal{I}}=\lambda\,\textrm{id}_\mathcal{I}\)
(see Lemma~\ref{iden}). Further
\begin{eqnarray*}
i_0=\varphi^{x,h+i_0}_{\mathfrak{sl}_2,\mathcal{I}}(h)+\varphi^{x,h+i_0}_{\mathcal{I},\mathcal{I}}(i_0)=\omega
\theta (h)+\lambda i_0=(2\omega+\lambda)x_1+\lambda(x_0+x_2).
\end{eqnarray*}
Thus \(\lambda=1\) and \(\omega=0,\) and therefore
\(\varphi^{x,h+i_0}=\textrm{id}_{\mathfrak{sl}_2}.\) So,
\begin{eqnarray*}
\nabla(x+i)=x+i.
\end{eqnarray*}

Case 2. Let \(x=c_1 h+c_0 e +c_2 f,\) where \(c_1\neq 0,\)  and
let \(i\in \mathcal{I}.\) Take an automorphism
\(\varphi^{x,e+i_0}=\varphi^{x,e+i_0}_{\mathfrak{sl}_2,\mathfrak{sl}_2}+
\varphi^{x,e+i_0}_{\mathfrak{sl}_2,\mathcal{I}}+\varphi^{x,e+i_0}_{\mathcal{I},\mathcal{I}}\)
such that \(\nabla(x)=\varphi^{x,e+i_0}(x)\) and
\(\nabla(e+i_0)=\varphi^{x,e+i_0}(e+i_0).\) From Case 1, it
follows that \(\nabla(e+i_0)=e+i_0.\) Then
\begin{center}
\(\varphi^{x,e+i_0}_{\mathfrak{sl}_2,\mathfrak{sl}_2}(h)=h\) and
\(\varphi^{x,e+i_0}_{\mathfrak{sl}_2,\mathfrak{sl}_2}(e)=e,\)
\end{center}
because \(c_1\neq0.\) Thus Lemma~\ref{lm01} implies that
\(\varphi^{x,e+i_0}_{\mathfrak{sl}_2,\mathfrak{sl}_2}\equiv\textrm{id}_{\mathfrak{sl}_2}.\)
By Lemma~~\ref{iden} we have
\(\varphi^{x,e+i_0}_{\mathcal{I},\mathcal{I}}=\lambda\textrm{id}_\mathcal{I}.\)
Further
\begin{eqnarray*}
i_0=\varphi^{x,e+i_0}_{\mathfrak{sl}_2,\mathcal{I}}(e)+\varphi^{x,e+i_0}_{\mathcal{I},\mathcal{I}}(i_0)=\omega
\theta (e)+\lambda i_0=(2\omega+\lambda)x_0+\lambda(x_1+x_2).
\end{eqnarray*}
Thus \(\lambda=1\) and \(\omega=0,\) and therefore
\(\varphi^{x,e+i_0}=\textrm{id}_{\mathfrak{sl}_2}.\) So,
\begin{eqnarray*}
\nabla(x+i)=x+i.
\end{eqnarray*}

Case 3. Let \(i\in \mathcal{I}.\)  Take an automorphism
\(\varphi^{i,h+i}\) such that \(\nabla(i)=\varphi^{i,h+i}(i)\) and
\(\nabla(h+i)=\varphi^{i,h+i}(h+i).\) From Case 2, it follows that
\[
h+i=\nabla(h+i)=\varphi^{i,h+i}_{\mathfrak{sl}_2,\mathfrak{sl}_2}(h)+
\varphi^{i,h+i}_{\mathfrak{sl}_2,\mathcal{I}}(h)+
\varphi^{i,h+i}_{\mathcal{I},\mathcal{I}}(i),
\]
and therefore
\[
i=\varphi^{i,h+i}_{\mathfrak{sl}_2,\mathcal{I}}(h)+
\varphi^{i,h+i}_{\mathcal{I},\mathcal{I}}(i).
\]
Then
\begin{eqnarray}\label{sss}
i-\nabla(i)=(\varphi^{i,h+i}_{\mathfrak{sl}_2,\mathcal{I}}(h)+
\varphi^{i,h+i}_{\mathcal{I},\mathcal{I}}(i))-\varphi^{i,h+i}_{\mathcal{I},\mathcal{I}}(i)=
\varphi^{i,h+i}_{\mathfrak{sl}_2,\mathcal{I}}(h)=\ast x_1.
\end{eqnarray}

Now take an automorphism \(\varphi^{i,e+i}\) such that
\begin{center}
\(\nabla(i)=\varphi^{i,e+i}(i)\) and
\(\nabla(e+i)=\varphi^{i,e+i}(e+i).\)
\end{center}
From Case 1, it follows that \(\nabla(e+i)=e+i.\) Then
\begin{eqnarray}\label{s12}
i-\nabla(i)=(\varphi^{i,e+i}_{\mathfrak{sl}_2,\mathcal{I}}(e)+
\varphi^{i,e+i}_{\mathcal{I},\mathcal{I}}(i))-\varphi^{i,e+i}_{\mathcal{I},\mathcal{I}}(i)=
\varphi^{i,e+i}_{\mathfrak{sl}_2,\mathcal{I}}(e)=\ast x_0.
\end{eqnarray}
Combining \eqref{sss} and \eqref{s12}, we obtain that
\(\nabla(i)=i.\) The proof is complete.
\end{proof}

\begin{thm}
Any 2-local automorphism of complex simple Leibniz algebra
$\mathcal{L}=\mathcal{G}\dot{+}\mathcal{I}$ is an automorphism.
\end{thm}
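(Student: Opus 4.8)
The plan is to reduce the theorem to the three fixed-point lemmas already proved (Lemma~\ref{lm02}, Lemma~\ref{lm57} and Lemma~\ref{locsl2}) by the usual device of composing the given $2$-local automorphism $\nabla$ with a genuine automorphism that ``normalizes'' it at a distinguished element. Throughout I will use the elementary fact that if $\psi\in\textrm{Aut}(\mathcal{L})$ and $\nabla$ is a $2$-local automorphism, then $\psi^{-1}\circ\nabla$ is again a $2$-local automorphism: for a pair $x,y$ take the automorphism $\varphi_{x,y}$ furnished for $\nabla$ and replace it by $\psi^{-1}\circ\varphi_{x,y}$.

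First I would treat the two generic cases simultaneously, namely $\dim\mathcal{G}\neq\dim\mathcal{I}$, and $\dim\mathcal{G}=\dim\mathcal{I}$ with $\dim\mathcal{H}\geq 2$. Applying the $2$-local property to the pair $(h_0+i_0,\,h_0+i_0)$ (with $h_0$ strongly regular and $i_0$ as in \eqref{inol}, resp.\ in the paragraph preceding Lemma~\ref{lm57}) yields $\psi\in\textrm{Aut}(\mathcal{L})$ with $\psi(h_0+i_0)=\nabla(h_0+i_0)$. Put $\nabla'=\psi^{-1}\circ\nabla$, a $2$-local automorphism fixing $h_0+i_0$. For an arbitrary $x\in\mathcal{L}$ choose an automorphism $\eta$ with $\eta(h_0+i_0)=\nabla'(h_0+i_0)=h_0+i_0$ and $\eta(x)=\nabla'(x)$; then Lemma~\ref{lm02} (when $\dim\mathcal{G}\neq\dim\mathcal{I}$) or Lemma~\ref{lm57} (when $\dim\mathcal{G}=\dim\mathcal{I}$, $\dim\mathcal{H}\geq 2$) gives $\eta=\textrm{id}_\mathcal{L}$, hence $\nabla'(x)=x$. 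As $x$ is arbitrary, $\nabla'=\textrm{id}_\mathcal{L}$, so $\nabla=\psi\in\textrm{Aut}(\mathcal{L})$.

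It remains to handle $\dim\mathcal{G}=\dim\mathcal{I}$ with $\dim\mathcal{H}=1$, which by Example~\ref{exam} is exactly the $6$-dimensional algebra $\mathfrak{sl}_2\dot{+}\mathcal{I}$. Here a single normalizing condition does not pin down the automorphism, because of the exceptional automorphism \eqref{sl2}; so I would normalize at a \emph{pair} of elements. Applying the $2$-local property to $(h,\,h+i_0)$ with $i_0=x_0+x_1+x_2$ gives $\psi\in\textrm{Aut}(\mathcal{L})$ with $\psi(h)=\nabla(h)$ and $\psi(h+i_0)=\nabla(h+i_0)$. Then $\nabla'=\psi^{-1}\circ\nabla$ is a $2$-local automorphism with $\nabla'(h)=h$ and $\nabla'(h+i_0)=h+i_0$, so Lemma~\ref{locsl2} applies verbatim and gives $\nabla'=\textrm{id}_\mathcal{L}$. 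Therefore $\nabla=\psi$ is an automorphism, which completes the proof.

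The only genuinely delicate point is the $\mathfrak{sl}_2$ case: one must recognize that fixing $h_0+i_0$ alone is insufficient and that the correct substitute is to exploit the full strength of $2$-locality by normalizing at the pair $(h,h+i_0)$, matching the hypotheses of Lemma~\ref{locsl2}. Everything else is bookkeeping, the main item of which is the (routine) stability of the class of $2$-local automorphisms under post-composition with an automorphism.
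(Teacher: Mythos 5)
Your proposal is correct and follows essentially the same route as the paper: in the generic cases one normalizes at $h_0+i_0$ and invokes Lemma~\ref{lm02} or Lemma~\ref{lm57}, and in the exceptional $\mathfrak{sl}_2$ case one normalizes at the pair $(h,h+i_0)$ and invokes Lemma~\ref{locsl2}, exactly as in the paper's Cases 1 and 2. Your explicit remark that the class of $2$-local automorphisms is stable under post-composition with an automorphism is used implicitly in the paper and is a welcome clarification, but it does not change the argument.
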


\begin{proof} Case 1. Let \(\dim \mathcal{G}\neq\dim\mathcal{I}\) or
\(\dim \mathcal{H}\geq 2.\) Let $\nabla$ be a 2-local automorphism
and $\nabla(h_0+i_0)=\varphi_{h_0+i_0}(h_0+i_0)$ for some
$\varphi_{h_0+i_0}\in Aut(\mathcal{L})$. Denote
$\widetilde{\nabla}=\varphi^{-1}_{h_0+i_0}\circ\nabla.$ Then for a
2-local automorphism $\widetilde{\nabla}$ we have
$\widetilde{\nabla}(h_0+i_0)=h_0+i_0$. For an element $x\in
\mathcal{L}$ there exists $\widetilde{\varphi}_{x, h_0+i_0}\in
Aut(\mathcal{L})$ such that
\begin{center}
\(\widetilde{\varphi}_{x,
h_0+i_0}(h_0+i_0)=\widetilde{\nabla}(h_0+i_0)=h_0+i_0\)
 and
$\widetilde{\nabla}(x)=\widetilde{\varphi}_{x, h_0+i_0}(x).$
\end{center}
Using the Lemmata \ref{lm02} and \ref{lm57} we conclude that
$\widetilde{\varphi}_{x, h_0+i_0}=\textrm{id}_\mathcal{L}$. Thus
$\widetilde{\nabla}(x)=\widetilde{\varphi}_{x, h_0+i_0}(x)=x$ for
each  $x\in \mathcal{L},$ and therefore
$\varphi^{-1}_{h_0+i_0}\circ\nabla=\textrm{id}_\mathcal{L}.$ Hence
$\nabla=\varphi_{h_0+i_0}$ is an automorphism.

Case 2. Let \(\mathcal{L}\) be an algebra from Example~\ref{exam}
and let \(\nabla\) be a 2-local automorphism on \(\mathcal{L}.\)
Take a 2-local automorphism  $\varphi_{h, h+i_0}$ such that
\(\nabla(h)=\varphi_{h, h+i_0}(h)\) and \linebreak
$\nabla(h+i_0)=\varphi_{h, h+i_0}(h+i_0).$ Then \(h\) and
\(h+i_0\) both are fixed points of 2-local automorphism
$\varphi^{-1}_{h, h+i_0}\circ\nabla,$ and therefore by
Lemma~~\ref{locsl2}, it is an identical automorphism. Thus
$\nabla=\varphi_{h, h+i_0}$ is an automorphism. The proof is
complete.
\end{proof}

\subsection{2-Local automorphisms on filiform  Leibniz
algebras}

\

The following theorems are similar to the corresponding theorems
for the Lie algebras case and their the proofs are obtained by
replacing the words "Lie algebra" \ by "Leibniz algebra" (see
\cite{AK2016A}).

\begin{thm}
Let $\mathcal{L}$ be an $n$-dimensional Leibniz algebra with
$n\geq2$. Suppose that
\begin{itemize}
\item[(i)] \(\dim [\mathcal{L},\mathcal{L}] \leq n-2;\)
\item[(ii)]\(\textrm{Ann}(\mathcal{L})\cap[\mathcal{L},\mathcal{L}]\neq \{0\}.\)
\end{itemize}
Then $\mathcal{L}$ admits a 2-local automorphism which is not an
automorphism.
\end{thm}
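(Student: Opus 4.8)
The plan is to transcribe the construction of Theorem~\ref{pure}, replacing the auxiliary derivations by automorphisms of the form $\textrm{id}_{\mathcal{L}}+\delta$. First I would fix a vector space decomposition $\mathcal{L}=[\mathcal{L},\mathcal{L}]\oplus V$; hypothesis~(i) forces $\dim V=k\geq 2$, so choose a basis $\{e_1,\dots,e_k\}$ of $V$, and by hypothesis~(ii) fix a non-zero element $z\in\textrm{Ann}(\mathcal{L})\cap[\mathcal{L},\mathcal{L}]$. For $a,b\in\mathbb{C}$ define a linear operator $\delta_{a,b}$ on $\mathcal{L}$ by
\[
\delta_{a,b}\Big(x_1+\sum_{i=1}^k\lambda_i e_i\Big)=(a\lambda_1+b\lambda_2)z,\qquad x_1\in[\mathcal{L},\mathcal{L}].
\]
Then $\delta_{a,b}$ vanishes on $[\mathcal{L},\mathcal{L}]$ and maps $\mathcal{L}$ into $\mathbb{C}z\subseteq\textrm{Ann}(\mathcal{L})$, so (as observed before Theorem~\ref{pure}) it is a derivation; moreover $\delta_{a,b}^2=0$ since $z\in[\mathcal{L},\mathcal{L}]\subseteq\ker\delta_{a,b}$.

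The crucial observation is that $\varphi_{a,b}:=\textrm{id}_{\mathcal{L}}+\delta_{a,b}$ is an automorphism for every $a,b$. Invertibility follows from $\delta_{a,b}^2=0$, which gives $(\textrm{id}_{\mathcal{L}}+\delta_{a,b})(\textrm{id}_{\mathcal{L}}-\delta_{a,b})=\textrm{id}_{\mathcal{L}}$. For multiplicativity, take $x,y\in\mathcal{L}$: since $[x,y]\in[\mathcal{L},\mathcal{L}]$ we have $\delta_{a,b}([x,y])=0$, while $\delta_{a,b}(x),\delta_{a,b}(y)\in\textrm{Ann}(\mathcal{L})$ kill every bracket in which they occur, so
\[
\varphi_{a,b}([x,y])=[x,y]=[x+\delta_{a,b}(x),\,y+\delta_{a,b}(y)]=[\varphi_{a,b}(x),\varphi_{a,b}(y)].
\]
Thus $\varphi_{a,b}\in\textrm{Aut}(\mathcal{L})$.

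Next I would define the candidate map. Let $f$ be the homogeneous, non-additive function on $\mathbb{C}^2$ from the proof of Theorem~\ref{pure}, and put
\[
\nabla\Big(x_1+\sum_{i=1}^k\lambda_i e_i\Big)=x_1+\sum_{i=1}^k\lambda_i e_i+f(\lambda_1,\lambda_2)z .
\]
Since $f$ is not additive, $\nabla$ is not linear, hence not an automorphism. To see it is a $2$-local automorphism, fix $x=x_1+\sum\lambda_i e_i$ and $y=y_1+\sum\mu_i e_i$; I need $a,b$ with $\varphi_{a,b}(x)=\nabla(x)$ and $\varphi_{a,b}(y)=\nabla(y)$, i.e.
\begin{displaymath}
\left\{\begin{array}{ll}
\lambda_1 a+\lambda_2 b & = f(\lambda_1,\lambda_2)\\
\mu_1 a+\mu_2 b & = f(\mu_1,\mu_2).
\end{array}\right.
\end{displaymath}
If the coefficient matrix is non-singular there is a unique solution; if it is singular, then $(\mu_1,\mu_2)=c(\lambda_1,\lambda_2)$ for some $c\in\mathbb{C}$, and homogeneity of $f$ turns the second equation into $c$ times the first, so the system remains consistent. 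Either way a solution $(a,b)$ exists and $\varphi_{a,b}$ witnesses the $2$-local property at $x,y$; hence $\nabla$ is a $2$-local automorphism that is not an automorphism.

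The only genuinely delicate point is the verification that $\textrm{id}_{\mathcal{L}}+\delta_{a,b}$ is an automorphism, and this is precisely where hypothesis~(ii) is used in full: one needs $z$ to lie in $[\mathcal{L},\mathcal{L}]$, not merely in $\textrm{Ann}(\mathcal{L})$, both to make $\delta_{a,b}^2=0$ (hence to invert $\textrm{id}_{\mathcal{L}}+\delta_{a,b}$) and to close up the bracket identity; the remaining verifications are a direct transcription of the argument for Theorem~\ref{pure}.
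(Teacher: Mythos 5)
Your proposal is correct and is essentially the argument the paper intends: the paper gives no explicit proof of this theorem, merely stating that it follows from the Lie-algebra case in the cited reference by word replacement, and your writeup supplies exactly that construction — the nonlinear map $\nabla=\mathrm{id}_{\mathcal{L}}+f(\lambda_1,\lambda_2)z$ witnessed at each pair by the automorphisms $\mathrm{id}_{\mathcal{L}}+\delta_{a,b}$, in direct parallel with Theorem~3.9 (the derivation version). The verification that $\mathrm{id}_{\mathcal{L}}+\delta_{a,b}$ is an automorphism, using $z\in\mathrm{Ann}(\mathcal{L})\cap[\mathcal{L},\mathcal{L}]$ both for $\delta_{a,b}^2=0$ and for multiplicativity, is exactly where hypothesis~(ii) enters, and your solvability argument for the $2\times 2$ system via homogeneity of $f$ matches the paper's.
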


\begin{thm}
Let $\mathcal{L}$ be a finite-dimensional non null-filiform
nilpotent   Leibniz algebra    with $\dim \mathcal{L}\geq 2.$ Then
$\mathcal{L}$ admits a 2-local automorphism which is not an
automorphism.
\end{thm}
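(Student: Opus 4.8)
The plan is to deduce this exactly as Corollary~\ref{nil} was deduced for $2$-local derivations: namely, from the preceding theorem (the one asserting that an $n$-dimensional Leibniz algebra with $\dim[\mathcal{L},\mathcal{L}]\leq n-2$ and $\textrm{Ann}(\mathcal{L})\cap[\mathcal{L},\mathcal{L}]\neq\{0\}$ admits a $2$-local automorphism which is not an automorphism). Thus the bulk of the work is to check that a finite-dimensional non null-filiform nilpotent Leibniz algebra $\mathcal{L}$ with $n=\dim\mathcal{L}\geq 2$ satisfies both hypotheses of that theorem, the one genuine exception being the abelian case, which I would treat by a separate explicit construction.

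First I would verify hypothesis (i). By the cited result of \cite{AO01}, up to isomorphism the null-filiform algebra $NF_n$ is the only $n$-dimensional nilpotent Leibniz algebra with $\dim\mathcal{L}^2=n-1$; hence for any non null-filiform nilpotent $\mathcal{L}$ one has $\dim[\mathcal{L},\mathcal{L}]=\dim\mathcal{L}^2\leq n-2$. Next, hypothesis (ii): let $t$ be the nilindex of $\mathcal{L}$. As recalled in the excerpt, $\{0\}\neq\mathcal{L}^{t-1}\subseteq\textrm{Ann}(\mathcal{L})$; moreover, if $\mathcal{L}$ is not abelian then $t\geq 3$, so $t-2\geq 1$ and therefore $\mathcal{L}^{t-1}=[\mathcal{L}^{t-2},\mathcal{L}]\subseteq[\mathcal{L},\mathcal{L}]$. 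Consequently $\{0\}\neq\mathcal{L}^{t-1}\subseteq\textrm{Ann}(\mathcal{L})\cap[\mathcal{L},\mathcal{L}]$, which is (ii). Applying the preceding theorem then yields the required $2$-local automorphism of $\mathcal{L}$ which is not an automorphism, for every non-abelian $\mathcal{L}$ in our class.

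It remains to dispose of the abelian algebras $\mathcal{L}\cong\mathbb{C}^n$, $n\geq 2$, for which $[\mathcal{L},\mathcal{L}]=\{0\}$, so hypothesis (ii) fails and the reduction above does not apply. Here $\textrm{Aut}(\mathcal{L})=GL_n(\mathbb{C})$, and I would construct $\Delta$ by fixing any non-constant function $c$ from the set of one-dimensional subspaces of $\mathbb{C}^n$ into $\mathbb{C}\setminus\{0\}$ (such $c$ exists since $n\geq 2$) and setting $\Delta(v)=c(\mathbb{C}v)v$ for $v\neq 0$ and $\Delta(0)=0$. Then $\Delta$ is not linear because $c$ is not constant, hence not an automorphism. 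It is a $2$-local automorphism: given $x,y\in\mathcal{L}$, if $x=y=0$ take $g=\textrm{id}$; if $x$ and $y$ lie on a common line $\ell$ (in particular if one of them is $0$) then $g=c(\ell)\,\textrm{id}$ satisfies $g(x)=\Delta(x)$ and $g(y)=\Delta(y)$; and if $x,y$ are linearly independent then so are $\Delta(x)=c(\mathbb{C}x)x$ and $\Delta(y)=c(\mathbb{C}y)y$, so completing each pair to a basis of $\mathbb{C}^n$ yields a $g\in GL_n(\mathbb{C})$ with $g(x)=\Delta(x)$, $g(y)=\Delta(y)$.

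The step needing most care here is conceptual rather than computational: observing that the abelian algebras escape the preceding theorem and must be handled by their own (short) construction; the rest is a routine verification of the two hypotheses, entirely parallel to the $2$-local derivation case. Combining the two cases completes the proof. $\Box$
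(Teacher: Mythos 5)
Your proposal is correct, and it follows the route the paper intends but does not actually write down: the paper offers no proof of this statement, remarking only that the proofs in this subsection are obtained from the Lie-algebra arguments of \cite{AK2016A} by replacing ``Lie algebra'' by ``Leibniz algebra'', with the preceding theorem (hypotheses $\dim[\mathcal{L},\mathcal{L}]\leq n-2$ and $\textrm{Ann}(\mathcal{L})\cap[\mathcal{L},\mathcal{L}]\neq\{0\}$) playing the role that Theorem~\ref{pure} plays for Corollary~\ref{nil}. Your verification of the two hypotheses is exactly the expected one: (i) follows from the \cite{AO01} characterization of $NF_n$ as the unique nilpotent Leibniz algebra with $\dim\mathcal{L}^2=n-1$, and (ii) from $\{0\}\neq\mathcal{L}^{t-1}\subseteq\textrm{Ann}(\mathcal{L})\cap[\mathcal{L},\mathcal{L}]$ when the nilindex $t$ is at least $3$. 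The one place where you go beyond the paper is the abelian case: there $[\mathcal{L},\mathcal{L}]=\{0\}$, so hypothesis (ii) genuinely fails (unlike in the derivation setting, where hypothesis (ii) of Theorem~\ref{pure} asks only that $\textrm{Ann}(\mathcal{L})$ be nontrivial, and abelian algebras are covered), yet abelian algebras of dimension at least $2$ are nilpotent and non null-filiform, so they fall under the statement. The paper's ``replace the words'' remark glosses over this. Your explicit construction $\Delta(v)=c(\mathbb{C}v)v$, with $c$ a non-constant nowhere-zero function on the lines of $\mathbb{C}^n$, is correct: non-additivity follows by evaluating on the sum of two independent vectors lying on lines where $c$ differs, and the collinear-versus-independent case analysis shows $\Delta$ is $2$-local since every automorphism of the abelian algebra is just an invertible linear map. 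This extra case is needed for the statement to hold in the generality claimed, so your proof is complete where the paper's is merely a pointer.
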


Let \(NF_n\) be the   unique $n$-dimensional nilpotent Leibniz with
condition $\dim[\mathcal{L},\mathcal{L}]=n-1$ (see the end of
Section 3).

Let $\varphi\in Aut(NF_n)$ and
$\varphi(e_1)=\sum\limits_{i=1}^{n}\alpha_i e_i$ for some
$\alpha_i\in \mathbb{C}, \ \alpha_1\neq0$, then it is easy to
check that
\begin{eqnarray*}
\varphi(e_j)=\alpha_1^{j-1}\sum_{i=1}^{n+1-j}\alpha_ie_{i+j-1}, \
2\leq j \leq n.
\end{eqnarray*}
Using  this property, as in the case of derivations, we  conclude
that an automorphism of $NF_n$ is uniquely defined by its value on the element
$e_1$ and any 2-local automorphism of this algebra  is an
automorphism.

\subsection{Local automorphisms on simple Leibniz algebras}

\

The following result shows that the problem concerning local automorphism of simple Leibniz algebras is reduced to the similar problem for simple Lie algebras.

\begin{thm}
Let \(\nabla\) be a local automorphism of complex simple Leibniz
algebra $\mathcal{L}=\mathcal{G}\dot{+}\mathcal{I}.$  Then
\(\nabla\) is an automorphism if and only if its
\(\nabla_{\mathcal{G},\mathcal{G}}\) part is an automorphism of the Lie algebra $\mathcal{G}$.
\end{thm}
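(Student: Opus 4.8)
The plan is to establish the two implications separately: the forward one follows instantly from the structure theory of automorphisms, while the converse requires reconstructing $\nabla$ from its action on well-chosen test vectors.

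\textbf{Forward direction.} A local automorphism $\nabla$ automatically maps $\mathcal{I}$ into $\mathcal{I}$, since for $i\in\mathcal{I}$ one has $\nabla(i)=\varphi_i(i)\in\varphi_i(\mathcal{I})=\mathcal{I}$; hence $\nabla$ has a well-defined triangular decomposition $\nabla(x+i)=\nabla_{\mathcal{G},\mathcal{G}}(x)+\nabla_{\mathcal{G},\mathcal{I}}(x)+\nabla_{\mathcal{I},\mathcal{I}}(i)$, with $\nabla_{\mathcal{G},\mathcal{G}}=\textrm{pr}_{\mathcal{G}}\circ\nabla|_{\mathcal{G}}$. If moreover $\nabla\in\textrm{Aut}(\mathcal{L})$, then Lemma~\ref{decoat} applied to $\nabla$ itself gives $\nabla=\nabla_{\mathcal{G},\mathcal{G}}+\nabla_{\mathcal{I},\mathcal{I}}$ (resp. $\nabla=\nabla_{\mathcal{G},\mathcal{G}}+\omega\theta\circ\nabla_{\mathcal{G},\mathcal{G}}+\nabla_{\mathcal{I},\mathcal{I}}$) with $\nabla_{\mathcal{G},\mathcal{G}}\in\textrm{Aut}(\mathcal{G})$, so there is nothing to prove.

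\textbf{Converse: the cross term.} Assume $\psi:=\nabla_{\mathcal{G},\mathcal{G}}\in\textrm{Aut}(\mathcal{G})$ and fix, for each $x$, an automorphism $\varphi_x\in\textrm{Aut}(\mathcal{L})$ with $\varphi_x(x)=\nabla(x)$. Comparing $\mathcal{G}$-components of $\nabla(x)=\varphi_x(x)$ for $x\in\mathcal{G}$ gives $(\varphi_x)_{\mathcal{G},\mathcal{G}}(x)=\psi(x)$; combining this with the shape of automorphisms from Lemmata~\ref{decoat} and~\ref{form}, namely $(\varphi_x)_{\mathcal{G},\mathcal{I}}=\omega_x\,\theta\circ(\varphi_x)_{\mathcal{G},\mathcal{G}}$ when $\dim\mathcal{G}=\dim\mathcal{I}$ (and $(\varphi_x)_{\mathcal{G},\mathcal{I}}=0$ otherwise), the $\mathcal{I}$-component yields $\nabla_{\mathcal{G},\mathcal{I}}(x)=\omega_x\,\theta(\psi(x))$ for a scalar $\omega_x$, or $\nabla_{\mathcal{G},\mathcal{I}}=0$. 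Since $\theta\circ\psi$ is a linear bijection $\mathcal{G}\to\mathcal{I}$ and $(\theta\psi)^{-1}\circ\nabla_{\mathcal{G},\mathcal{I}}$ sends every vector to a scalar multiple of itself, it is a scalar operator; hence $\nabla_{\mathcal{G},\mathcal{I}}=\omega\,\theta\circ\psi$ for a single $\omega\in\mathbb{C}$, and $\nabla|_{\mathcal{G}}=\psi$ when $\dim\mathcal{G}\neq\dim\mathcal{I}$.

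\textbf{Converse: the $\mathcal{I}$-block.} Suppose first $\dim\mathcal{G}=\dim\mathcal{I}$. For $i\in\mathcal{I}$, apply the previous reasoning to the test element $\theta^{-1}(i)+i$: if $\varphi$ agrees with $\nabla$ there, its $\mathcal{G}$-component forces $\varphi_{\mathcal{G},\mathcal{G}}(\theta^{-1}i)=\psi(\theta^{-1}i)$, and then $\varphi_{\mathcal{I},\mathcal{I}}=\lambda_\varphi\,\theta\circ\varphi_{\mathcal{G},\mathcal{G}}\circ\theta^{-1}$ (Lemma~\ref{form}) together with $\theta\psi\theta^{-1}(i)=\theta\psi(\theta^{-1}i)$ collapses the $\mathcal{I}$-component to $\nabla_{\mathcal{I},\mathcal{I}}(i)\in\mathbb{C}\cdot\theta\psi\theta^{-1}(i)$; as $\theta\psi\theta^{-1}$ is a linear bijection of $\mathcal{I}$, the scalar-operator argument gives $\nabla_{\mathcal{I},\mathcal{I}}=\lambda\,\theta\psi\theta^{-1}$, with $\lambda\neq 0$ since $\nabla_{\mathcal{I},\mathcal{I}}(i)=\varphi_i(i)\neq 0$ for $i\neq 0$. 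Thus $\nabla=\psi+\omega\,\theta\psi+\lambda\,\theta\psi\theta^{-1}$, which by the computation in the proof of Lemma~\ref{form} is an automorphism of $\mathcal{L}$ (invertibility uses $\lambda\neq 0$). If instead $\dim\mathcal{G}\neq\dim\mathcal{I}$, so $\nabla|_{\mathcal{G}}=\psi$, I would argue that $\psi$ extends to $\mathcal{L}$: it is a local automorphism of $\mathcal{G}$ relative to $H=\{\varphi_{\mathcal{G},\mathcal{G}}:\varphi\in\textrm{Aut}(\mathcal{L})\}\supseteq\textrm{Inn}(\mathcal{G})$ and also a global automorphism, so its diagram part must preserve the isomorphism class of the $\mathcal{G}$-module $\mathcal{I}$, whence $\psi\in H$; picking $\Psi\in\textrm{Aut}(\mathcal{L})$ with $\Psi_{\mathcal{G},\mathcal{G}}=\psi$ and replacing $\nabla$ by $\Psi^{-1}\circ\nabla$ reduces to $\psi=\textrm{id}_{\mathcal{G}}$, after which test elements $h_0+i$ with $h_0\in\mathcal{H}$ strongly regular, Lemma~\ref{lm01}, and the weight-space (Larson-type) argument of Lemma~\ref{GeqI} force $\nabla_{\mathcal{I},\mathcal{I}}=\lambda\,\textrm{id}_{\mathcal{I}}$ with $\lambda\neq 0$, so $\nabla\in\textrm{Aut}(\mathcal{L})$.

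\textbf{Where the difficulty lies.} The forward implication and the computation of $\nabla_{\mathcal{G},\mathcal{I}}$ are soft; the heart of the matter is pinning down $\nabla|_{\mathcal{I}}$ from purely pointwise data. When $\dim\mathcal{G}=\dim\mathcal{I}$ the isomorphism $\theta$ supplies a rigid bridge between $\mathcal{G}$ and $\mathcal{I}$, and choosing the test vector $\theta^{-1}(i)+i$ makes all local constraints line up along $\mathbb{C}\,\theta\psi\theta^{-1}(i)$, so Schur-type rigidity finishes the job. When $\dim\mathcal{G}\neq\dim\mathcal{I}$ no such canonical bridge is available, and the cleanest route I see is to first upgrade the hypothesis $\psi\in\textrm{Aut}(\mathcal{G})$ to the assertion that $\psi$ \emph{extends} to $\mathcal{L}$ — this is precisely where one genuinely uses that $\psi$ is a global automorphism of $\mathcal{G}$ and not merely a local one — and then to reduce to $\psi=\textrm{id}$ and run a Larson-type comparison over the weight spaces of $\mathcal{I}$ as in Lemmata~\ref{GeqI}--\ref{GeqII}. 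That extension step, together with the weight-space bookkeeping, is the main technical obstacle.
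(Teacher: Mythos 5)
Your forward direction and your treatment of the case $\dim\mathcal{G}=\dim\mathcal{I}$ are correct and essentially coincide with the paper's argument: the paper runs the same comparisons over the test vectors $x_k$, $x_k+x_s$, $x_k+y_k$, $x_k+x_s+y_k+y_s$ in the bases $\{x_k\}$, $\{y_k=\theta(x_k)\}$, while you package them into the observation that a linear map sending every vector to a scalar multiple of itself is scalar; that is a legitimate and slightly cleaner reformulation. The one thing you should still record there is why $\psi+\omega\,\theta\circ\psi+\lambda\,\theta\circ\psi\circ\theta^{-1}$ with $\lambda\neq 0$ is an automorphism of $\mathcal{L}$: the proof of Lemma~\ref{form} only verifies multiplicativity for $\varphi_{\mathcal{G},\mathcal{G}}+\lambda\,\theta\circ\varphi_{\mathcal{G},\mathcal{G}}\circ\theta^{-1}$, and the cross term needs the (easy) extra check $[\theta\psi(x),\psi(y)]=\theta\psi([x,y])$ together with $[\mathcal{L},\mathcal{I}]=0$.

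The genuine gap is in the case $\dim\mathcal{G}\neq\dim\mathcal{I}$. Everything you do there rests on the claim that $\psi=\nabla_{\mathcal{G},\mathcal{G}}$ lies in $H=\{\varphi_{\mathcal{G},\mathcal{G}}:\varphi\in\textrm{Aut}(\mathcal{L})\}$, i.e.\ extends to an automorphism of $\mathcal{L}$. That is precisely the nontrivial point, and your justification ("its diagram part must preserve the isomorphism class of the $\mathcal{G}$-module $\mathcal{I}$") restates the desired conclusion rather than proving it: $H$ is in general a proper union of components of $\textrm{Aut}(\mathcal{G})$ (for $\mathcal{G}=\mathfrak{sl}_3$ and $\mathcal{I}$ the standard module, $H=\textrm{Inn}(\mathcal{G})$), and one must rule out that an automorphism $\psi$ lying in an outer component could nevertheless satisfy $\psi(x)\in Hx$ for every single $x$. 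You yourself flag this as "the main technical obstacle", so the proof is incomplete as written. The paper avoids the extension problem entirely: it normalizes $\nabla$ by one witness so that $\nabla(h_0+i_0)=h_0+i_0$, deduces from the hypothesis that $\nabla_{\mathcal{G},\mathcal{G}}$ is an automorphism of $\mathcal{G}$ fixing the strongly regular $h_0$ that it is diagonal ($e_\alpha\mapsto t_\alpha e_\alpha$, identity on $\mathcal{H}$), uses the test vectors $h_0+i_\beta$ together with $\nabla(i_0)=i_0$ and the generation of $\mathcal{I}$ from its highest weight vector to force $\nabla_{\mathcal{I},\mathcal{I}}=\textrm{id}_{\mathcal{I}}$, and finally kills the scalars $t_\alpha$ by an explicit eigenvector computation inside an irreducible $(\mathfrak{sl}_2)_\alpha$-submodule of $\mathcal{I}$. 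It is this last interaction between the $\mathcal{G}$-part and the module action on $\mathcal{I}$, absent from your sketch, that does the work your unproved extension step was meant to do; your remaining "weight-space bookkeeping" for $\nabla_{\mathcal{I},\mathcal{I}}=\lambda\,\textrm{id}_{\mathcal{I}}$ is likewise only gestured at (weight spaces of $\mathcal{I}$ may have dimension greater than one, and equality of the scalars across distinct weights requires an argument of the kind carried out in Lemma~\ref{GeqII} or Lemma~\ref{lm02}).
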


\begin{proof} The  necessity part is evident and we shall consider the sufficient part.

Case 1. \(\dim \mathcal{G}=\dim \mathcal{I}.\) Take basis's
\(\{x_1, \ldots, x_m\}\) and \(\{y_i: y_i=\theta(x_i), i\in
\overline{1, m}\}\) on~\(\mathcal{G}\) and  \(\mathcal{I},\)
respectively, as in the proof of Lemma~\ref{GeqI}.

Suppose that \(\nabla\) is  a local automorphism of
\(\mathcal{L}\) such that its \(\nabla_{\mathcal{G},\mathcal{G}}\)
part is an automorphism. Consider an automorphism
\(\psi=\nabla_{\mathcal{G},\mathcal{G}}+ \theta\circ
\nabla_{\mathcal{G},\mathcal{G}}\circ \theta^{-1}.\) Then
\(\psi^{-1} \circ \nabla\) is a local automorphism of
\(\mathcal{L}\) such that \((\psi^{-1} \circ
\nabla)_{\mathcal{G},\mathcal{G}}=\textrm{id}_\mathcal{G}.\) So,
below it suffices to consider a local automorphism \(\nabla\) such
that \(\nabla_{\mathcal{G},\mathcal{G}}=\textrm{id}_\mathcal{G}.\)

Let \(x_k\in \mathcal{G}.\) Then
\begin{eqnarray*}
\nabla(x_k)&=&
\nabla_{\mathcal{G},\mathcal{G}}(x_k)+\nabla_{\mathcal{G},\mathcal{I}}(x_k)
=x_k+\nabla_{\mathcal{G},\mathcal{I}}(x_k).
\end{eqnarray*}
Take an automorphism \(\varphi^{x_k}\) such that
\(\nabla(x_k)=\varphi^{x_k}(x_k).\) Then
\begin{eqnarray*}
\nabla(x_k)&=&\varphi^{x_k}(x_k)=\varphi^{x_k}_{\mathcal{G},\mathcal{G}}(x_k)+\omega_{x_k}\theta(\varphi^{x_k}_{\mathcal{G},\mathcal{G}}(x_k)).
\end{eqnarray*}
Comparing the last two equalities we obtain that
\(\varphi^{x_k}_{\mathcal{G},\mathcal{G}}(x_k)=x_k,\) and
therefore
\begin{eqnarray*}
\nabla(x_k)&=&\varphi^{x_k}_{\mathcal{G},\mathcal{G}}(x_k)+\omega_{x_k}\theta(\varphi^{x_k}_{\mathcal{G},\mathcal{G}}(x_k))=\\
&=& x_k+\omega_{x_k}\theta(x_k)=x_k+\omega_{x_k}y_k.
\end{eqnarray*}

Likewise  for an element  \(x=x_k+x_s\in \mathcal{G}\) we have
that
\begin{eqnarray*}
\nabla(x)&=&\varphi^{x}(x)=\varphi^{x}_{\mathcal{G},\mathcal{G}}(x_k+x_s)+
\omega_{x_k+x_s}\theta(\varphi^{x}_{\mathcal{G},\mathcal{G}}(x_k+x_s))=\\
&=&
x_k+x_s+\omega_{x_k+x_s}\theta(x_k+x_s)=x_k+x_s+\omega_{x_k+x_s}(y_k+y_s).
\end{eqnarray*}
Since
\begin{eqnarray*}
\nabla(x) &=&
\nabla(x_k+x_s)=\nabla(x_k)+\nabla(x_s)=x_k+x_s+\omega_{x_k}y_k+\omega_{x_s}y_s,
\end{eqnarray*}
we have that \(\omega_{x_k+x_s}=\omega_{x_k}=\omega_{x_s}.\) This
means that there exists  \(\omega\in \mathbb{C}\) such that
\(\nabla(x)=x+\omega \theta(x),\) i.e.,
\(\nabla_{\mathcal{G},\mathcal{I}}=\omega\theta.\)

Now take an element  \(x=x_k+y_k\in \mathcal{G}+\mathcal{I}\) and
an automorphism \(\varphi^{x}\) such that
\(\nabla(x)=\varphi^{x}(x).\) Then
\begin{eqnarray*}
\nabla(x)&=&\varphi^{x}(x)=\varphi^{x}_{\mathcal{G},\mathcal{G}}(x_k)+
\omega_{x_k}\theta(\varphi^{x}_{\mathcal{G},\mathcal{G}}(x_k))+\lambda_{x}\theta(\varphi^{x}_{\mathcal{G},\mathcal{G}}(\theta^{-1}(y_k)))=\\
&=&
x_k+\omega_{x}\theta(x_k)+\lambda_{x}y_k=x_k+(\omega_{x}+\lambda_x)y_k.
\end{eqnarray*}

Further for an element \(x=x_k+x_s+y_k+y_s\in
\mathcal{G}+\mathcal{I}\) take an automorphism \(\varphi^{x}\)
such that \(\nabla(x)=\varphi^{x}(x).\) Then
\begin{eqnarray*}
\nabla(x)&=&\varphi^{x}(x)=\varphi^{x}_{\mathcal{G},\mathcal{G}}(x_k+x_s)+
\omega_{x_k}\theta(\varphi^{x}_{\mathcal{G},\mathcal{G}}(x_k+x_s))+
\lambda_{x}\theta(\varphi^{x}_{\mathcal{G},\mathcal{G}}(\theta^{-1}(y_k+y_s)))=\\
&=&
x_k+x_s+\omega_{x}\theta(x_k+x_s)+\lambda_{x}(y_k+y_s)=x_k+x_s+(\omega_{x}+\lambda_x)(y_k+y_s).
\end{eqnarray*}
Combing the last two equalities we have  that there exists \(t\in
\mathbb{C}\) such that \(\nabla(x_k+y_s)=x_k+ty_k.\) Then
\begin{eqnarray*}
\nabla(y_k)&=&\nabla(x_k+y_k)-\nabla(x_k)=\\
&=& x_k+ty_k-(x_k+\omega y_k)=(t-\omega)y_k.
\end{eqnarray*}
This means that
\(\nabla_{\mathcal{I},\mathcal{I}}=(t-\omega)\textrm{id}_\mathcal{I}.\)
Hence
\(\nabla=\textrm{id}_\mathcal{G}+\omega\theta+(t-\omega)\textrm{id}_\mathcal{I}.\)

Case 2. \(\dim \mathcal{G}\neq\dim \mathcal{I}.\) Take an element
of the form \eqref{inol}, i.e.,
\begin{eqnarray*}
i_0=\sum_{\beta\in \Gamma}\sum_{k=1}^{s_\beta}i^{(k)}_{l_\beta}.
\end{eqnarray*}

Let   \(\varphi_{h_0+i_0}\) be an automorphism such that
\(\nabla(h_0+i_0)=\varphi_{h_0+i_0}(h_0+i_0).\) If necessary, we
can replace \(\nabla\) by \(\varphi^{-1}_{h_0+i_0}\circ\nabla,\)
and suppose that \(\nabla(h_0+i_0)=h_0+i_0.\)

Since
\[
h_0+i_0=\nabla(h_0+i_0)=\nabla_{\mathcal{G},\mathcal{G}}(h_0)+\nabla_{\mathcal{I},\mathcal{I}}(i_0),
\]
it follows that $\nabla_{\mathcal{G},\mathcal{G}}(h_0)=h_0$ and
$\nabla_{\mathcal{I},\mathcal{I}}(i_0)=i_0.$ Since
\(\nabla_{\mathcal{G},\mathcal{G}}\) is an automorphism, by
Lemma~\ref{lm01}, for every \(\alpha\in \Phi\) there exists a non
zero \(t_\alpha\in \mathbb{C}\) such that
\(\nabla_{\mathcal{G},\mathcal{G}}(e_\alpha)=t_\alpha e_\alpha,\)
\(\nabla_{\mathcal{G},\mathcal{G}}(e_{-\alpha})=t^{-1}_\alpha
e_{-\alpha}\)  and \(\nabla_{\mathcal{G},\mathcal{G}}(h)=h\) for
all \(h\in \mathcal{H}.\)

Let \(i_\beta\in \mathcal{I}_\beta.\) Then
\begin{eqnarray*}
\nabla(h_0+i_\beta)&=& \nabla(h_0)+\nabla(i_\beta)
=h_0+\nabla(i_\beta).
\end{eqnarray*}
Take an automorphism \(\varphi^{h_0+i_\beta}\) such that
\(\nabla(h_0+i_\beta)=\varphi^{h_0+i_\beta}(h_0+i_\beta).\) Then
\begin{eqnarray*}
\nabla(h_0+i_\beta)&=&\varphi^{h_0+i_\beta}(h_0+i_\beta)=\varphi^{h_0}_{\mathcal{G},\mathcal{G}}(h_0)+
\varphi^{h_0+i_\beta}_{\mathcal{I},\mathcal{I}}(i_\beta).
\end{eqnarray*}
Comparing the last two equalities we obtain that
\(\varphi^{h_0+i_\beta}_{\mathcal{G},\mathcal{G}}(h_0)=h_0,\) and
therefore \(\varphi^{h_0+i_\beta}\) acts as diagonal matrix on
\(\mathcal{L}.\) Thus
\begin{eqnarray*}
\nabla(i_\beta)&=&\varphi^{h_0+i_\beta}_{\mathcal{I},\mathcal{I}}(i_\beta)=c_\beta
i_\beta.
\end{eqnarray*}
Since \(\nabla(i_0)=i_0,\) it follows that
\(\nabla(i_\beta)=i_\beta\) for all \(\beta.\) So,
\(\nabla_{\mathcal{I},\mathcal{I}}=\textrm{id}_\mathcal{I}.\)

Let \(\alpha\in \Phi.\)  Considering \(\mathcal{I}\) as
\((\mathfrak{sl_2})_\alpha\)-module,  where
\((\mathfrak{sl_2})_\alpha\equiv\textrm{span}\{e_\alpha,
e_{-\alpha},  h_\alpha=[e_{\alpha},  e_{-\alpha}]\},\) we can find a non
trivial irreducible submodule \(\mathcal{J}_\alpha\) of
\(\mathcal{I}.\) Then \(\mathcal{J}_\alpha\) admits a basis
\(\{x_0^{\alpha}, \ldots, x_n^{\alpha}\}\) such that \cite{ROT}:
$$\begin{array}{lll}
\, [x_k^{\alpha}, e_\alpha]=x_{k+1}^{\alpha},  & k\in\{0, \ldots, n-1\}, &  \\
\, [x_k^{\alpha}, e_{-\alpha}]=- k(n + 1 - k)x_{k-1}^{\alpha}, & k\in \{1,\ldots, n\},&\\
\, [x_k^{\alpha}, h_\alpha]=(n-2k)x_k^{\alpha}, & k\in\{0,\ldots, n\}.&\\
\end{array}
$$
The matrix of the right multiplication operator
\(R_{h_\alpha+e_\alpha}\) on \(\mathcal{J}_\alpha\) has the
following form
\[
\left(%
\begin{array}{ccccccc}
  n & 0 & 0 &  \cdots & 0 & 0 & 0\\
  1 & n-2 & 0  &  \cdots & 0 & 0 & 0\\
  0 & 1 & n-4 &  \cdots & 0 & 0 & 0\\
  \vdots &  \vdots & \vdots & \vdots & \vdots & \vdots & \vdots\\
  0 & 0 & 0 &  1 & -(n-4) & 0 & 0 \\
  0 & 0 & 0 &  0 & 1      & -(n-2) & 0 \\
0 & 0 & 0 &  0 & 0 & 1 & -n \\
\end{array}%
\right).
\]
Direct computations show that \(n\) is a eigenvalue of this matrix
and we take a non zero eigenvector \(i=\sum\limits_{s=0}^n t_s
x_s^{\alpha} \in \mathcal{J}_\alpha\) corresponding to this
eigenvalue, i.e., \([i, h_\alpha+e_\alpha]=n i,\) with \(t_0\neq
0.\) For an element \(x=h_\alpha+e_\alpha+i\) choose an
automorphism \(\varphi^x\) such that \(\nabla(x)=\varphi^x(x).\)
Then
\begin{eqnarray*}
[\nabla(x), \nabla(x)]&=& [h_\alpha+t_\alpha e_\alpha+i, h+t_\alpha
e_\alpha+i]=[i, h_\alpha+t_\alpha e_\alpha]
\end{eqnarray*}
and
\begin{eqnarray*}
[\nabla(x), \nabla(x)]&=& [\varphi^x(h_\alpha+e_\alpha+i),
\varphi^x(h_\alpha+e_\alpha+i)]=\\
&=& \varphi^x([h_\alpha+e_\alpha+i, h_\alpha+e_\alpha+i])=\\
&=& \varphi^x([i, h_\alpha+e_\alpha])=n\varphi^x(i)=n \nabla(i)=n
i.
\end{eqnarray*}
The last two equalities implies that
\[
 [i, h_\alpha+t_\alpha e_\alpha]=n i= [i, h_\alpha+e_\alpha].
\]
Comparing coefficients at the basis element \(x^\alpha_1\) in the
above equality we conclude that
\[
t_\alpha t_0+(n-2)t_1=t_0+(n-2)t_1.
\] Thus \(t_\alpha=1,\) and
therefore \(\nabla(e_\alpha)=e_{\alpha}.\) So,
\(\nabla=\textrm{id}_\mathcal{L}.\) The proof is complete.
\end{proof}

\section*{Acknowledgements}

The authors are grateful to professor Kaiming Zhao  for useful discussions and
suggestions.

\end{document}